\let\OLDthebibliography\thebibliography
\renewcommand\thebibliography[1]{
  \OLDthebibliography{#1}
  \setlength{\parskip}{0pt}
  \setlength{\itemsep}{0pt plus 0.0ex}
}
\newcommand{\bea}{\begin{eqnarray}}
\newcommand{\ena}{\end{eqnarray}}
\newcommand{\beq}{\begin{equation}}
\newcommand{\enq}{\end{equation}}
\newcommand{\beas}{\begin{eqnarray*}}
\newcommand{\enas}{\end{eqnarray*}}
\newcommand{\PP}{{\mathbb{P}} }
\newcommand{\E}{{\mathbb{E}} }
\newcommand{\EE}{{\mathbb{E}} }
\def\numberlikeadb{\global\def\theequation{\thesection.\arabic{equation}}}
\newtheorem{theorem}{Theorem}[section]
\newtheorem{lemma}[theorem]{Lemma}
\newtheorem{corollary}[theorem]{Corollary}
\newtheorem{proposition}[theorem]{Proposition}
\newtheorem{remark}[theorem]{Remark}
\newcommand{\gr}[1]{{\color{black} #1}}
\newcommand{\aaa}[1]{{\color{black} #1}}
\newcommand{\gdr}[1]{{\color{black} #1}}
\begin{document}

\title{Wasserstein distance error bounds for the multivariate normal approximation of the maximum likelihood estimator}
\author{Andreas Anastasiou\footnote{Department of Mathematics and Statistics, University of Cyprus, P.O. Box: 20537, 1678, Nicosia, Cyprus, 
anastasiou.andreas@ucy.ac.cy}\:\: and Robert E. Gaunt\footnote{Department of Mathematics, The University of Manchester, Oxford Road, Manchester M13 9PL, UK, robert.gaunt@manchester.ac.uk}}

\date{} 
\maketitle

\vspace{-10mm}

%\doublespacing

\begin{abstract}
We obtain explicit $p$-Wasserstein distance error bounds between the distribution of the multi-parameter MLE and the multivariate normal distribution.  Our general bounds are given for possibly high-dimensional, independent and identically distributed random vectors.  Our general bounds are of the optimal $\mathcal{O}(n^{-1/2})$ order. Explicit numerical constants are given when $p\in(1,2]$, and in the case $p>2$ the bounds are explicit up to a constant factor that only depends on $p$. We apply our general bounds to derive Wasserstein distance error bounds for the multivariate normal approximation of the MLE in several settings; these being single-parameter exponential families, the normal distribution under canonical parametrisation, and the multivariate normal distribution under non-canonical parametrisation. In addition, we provide upper bounds with respect to the bounded Wasserstein distance when the MLE is implicitly defined.
%We apply our general bounds to derive \gr{$1$-}Wasserstein distance error bounds for the multivariate normal approximation of the MLE in several settings; these being single-parameter exponential families, the normal distribution under canonical parametrisation, and the multivariate normal distribution under non-canonical parametrisation. \gr{In each of these settings, it is also possible to apply our general bounds to obtain bounds with respect to the $p$-Wasserstein metric ($p>1$) and we give some examples to illustrate this.}
\end{abstract}

\noindent{{\bf{Keywords:}}} Maximum likelihood estimation;
multivariate normal approximation; normal approximation; Wasserstein distance; Stein's method 

\noindent{{{\bf{AMS 2010 Subject Classification:}}} Primary 60F05; 62E17; 62F10; 62F12 

\section{Introduction}

The asymptotic normality of the maximum likelihood estimator (MLE), under regularity conditions, is one of the most fundamental and well-known results in statistical theory.  However, \gr{progress has only been made very recently} on the problem of deriving error bounds for the distance between the distribution of the MLE, under general regularity conditions, and its limiting normal distribution.
%, with Kiefer \cite{k68} considering the problem to be ``\emph{terrifyingly difficult}".  
This is in part due to the fact that the MLE is in general a nonlinear statistic for which classical techniques for distributional approximation, such as Stein's method \cite{stein}, are difficult to apply directly, although, amongst other works, \cite{cs07} and \cite{pm16} have obtained optimal order Berry-Esseen-type bounds for quite broad classes of nonlinear statistics.

In recent years, however, there have been a number of contributions to the problem of quantifying the closeness of the MLE to its asymptotic normal distribution.  
%To provide context to these results, we recall the probability metrics used in these works. For $\mathbb{R}^d$-valued random variables $X$ and $Y$, consider the family of integral probability metrics $d_{\mathcal{H}}(X,Y):=\sup_{h\in\mathcal{H}}|\mathbb{E}[h(X)]-\mathbb{E}[h(Y)]|$ for some class of functions $\mathcal{H}$.  Taking
Under general regularity conditions, \cite{ar15} used Stein's method to obtain an explicit $\mathcal{O}(n^{-1/2})$ bound, where $n$ is the sample size, between the distribution of the single-parameter MLE and the normal distribution in the bounded Wasserstein metric (this and all other probability metrics mentioned in this paper will be defined in Section \ref{sec2.2met}).  In the special case that the MLE can be expressed as a suitably smooth function of a sum of independent and identically distributed (i.i.d.) observations, \cite{al15} obtained bounds that sharpen and simplify those of  \cite{ar15}.   The results of \cite{ar15} were extended by \cite{a18} to quantify the closeness between the multi-parameter MLE and its limiting multivariate normal distribution.  However, the added technical difficulties of multivariate normal approximation by Stein's method meant that these bounds were given in a smooth test function metric (we also define this metric in Section \ref{sec2.2met}) that is weaker than the bounded Wasserstein metric.  Under the requirement that the statistic of interest can be expressed as a sum of independent random elements, \cite{pm16} used the delta method to establish uniform and non-uniform Kolmogorov distance bounds on the rate of convergence to normality for various statistics, including the single-parameter MLE.  The bounds obtained were of the optimal $\mathcal{O}(n^{-1/2})$ order.  The recent paper \cite{p17} subsequently extended the results of \cite{pm16} to cover general regularity conditions and settings in which the MLE is not necessarily a function of the sum of independent random terms.  The nonuniform bounds of \cite{p17} are the only such bounds in the literature for the normal approximation of the MLE. 

In this paper, we obtain, under general regularity conditions, optimal order $\mathcal{O}(n^{-1/2})$ bounds on the distance between the distribution of the multi-parameter MLE and its limiting multivariate normal distribution, with respect to the \gr{$p$-}Wasserstein metric. \gr{A general 1-Wasserstein distance} bound appears in
%in our main result, 
Theorem \ref{Theorem_multi}, and a simpler bound for the single-parameter MLE is given in Theorem \ref{Theoremnoncan}. \gr{We provide $p$-Wasserstein distance analogues of these bounds in Theorem \ref{thmwasp}.} These results are a technical advancement over the works of \cite{ar15} and \cite{a18}, because the \gr{$1$-}Wasserstein metric is a strictly stronger metric than those used in these works\gr{, and the $p$-Wasserstein metric ($p\geq1$) is a stronger metric still (provided it is well-defined for the probability distributions under consideration).} \gr{Moreover, Wasserstein distances are} natural and widely used probability metrics that have many applications in statistics (see \cite{pz19}).  Our bounds also remove an additional constant $\epsilon$ that appears in the bounds of \cite{ar15} and \cite{a18}, %(this constant can be optimised on a case-by-case basis in applications of the bounds)
 and further comparisons between our bounds are given in Remark \ref{remcomparison}. In obtaining our bounds, we use Stein's method and in particular make use of the very recent advances in the literature on optimal (or near-optimal) order Wasserstein distance bounds for the multivariate normal approximation of sums of independent random vectors; see the recent works \cite{bonis,cfp17,fang18,f18b,gms,raic,zhai} for important contributions to this body of research.  Our results to some extent complement this literature by giving optimal order Wasserstein distance bounds for multivariate normal approximation in the much more general setting of the MLE under general regularity conditions, which is in general a nonlinear statistic.  In fact, to the best of our knowledge, this paper contains the first examples of optimal order Wasserstein distance bounds for the multivariate normal approximation of nonlinear statistics. 

The work of \cite{p17} is significant in that the bounds are given in the Kolmogorov metric, which is a technically demanding metric to work in \gr{when applying Stein's method}, and is particularly important in statistics, as bounds in this metric can be used, for example, to construct conservative confidence intervals.  It should be noted, however, that, as already mentioned, \gr{Wasserstein distances have} many applications in statistics \cite{pz19}, and, as observed by \cite{bx06}, the Wasserstein distance between probability distributions has the theoretically desirable property of taking into account not only the amounts  by which their  probabilities differ, as is the case in the Kolmogorov distance, but also where these differences take place.  For the single-parameter case, our results complement those of \cite{p17} by giving bounds in another important probability metric, and have the advantage of being explicit, whilst those of \cite{p17} are (in the case of uniform bounds) of the form $Cn^{-1/2}$, where $C$ is an unspecified constant that does not involve $n$.  For the multi-parameter MLE, one can extract explicit sub-optimal order $\mathcal{O}(n^{-1/4})$ Kolmogorov distance bounds for the multivariate normal approximation from our \gr{1-}Wasserstein distance bounds (see inequality (\ref{dkdwnor})). It should be noted that a similar procedure can be used to extract Kolmogorov distance bounds from those of \cite{a18}, although, as a consequence of the weaker metric used in that work, these are of the worse order $\mathcal{O}\big(n^{-1/8}\big)$ (see Remark \ref{2020rem}).  For the time being, to the best of our knowledge, the $\mathcal{O}(n^{-1/4})$ Kolmogorov distance bounds for the multi-parameter MLE that can be deduced from our Wasserstein distance bounds have the best dependence on $n$ in the current literature.
%, although we expect this to bettered in the coming years.

% In comparison to the work of Pinelis \cite{p17}, we concede that in spite of the desirable properties of the Wasserstein metric as described above, the Kolmogorov metric is particularly important in statistics, as bounds in this metric can be used, for example, to construct conservative confidence intervals.

%In obtaining our bounds, we use Stein's method and in particular make use of the recent advances in the literature on optimal order Wasserstein distances bounds for the multivariate normal approximation of sums of independent random vectors.

%To achieve these optimal order Wasserstein distance bounds we alter one of the regularity conditions that is used by Anastasiou and Reinert \cite{ar15} and Anastasiou \cite{a18}.  As is demonstrated by the examples given in Section \ref{sec4}, however, the regularity conditions used in this paper are not restrictive at all.  This modification in the one the regularity conditions also means that, unlike the bounds of Anastasiou and Reinert \cite{ar15} and Anastasiou \cite{a18}, our bounds do not involve an additional inconvenient constant $epsilon$ that must be 

The rest of the paper is organised as follows.  In Section \ref{sec2}, we present the setting of the paper.  This includes the notation, regularity conditions for our main results, definitions of the probability metrics used in the paper and a relationship between the \gr{1-}Wasserstein and Kolmogorov metrics, and we also recall some results from the literature on Stein's method for normal and multivariate normal approximation.  In Section \ref{sec3}, we state and prove our \gr{main results. Theorem \ref{Theorem_multi} provides} an optimal order Wasserstein distance bound on the closeness between the distribution of the multi-parameter MLE and its limiting multivariate normal distribution.  We also present a simplified bound in the univariate case \gr{(Theorem \ref{Theoremnoncan}). Theorem \ref{thmwasp} provides $p$-Wasserstein distance analogues of the bounds of Theorems \ref{Theorem_multi} and \ref{Theoremnoncan}.} %{\textcolor{red}{Furthermore, in Remark \ref{rem12} we briefly explain how one can get the aforementioned bound on the Wasserstein distance for the quantities of interest, when the MLE is not known explicitly}}.
 In Section \ref{sec4}, we apply the results of Section \ref{sec3} in the settings of single-parameter exponential families, the normal distribution under canonical parametrisation, and the multivariate normal distribution under non-canonical parametrisation. \aaa{In addition, we provide upper bounds for cases where the MLE cannot be expressed analytically \gdr{with respect to the bounded Wasserstein distance}.}  In Section \ref{sec4.56}, we carry out a  simulation study to assess the accuracy of our bounds.   Some technical proofs, examples, and calculations are postponed to Appendix \ref{appa}.

%$d_3(X,Y)=\sup_{h\in\mathcal{H}_3}|\mathbb{E}[h(X)]-\mathbb{E}[h(Y)]|$, where $X$ and $Y$ are 

%\section{Notation and regularity conditions}

\section{Setting}\label{sec2}

%\subsection{Notation}

\subsection{Regularity conditions}

The notation that is used throughout the paper is as follows. The parameter space is $\Theta \subset \mathbb{R}^d$ equipped with the Euclidean norm. 
%\gr{It will always be assumed that the dimension of the parameter space is finite.}
 Let $\boldsymbol{\theta} = (\theta_1,\theta_2, \ldots, \theta_d)^{\intercal}$ denote a parameter from the parameter space, while $\boldsymbol{\theta}_0 = \left(\theta_{0,1}, \theta_{0,2}, \ldots, \theta_{0,d}\right)^{\intercal}$ denotes the true, but unknown, value of the parameter. 
For $\boldsymbol{X}=(\boldsymbol{X}_1, \boldsymbol{X}_2, \ldots, \boldsymbol{X}_n)$ being i.i.d$.$ random vectors in $\mathbb{R}^t$, $t\in\mathbb{Z}^+$, we denote by $f(\boldsymbol{x}_i|\boldsymbol{\theta})$ the probability density (or mass) function of $\boldsymbol{X}_i$. The likelihood function is $L(\boldsymbol{\theta}; \boldsymbol{x}) = \prod_{i=1}^{n}f(\boldsymbol{x}_i|\boldsymbol{\theta})$, where $\boldsymbol{x} = (\boldsymbol{x}_1, \boldsymbol{x}_2, \ldots, \boldsymbol{x}_n)$.
%The probability density (or probability mass) function is denoted by $f(\boldsymbol{x}|\boldsymbol{\theta})$, where $\boldsymbol{x} = (\boldsymbol{x}_1, \boldsymbol{x}_2, \ldots, \boldsymbol{x}_n)$. The likelihood function is $L(\boldsymbol{\theta}; \boldsymbol{x}) = f(\boldsymbol{x}|\boldsymbol{\theta})$. 
Its natural logarithm, called the log-likelihood function, is denoted by $\ell(\boldsymbol{\theta};\boldsymbol{x})=\log L(\boldsymbol{\theta}; \boldsymbol{x})$. We shall write $\nabla=\big(\frac{\partial}{\partial \theta_1},\ldots,\frac{\partial}{\partial \theta_d}\big)^\intercal$ to denote the gradient operator with respect to the unknown parameter vector $\boldsymbol{\theta}$. A maximum likelihood estimate (not seen as a random vector) is a value in the parameter space which maximises the likelihood function. For many models, the MLE as a random vector exists and is also unique, in which case it is denoted by $\boldsymbol{\hat{\theta}}_n(\boldsymbol{X})$, the MLE for $\boldsymbol{\theta}_0$ based on the sample $\boldsymbol{X}$. A set of assumptions that ensure existence and uniqueness of the MLE are given in \cite{Makelainen}. This is known as the `regular' case. However, existence and uniqueness of the MLE cannot be taken for granted; see  \cite{Billingsley} for an example of non-uniqueness. We shall write $\mathbb{E}$ to denote the expectation with respect to $\boldsymbol{\theta}_0$, and $\mathbb{E}_{\boldsymbol{\theta}}$ to denote the expectation with respect to $\boldsymbol{\theta}$.

%For $\boldsymbol{X}_1, \boldsymbol{X}_2, \ldots, \boldsymbol{X}_n$ being i.i.d$.$ random vectors, we denote by $f(\boldsymbol{x},\boldsymbol{\theta})$ the probability density (or mass) function of $\boldsymbol{X}_i$. The likelihood function is $L(\boldsymbol{\theta}; \boldsymbol{x}) = \prod_{i=1}^{n}f(\boldsymbol{x}_i|\boldsymbol{\theta})$. 
%With the parameter space $\Theta$ being an open subset of $\mathbb{R}^d$, the asymptotic normality of the MLE holds under the following regularity conditions as expressed in \cite{Davison}:

Let us now present standard regularity conditions under which asymptotic normality of the MLE holds \cite{Davison}:
\begin{itemize}[leftmargin=0.55in]
%\item[(R.C.1)] The true value $\boldsymbol{\theta}_0$ of $\boldsymbol{\theta}$ is interior to the parameter space $\Theta\subset \mathbb{R}^d$, which is compact.
\item[(R.C.1)] The densities defined by any two different values of $\boldsymbol{\theta}$ are distinct.
\item[(R.C.2)] For all $\boldsymbol{\theta} \in \Theta$, $\EE_{\boldsymbol{\theta}}\left[\nabla\left(\ell\left(\boldsymbol{\theta};\boldsymbol{X}\right)\right)\right] = \boldsymbol{0}$.
%\item[(R.C.3)] There is a neighbourhood $N$ of $\theta_0$ within which the first three derivatives of the log-likelihood with respect to $\theta$ exist almost surely, and for $r,s,t = 1,2, \ldots, d$, we have that $\frac{1}{n}\E\left\lbrace\left|\frac{\partial^3}{\partial\theta_r\partial\theta_s\partial\theta_t}\ell(\boldsymbol{\theta}|\boldsymbol{x})\right|\right\rbrace$ is uniformly bounded for $\theta \in N$.
\item[(R.C.3)] The expected Fisher information matrix for a single random vector $I(\boldsymbol{\theta})$ is finite and positive definite. For $r,s\in\{1,2,\ldots,d\}$, its elements satisfy
\begin{equation}
\nonumber n[I(\boldsymbol{\theta})]_{rs} = \EE_{\boldsymbol{\theta}}\left[ \frac{\partial}{\partial \theta_r} \ell(\boldsymbol{\theta};\boldsymbol{X})\frac{\partial}{\partial \theta_s}\ell(\boldsymbol{\theta};\boldsymbol{X})\right] = \EE_{\boldsymbol{\theta}}\left[ -\frac{\partial^2}{\partial \theta_r \partial \theta_s} \ell(\boldsymbol{\theta};\boldsymbol{X}) \right].
\end{equation}
This condition implies that $nI(\boldsymbol{\theta})$ is the covariance matrix of $\nabla(\ell(\boldsymbol{\theta};\boldsymbol{\gr{X}}))$.
\item[(R.C.4)] For any $\boldsymbol{\theta}_0 \in \boldsymbol{\Theta}$ and for $\boldsymbol{\mathbb{X}}$ denoting the support of the data, there exists $\epsilon_0 > 0$ and functions $M_{rst}(\boldsymbol{x})$ (they can depend on $\boldsymbol{\theta}_0$), such that for $\boldsymbol{\theta} = (\theta_1, \theta_2, \ldots, \theta_d)$ and $r, s, t, \in\{1,2,\ldots,d\},$ the third order partial derivatives $\frac{\partial^3}{\partial \theta_r \partial \theta_s \partial \theta_t}\ell(\boldsymbol{\theta};\boldsymbol{x})$ exist almost surely in the neighbourhood $|\theta_j - \theta_{0,j}| < \epsilon_0$, $j=1,2,\ldots,d$, and satisfy 
\begin{equation}
\nonumber \left|\frac{\partial^3}{\partial \theta_r \partial \theta_s \partial \theta_t}\ell(\boldsymbol{\theta};\boldsymbol{x})\right| \leq M_{rst}(\boldsymbol{x}), \; \forall \boldsymbol{x} \in \boldsymbol{\mathbb{X}},\; \left|\theta_j - \theta_{0,j}\right| < \epsilon_0,\; j=1,2,\ldots,d,
\end{equation}
with $\EE[M_{rst}(\boldsymbol{X})] < \infty$. 
\end{itemize}
In addition to these regularity conditions, \cite{Davison} assumes that the true value $\boldsymbol{\theta}_0$ of $\boldsymbol{\theta}$ is interior to the parameter space $\Theta\subset \mathbb{R}^d$, which is compact. Throughout this paper, we shall instead assume that the parameter space $\Theta\subset \mathbb{R}^d$ is open.
%The conditions very closely resemble the regularity conditions given in \cite{Davison}, although in (R.C.1) it is assumed that $\Theta$ is open subset of $\mathbb{R}^d$, whilst \cite{Davison} assumes that $\Theta$ is compact subset of $\mathbb{R}^d$.  
 Conditions (R.C.1), (R.C.3) and (R.C.4) are stated explicitly on page 118 of \cite{Davison}.  We have expressed (R.C.4) slightly differently to how it is stated in \cite{Davison}, so that our presentation is consistent with that from the book \cite{cb02} and a similar regularity condition  (R.C.4') of \cite{a18}, which are both referred to in our paper. Condition (R.C.2) is not stated on page 118 of \cite{Davison}, but is crucial to the proof and is implied by equation (4.32) on page 124 of \cite{Davison} in which an interchange in the order of integration and differentiation is assumed. 
 
 The asymptotic normality of the MLE was first discussed by \cite{f25}. Here, with the above regularity conditions, we present the following statement of the asymptotic normality of the multi-parameter MLE for i.i.d$.$ random vectors; for the independent but not necessarily identically distributed case see \cite{Hoadley}.

\begin{theorem}[Davison \cite{Davison}]\label{Theorem_asymptotic_MLE_i.n.i.d}
Let $\boldsymbol{X}_1, \boldsymbol{X}_2, \ldots, \boldsymbol{X}_n$ be i.i.d$.$ random vectors with probability density (or mass) functions $f(\boldsymbol{x}_i|\boldsymbol{\theta})$, where $\boldsymbol{\theta} \in \Theta \subset \mathbb{R}^d$, and $\Theta$ is compact. Assume that the MLE $\hat{\boldsymbol{\theta}}_n(\boldsymbol{X})$ exists and is unique and that the regularity conditions (R.C.1)--(R.C.4) hold. Let $\boldsymbol{Z} \sim \mathrm{MVN}\left(\boldsymbol{0},I_{d}\right)$, where $\boldsymbol{0}$ is the $d \times 1$ zero vector and $I_{d}$ is the $d \times d$ identity matrix.  Then
\begin{equation*}
%\label{i.n.i.d_asymptotic_result}
\sqrt{n}\left[I(\boldsymbol{\theta}_0)\right]^{1/2}\big(\hat{\boldsymbol{\theta}}_n(\boldsymbol{X}) - \boldsymbol{\theta}_0\big) \xrightarrow[{n \to \infty}]{{\rm d}} \boldsymbol{Z}.
\end{equation*}
\end{theorem}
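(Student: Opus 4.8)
The plan is to follow the classical Taylor-expansion argument for the MLE, with the regularity conditions playing their customary roles. The first and most delicate step is to establish consistency, $\hat{\boldsymbol{\theta}}_n(\boldsymbol{X}) \xrightarrow{\mathrm{p}} \boldsymbol{\theta}_0$. This is where compactness of $\Theta$ and the identifiability condition (R.C.1) enter. I would use that the normalised log-likelihood $\frac{1}{n}\ell(\boldsymbol{\theta};\boldsymbol{X})$ converges (by the law of large numbers) to $\EE_{\boldsymbol{\theta}_0}[\log f(\boldsymbol{X}_1|\boldsymbol{\theta})]$, which by Jensen's inequality (the information inequality) together with (R.C.1) is uniquely maximised at $\boldsymbol{\theta}_0$; promoting this to convergence of the maximiser $\hat{\boldsymbol{\theta}}_n$ requires a uniform law of large numbers over the compact $\Theta$. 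Once consistency holds, since $\boldsymbol{\theta}_0$ is interior to $\Theta$, with probability tending to one $\hat{\boldsymbol{\theta}}_n$ lies in the interior and hence solves the score equation $\nabla\ell(\hat{\boldsymbol{\theta}}_n;\boldsymbol{X}) = \boldsymbol{0}$.

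The second step is a componentwise Taylor expansion of the score about $\boldsymbol{\theta}_0$. For each $r$,
\begin{equation*}
0 = \frac{\partial}{\partial\theta_r}\ell(\boldsymbol{\theta}_0;\boldsymbol{X}) + \sum_{s=1}^d \frac{\partial^2}{\partial\theta_r\partial\theta_s}\ell(\boldsymbol{\theta}_0;\boldsymbol{X})(\hat{\theta}_{n,s}-\theta_{0,s}) + \frac{1}{2}\sum_{s,t=1}^d \frac{\partial^3}{\partial\theta_r\partial\theta_s\partial\theta_t}\ell(\boldsymbol{\theta}_r^\ast;\boldsymbol{X})(\hat{\theta}_{n,s}-\theta_{0,s})(\hat{\theta}_{n,t}-\theta_{0,t}),
\end{equation*}
where $\boldsymbol{\theta}_r^\ast$ lies on the segment joining $\boldsymbol{\theta}_0$ and $\hat{\boldsymbol{\theta}}_n$; by consistency it eventually lies in the neighbourhood $|\theta_j-\theta_{0,j}|<\epsilon_0$ of (R.C.4). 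Factoring a single factor $(\hat{\theta}_{n,s}-\theta_{0,s})$ out of the quadratic remainder lets me write the system as $(-\nabla^2\ell(\boldsymbol{\theta}_0;\boldsymbol{X}) - \boldsymbol{B}_n)(\hat{\boldsymbol{\theta}}_n - \boldsymbol{\theta}_0) = \nabla\ell(\boldsymbol{\theta}_0;\boldsymbol{X})$, where $\nabla^2\ell$ is the Hessian and $\boldsymbol{B}_n$ collects the third-derivative remainder. Dividing by $n$ and multiplying by $\sqrt{n}$ then gives
\begin{equation*}
\Big(-\tfrac{1}{n}\nabla^2\ell(\boldsymbol{\theta}_0;\boldsymbol{X}) - \tfrac{1}{n}\boldsymbol{B}_n\Big)\sqrt{n}(\hat{\boldsymbol{\theta}}_n - \boldsymbol{\theta}_0) = \tfrac{1}{\sqrt{n}}\nabla\ell(\boldsymbol{\theta}_0;\boldsymbol{X}).
\end{equation*}

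The third step assembles three limiting ingredients, all for sums of i.i.d.\ terms. By (R.C.2) the score has mean zero, and by (R.C.3) its covariance is $nI(\boldsymbol{\theta}_0)$, so the multivariate central limit theorem yields $\frac{1}{\sqrt{n}}\nabla\ell(\boldsymbol{\theta}_0;\boldsymbol{X}) \xrightarrow{\mathrm{d}} \boldsymbol{W} \sim \mathrm{MVN}(\boldsymbol{0}, I(\boldsymbol{\theta}_0))$. By the second identity in (R.C.3) and the weak law of large numbers, $-\frac{1}{n}\nabla^2\ell(\boldsymbol{\theta}_0;\boldsymbol{X}) \xrightarrow{\mathrm{p}} I(\boldsymbol{\theta}_0)$. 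Finally, (R.C.4) bounds the third derivatives so that $\frac{1}{n}\sum_{i=1}^n M_{rst}(\boldsymbol{X}_i) = O_{\mathrm{p}}(1)$, while the extra factor $(\hat{\theta}_{n,s}-\theta_{0,s})$ is $o_{\mathrm{p}}(1)$ by consistency, giving $\frac{1}{n}\boldsymbol{B}_n \xrightarrow{\mathrm{p}} \boldsymbol{0}$. Hence the matrix on the left converges in probability to the positive-definite $I(\boldsymbol{\theta}_0)$, its inverse converges to $I(\boldsymbol{\theta}_0)^{-1}$ by the continuous mapping theorem, and Slutsky's theorem delivers $\sqrt{n}(\hat{\boldsymbol{\theta}}_n - \boldsymbol{\theta}_0) \xrightarrow{\mathrm{d}} I(\boldsymbol{\theta}_0)^{-1}\boldsymbol{W} \sim \mathrm{MVN}(\boldsymbol{0}, I(\boldsymbol{\theta}_0)^{-1})$. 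Multiplying through by the fixed matrix $\sqrt{n}[I(\boldsymbol{\theta}_0)]^{1/2}$ and computing the limiting covariance $[I(\boldsymbol{\theta}_0)]^{1/2}I(\boldsymbol{\theta}_0)^{-1}[I(\boldsymbol{\theta}_0)]^{1/2} = I_d$ completes the proof. The step I expect to be the main obstacle is the consistency argument: once $\hat{\boldsymbol{\theta}}_n \xrightarrow{\mathrm{p}} \boldsymbol{\theta}_0$ is known, the remaining steps are essentially mechanical and use only the pointwise conditions (R.C.2)--(R.C.4), whereas ensuring that the \emph{maximiser} (rather than merely some root of the score) converges genuinely requires the global identifiability (R.C.1), compactness to prevent the maximiser escaping, and uniformity of the law of large numbers over $\Theta$.
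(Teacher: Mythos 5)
The paper does not prove this theorem itself --- it is stated as a classical result with a citation to Davison --- and your outline is precisely the standard argument underlying that citation: Wald-type consistency from compactness, identifiability (R.C.1) and a uniform law of large numbers, then a componentwise Taylor expansion of the score equation, the multivariate CLT for the score (R.C.2)--(R.C.3), the weak law for the Hessian, control of the third-derivative remainder via (R.C.4) and consistency, and finally Slutsky's theorem. Your proposal is correct in outline and matches this approach, including the two genuinely delicate points you rightly flag: the consistency step (where a uniform LLN implicitly requires an integrable envelope beyond the conditions as literally stated, a gap also glossed over in the textbook treatment) and the use of separate intermediate points $\boldsymbol{\theta}_r^\ast$ for each component of the score, since no exact vector-valued mean value theorem exists.
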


A quantitative version of Theorem \ref{Theorem_asymptotic_MLE_i.n.i.d} was obtained by \cite{a18} 
(in the i.i.d$.$ setting) 
under slightly stronger regularity conditions, these being (R.C.1)--(R.C.3) and the following condition (R.C.4').  
%\begin{itemize} [leftmargin=0.55in]
%\item [(R.C.1')] The parameter space $\Theta$ is an open subset of $\mathbb{R}^d$.
%\end{itemize}
Before presenting this condition, we introduce some notation.  Let the subscript $(m)$ denote an index for which the quantity $|\hat{\theta}_n(\boldsymbol{x})_{(m)} - \theta_{0,(m)}|$ is the largest among the $d$ components:
\begin{align}
%\label{(m)}
\nonumber & (m) \in \left\lbrace 1,\ldots,d\right\rbrace\;{\rm is\;such\;that\;} |\hat{\theta}_n(\boldsymbol{x})_{(m)} - \theta_{0,(m)}| \geq |\hat{\theta}_n(\boldsymbol{x})_j - \theta_{0,j}|,\: \forall j \in \left\lbrace 1,\ldots, d \right\rbrace.
\end{align}
Let
\begin{equation}\label{cm9}Q_{(m)}=Q_{(m)}(\boldsymbol{X},\boldsymbol{\theta}_0) := \hat{\theta}_n(\boldsymbol{X})_{(m)} - \theta_{0,(m)}.
\end{equation}

\begin{itemize} [leftmargin=0.55in]
\item [(R.C.4')] The log-likelihood $\ell(\boldsymbol{\theta};\boldsymbol{x})$ is three times differentiable with respect to the unknown vector parameter $\boldsymbol{\theta}$ and the third order partial derivatives are continuous in $\boldsymbol{\theta}$. In addition, for any $\boldsymbol{\theta}_0 \in \Theta$ there exists $0<\epsilon=\epsilon(\boldsymbol{\theta}_0)$ and functions $M_{kjl}(\boldsymbol{x}),\;\forall k,j,l \in \left\lbrace 1,2,\ldots,d \right\rbrace$, such that $\big|\frac{\partial^3}{\partial \theta_{k}\partial \theta_{j}\partial \theta_{l}}\ell(\boldsymbol{\theta},\boldsymbol{x})\big| \leq M_{kjl}(\boldsymbol{x})$ for all $\boldsymbol{\theta} \in \Theta$ with $|\theta_j - \theta_{0,j}| < \epsilon$, $\forall j \in \left\lbrace 1,2,\ldots,d\right\rbrace$. Also, for $Q_{(m)}$ as in \eqref{cm9}, assume that $\EE[\left(M_{kjl}(\boldsymbol{X})\right)^2\,|\,|Q_{(m)}| < \epsilon ] < \infty$.
\end{itemize}

\gr{In Theorems \ref{Theorem_multi} and \ref{thmwasp}}, we shall work with the same regularity conditions as \cite{a18}, but with (R.C.4') replaced by the following condition \gr{(R.C.4''($p$))}. 
%A comparison between conditions (N9) and (N9') is given in Remark \ref{}.  
Before stating condition \gr{(R.C.4''($p$))}, we introduce some terminology.  We say that $M(\boldsymbol{\theta}; \boldsymbol{x})$ is monotonic in the multivariate context if for all fixed $\tilde{\theta}_1, \tilde{\theta}_2, \ldots, \tilde{\theta}_d$ and $\boldsymbol{x}$ we have that, for each $s \in \left\lbrace 1,2,\ldots, d\right\rbrace$,
\begin{equation}
\label{monotonicity}
\theta_s \rightarrow M(\tilde{\theta}_1, \tilde{\theta}_2, \ldots, \tilde{\theta}_{s-1}, \theta_s, \tilde{\theta}_{s+1}, \ldots, \tilde{\theta}_d;\boldsymbol{x})
\end{equation}
is a monotonic function.

\begin{itemize}[leftmargin=0.82in] 
\item [\gr{(R.C.4''($p$))}] All third order partial derivatives of the log-likelihood $\ell(\boldsymbol{\theta};\boldsymbol{x})$ with respect to the unknown vector parameter $\boldsymbol{\theta}$ exist. Also, for any $\boldsymbol{\theta} \in \boldsymbol{\Theta}$ and for $\boldsymbol{\mathbb{X}}$ denoting the support of the data, we assume that for any $j, l, q \in \left\lbrace 1,2,\ldots, d\right\rbrace$ there exists a function $M_{qlj}(\boldsymbol{\theta};\boldsymbol{x})$, which is monotonic in the sense defined in (\ref{monotonicity}), such that
\begin{equation}
\nonumber \left|\frac{\partial^3}{\partial\theta_q\partial\theta_l\partial\theta_j}\ell(\boldsymbol{\theta};\boldsymbol{x})\right| \leq M_{qlj}(\boldsymbol{\theta};\boldsymbol{x}), \quad \forall \boldsymbol{x} \in \boldsymbol{\mathbb{X}},
\end{equation}
and
\begin{equation}\label{intconrc}
\mathrm{max}_{\substack{\tilde{\theta}_m \in \left\lbrace\hat{\theta}_n(\boldsymbol{X})_m, \theta_{0,m}\right\rbrace\\m \in \left\lbrace 1,2,\ldots, d\right\rbrace}}\E\big[\big|(\hat{\theta}_n(\boldsymbol{X})_{l} - \theta_{0,l})(\hat{\theta}_n(\boldsymbol{X})_{q} - \theta_{0,q})M_{qlj}(\tilde{\boldsymbol{\theta}}; \boldsymbol{X})\big|^p\big] < \infty.
\end{equation}
In the univariate $d=1$ case we drop the subscripts and write $M(\boldsymbol{\theta};\boldsymbol{x})$.
\end{itemize}

\gr{We include reference to the variable $p$ in the name of our condition (R.C.4''($p$)) to emphasis the fact that the integrability condition (\ref{intconrc}) depends on $p$, the order of the Wasserstein distance under consideration.  In the case $p=1$, corresponding to the classical $1$-Wasserstein distance, we shall simply write (R.C.4'').} 
%\aaa{Robert, wouldn't it be more easily presented if we just use (R.C.4''(1)) instead?}

\begin{remark}\gr{For brevity, in this remark we discuss the condition (R.C.4''); similar comments apply to the more general condition (R.C.4''($p$)).} 
%The regularity condition (R.C.4'') is new to the literature, so we provide a brief discussion here.
%\aaa{I find the second part of the previous sentence a bit redundant.} 
The motivation for introducing (R.C.4'') is that in the proof of Theorem \ref{Theorem_multi} it allows us to bound one of the remainder terms in the \gr{$1$-}Wasserstein metric, which would not be possible if instead working with (R.C.4) or (R.C.4').  Conditions (R.C.4), (R.C.4') and (R.C.4'') each require all third order partial derivatives of $\ell(\boldsymbol{\theta};\boldsymbol{x})$ to exist.  Each condition then also involves an integrability condition involving a function that dominates the absolute value of these partial derivatives in a certain way. For a given MLE, verifying the integrability conditions in (R.C.4') and (R.C.4'') each have extra difficulty compared to (R.C.4): (R.C.4') involves a conditional expectation, whilst for (R.C.4'') the expectations in  (\ref{intconrc}) involve the MLE. In Section \ref{sec4}, we give some examples in which the MLE takes a relatively simple form, for which the verification of (R.C.4'') follows from elementary calculations, and is simpler to work with than the integrability condition involving conditional expectations in (R.C.4').  For complicated MLEs it inevitably becomes more involved to verify (R.C.4''). In Appendix \ref{appig}, we give an illustration of how (R.C.4'') can be verified for more \gr{complicated} MLEs using the example of the inverse gamma distribution. A comparison between (R.C.4') and (R.C.4'') in the context of obtaining error bounds for the distance between the distribution of the MLE and the multivariate normal distribution is given in Remark \ref{remcomparison}.
\end{remark}

% We need to comment on our definition of monotonicity which is used in (R.C.3) above. We say that the function $M_{kjl}(\boldsymbol{\theta}; \boldsymbol{x})$ is monotonous under a multivariate context when for all fixed $\tilde{\theta}_1, \tilde{\theta}_2, \ldots, \tilde{\theta}_d$, we have that for each $s \in \left\lbrace 1,2,\ldots, d\right\rbrace$
%\begin{equation}
%\label{monotonicitycc}
%M_{kjl,\theta_s}: \mathbb{R} \rightarrow M_{kjl}(\tilde{\theta}_1, \tilde{\theta}_2, \ldots, \tilde{\theta}_{s-1}, \theta_s, \tilde{\theta}_{s+1}, \ldots, \tilde{\theta}_d;\boldsymbol{x})
%\end{equation}
%is a monotonous function.

%\begin{remark} *** comparison between (N9) and (N9') goes here ***
%\end{remark}

In the case of univariate i.i.d$.$ random variables we work with (R.C.4'') and the following simpler regularity conditions:
\begin{itemize}
\item[\gr{(R1)}] The densities defined by any two different values of $\theta$ are distinct.
\item[\gr{(R2)}] 
The density $f(x|\theta)$ is three times differentiable with respect to $\theta$, the third derivative is continuous in $\theta$, and $\int f(x|\theta)\,\mathrm{d}x$ can be differentiated three times under the integral sign.
%\item[(R3)] for any $\theta \in \Theta$, we assume that $\left|\ell^{(3)}(\theta;\boldsymbol{x})\right|$ is bounded by a function $M(\theta, \boldsymbol{x})$ which is monotonous in terms of $\theta$ and $\sup_{\theta\in\left\lbrace \theta_0,\hat{\theta}_n(\boldsymbol{X})\right\rbrace}\mathbb{E}\left|M(\theta,\boldsymbol{X})\right| < \infty$
\item[\gr{(R3)}]  $i(\theta_0) \neq 0$, where $i(\theta)$ is the expected Fisher information for one random variable.
\end{itemize}

These regularity conditions are the same as those used in \cite{cb02} and \cite{ar15} with the exception that (R.C.4'') is replaced by a univariate version of (R.C.4) and (R.C.4'), respectively.

%Notice that the condition (R3) above replaces condition (R3) used in \cite{Anastasiou_Reinert17}.  

%\begin{theorem}
%\label{MLEt}
%Let $X_1, X_2, \ldots, X_n$ be i.i.d. random variables with probability density (or mass) function $f(x_i|\theta)$, where $\theta$ is the scalar parameter. Assume that the MLE exists, it is unique and (R1)-(R4) are satisfied. Then for $Z \sim {\rm N}(0,1)$
%\begin{equation}
%\label{scoredistribution}
%(a)\;\;\frac{1}{\sqrt{n}}l'(\theta_0;\boldsymbol{X}) \xrightarrow[{n \to \infty}]{{\rm d}} \sqrt{i(\theta_0)}Z,\qquad (b)\;\;\sqrt{n\:i(\theta_0)}(\hat{\theta}_n(\boldsymbol{X}) - \theta_0) \xrightarrow[{n \to \infty}]{{\rm d}} Z.
%\end{equation}
%\end{theorem}

\subsection{Probability metrics}\label{sec2.2met}

\gr{Let $\boldsymbol{X}$ and $\boldsymbol{Y}$ be $\mathbb{R}^d$-valued random vectors. Fix $p\geq1$ and suppose that $\mathbb{E}[|\boldsymbol{X}|^p]<\infty$ and $\mathbb{E}[|\boldsymbol{Y}|^p]<\infty$, where $|\cdot|$ denotes the usual Euclidean norm. Then the $p$-Wasserstein distance between the distributions of $\boldsymbol{X}$ and $\boldsymbol{Y}$ is defined by
\begin{equation}\label{pwasdefn}d_{\mathrm{W}_p}(\boldsymbol{X},\boldsymbol{Y})=\big(\inf\mathbb{E}[|\boldsymbol{X}'-\boldsymbol{Y}'|^p]\big)^{1/p},
\end{equation}
where the infimum is taken over all joint distributions of $\boldsymbol{X}'$ and $\boldsymbol{Y}'$ that have the same law as $\boldsymbol{X}$ and $\boldsymbol{Y}$, respectively.  In the case $p=1$, corresponding to the 1-Wasserstein distance, we shall drop the subscript 1 and write $d_{\mathrm{W}}$. The infimum in (\ref{pwasdefn}) is actually a minimum in that there exists a pair of jointly distributed random variables $(\boldsymbol{X}^*,\boldsymbol{Y}^*)$ with $\mathcal{L}(\boldsymbol{X}^*)=\mathcal{L}(\boldsymbol{X})$ and $\mathcal{L}(\boldsymbol{Y}^*)=\mathcal{L}(\boldsymbol{Y})$ such that
\[d_{\mathrm{W}_p}(\boldsymbol{X},\boldsymbol{Y})=\big(\mathbb{E}[|\boldsymbol{X}^*-\boldsymbol{Y}^*|^p]\big)^{1/p}\]
(see Chapter 6 of \cite{v09} and Lemma 1 of \cite{mr18}).   By H\"older's inequality, it follows that, if $1\leq p<q$, then \begin{equation}\label{pqpq}d_{\mathrm{W}_p}(\boldsymbol{X},\boldsymbol{Y})\leq d_{\mathrm{W}_q}(\boldsymbol{X},\boldsymbol{Y})\end{equation}
 for all $\boldsymbol{X}$ and $\boldsymbol{Y}$  such that $\mathbb{E}[|\boldsymbol{X}|^q]<\infty$ and $\mathbb{E}[|\boldsymbol{Y}|^q]<\infty$ (see again Chapter 6 of \cite{v09} and Lemma 1 of \cite{mr18}).}

\gr{The 1-Wasserstein metric and several other} probability metrics used in this paper can be conveniently expressed as integral probability metrics.  For $\mathbb{R}^d$-valued random vectors $\boldsymbol{X}$ and $\boldsymbol{Y}$, integral probability metrics are of the form 
\begin{equation}\label{dhdef}d_{\mathcal{H}}(\boldsymbol{X},\boldsymbol{Y}):=\sup_{h\in\mathcal{H}}|\mathbb{E}[h(\boldsymbol{X})]-\mathbb{E}[h(\boldsymbol{Y})]|
\end{equation}
for some class of functions $\mathcal{H}$. At this stage, we introduce some notation.  
For vectors $\boldsymbol{a}=(a_1,\ldots,a_d)\in\mathbb{R}^d$ and $\boldsymbol{b}=(b_1,\ldots,b_d)\in\mathbb{R}^d$, we write $\boldsymbol{a}\leq\boldsymbol{b}$ provided $a_i\leq b_i$ for $i=1,\ldots,d$. 
For a three times differentiable function $h:\mathbb{R}^d\rightarrow\mathbb{R}$ (denoted by $h\in C_b^3(\mathbb{R}^d)$), we abbreviate $|h|_1 :=\mathrm{max}_i\big\|\frac{\partial}{\partial x_i}h\big\|$, $|h|_2 := \mathrm{max}_{i,j}\big\|\frac{\partial^2}{\partial x_i\partial x_j}h\big\|$ and $|h|_3 := \mathrm{max}_{i,j,k}\big\|\frac{\partial^3}{\partial x_i\partial x_j\partial x_k}h\big\|$, provided these quantities are finite. Here (and elsewhere) $\|\cdot\|:=\|\cdot\|_\infty$ denotes the usual supremum norm of a real-valued function.  For a Lipschitz function $h:\mathbb{R}^d\rightarrow\mathbb{R}$ we denote
\begin{equation*}\|h\|_{\mathrm{Lip}}=\sup_{\boldsymbol{x}\not=\boldsymbol{y}}\frac{|h(\boldsymbol{x})-h(\boldsymbol{y})|}{|\boldsymbol{x}-\boldsymbol{y}|}.
\end{equation*}
%For $j \in \left\lbrace 1,2,3 \right\rbrace$, let
%\begin{equation}
%\label{class_multi}
%\hspace{-0.02in}H \hspace{-0.03in}= \hspace{-0.03in}\left\lbrace h\hspace{-0.03in}:\hspace{-0.03in}\mathbb{R}^d \rightarrow \mathbb{R}\hspace{-0.03in}:\hspace{-0.02in}h {\rm\:is\:three\:times\:differentiable\:with\:bounded\:} \|h\|, \|h\|_j\right\rbrace
%\end{equation}
With this notation in place, taking 
\begin{align}
\label{class_functions}
\mathcal{H}_{\mathrm{K}}&=\{\mathbf{1}(\cdot\leq \boldsymbol{z})\,|\,\boldsymbol{z}\in\mathbb{R}^d\}, \\
\mathcal{H}_{\mathrm{W}}&=\{h:\mathbb{R}^d\rightarrow\mathbb{R}\,|\,\text{$h$ is Lipschitz, $\|h\|_{\mathrm{Lip}}\leq1$}\}, \\
\mathcal{H}_{\mathrm{bW}}&=\{h:\gdr{\mathbb{R}^d}\rightarrow\mathbb{R}\,|\,\text{$h$ is Lipschitz, $\|h\|\leq1$ and $\|h\|_{\mathrm{Lip}}\leq1$}\}, \\
\mathcal{H}_{1,2}&=\{h:\mathbb{R}^d\rightarrow\mathbb{R}\,|\,\text{$h\in C^2(\mathbb{R}^d)$ with $|h|_j\leq1$, $j=1,2$}\}, \\
\mathcal{H}_{0,1,2,3}&=\{h:\mathbb{R}^d\rightarrow\mathbb{R}\,|\,\text{$h\in C^3(\mathbb{R}^d)$ with $\|h\|\leq1$ and $|h|_j\leq1$, $j=1,2,3$}\}
\end{align}
in (\ref{dhdef}) gives the Kolmogorov, \gr{1-}Wasserstein and bounded Wasserstein distances, which we denote by $d_{\mathrm{K}}$, $d_{\mathrm{W}}$ and $d_{\mathrm{bW}}$, respectively, as well as smooth test function metrics, which we denote by $d_{1,2}$ and $d_{0,1,2,3}$.  
%(We only need to define the bounded Wasserstein distance for real-valued random variables.)  
In all the above notation, we supress the dependence on the dimension $d$.  Of the works mentioned in the Introduction, the results of \cite{p17} are given in the Kolmogorov metric, \cite{al15} and \cite{ar15} work in the bounded Wasserstein metric, and \cite{a18} works in the smooth test function $d_{0,1,2,3}$ metric.  It is evident that $d_{\mathrm{bW}}$ and $d_{0,1,2,3}$ are weaker than the $d_{\mathrm{W}}$ metric.

We now note the following important relations between the Kolmogorov metric and the \gr{1-}Wasserstein and bounded Wasserstein metrics, respectively. Let $Y$ be any real-valued random variable and $Z\sim {\rm N}(0,1)$.  Then by \cite[Proposition 1.2]{ross} (see also \cite[Theorem 3.3]{chen}) and \cite[Proposition 2.4]{pike}, we have that 
\begin{align}\label{2020a}d_{\mathrm{K}}(Y,Z)&\leq \bigg(\frac{2}{\pi}\bigg)^{1/4}\sqrt{d_{\mathrm{W}}(Y,Z)},\\
\label{2020b} d_{\mathrm{K}}(Y,Z)&\leq \bigg(1+\frac{1}{2\sqrt{2\pi}}\bigg)\sqrt{d_{\mathrm{bW}}(Y,Z)}.
\end{align}
These bounds in terms of $d_{\mathrm{W}}(Y,Z)$ and $d_{\mathrm{bW}}(Y,Z)$, respectively, are best possible up to a constant factor \cite[p$.$ 1026]{pm16}.  Hence, our forthcoming $\mathcal{O}(n^{-1/2})$ \gr{1-}Wasserstein distance bounds for the asymptotic normality of the single-parameter MLE and $\mathcal{O}(n^{-1/2})$ bounded Wasserstein distance bounds both yield $\mathcal{O}(n^{-1/4})$ Kolmogorov distance bounds via (\ref{2020a}) and (\ref{2020b}), respectively. \gr{As $d_{\mathrm{W}}\leq d_{\mathrm{W}_p}$ for $p>1$,  bounds given with respect to the $p$-Wasserstein distance similarly imply such bounds.}

For the multi-parameter case, the following generalisation of (\ref{2020a}) due to \cite{k19} is available. Let $\boldsymbol{Z}\sim \mathrm{MVN}(\mathbf{0},I_d)$, $d\geq1$.  Then, for any $\mathbb{R}^d$-valued random vector $\boldsymbol{Y}$,
\begin{equation}\label{dkdwnor}d_{\mathrm{K}}(\boldsymbol{Y},\boldsymbol{Z})\leq \sqrt{2(\sqrt{2\log d} +2)}\sqrt{d_{\mathrm{W}}(\boldsymbol{Y},\boldsymbol{Z})}.
\end{equation}
A similar bound with the slightly bigger multiplicative constant of $3(\log(d+1))^{1/4}$ had previously been obtained by \cite{app16}. For an analogous relationship between the \gr{1-}Wasserstein and convex distances in $\mathbb{R}^d$ see \cite{npy20}. 

%For the multi-parameter case we can obtain Kolmogorov distance bounds from our Wasserstein distance bounds from the following proposition, which exactly resembles the univariate bound (\ref{2020a}).  For an analogous relationship between the Wasserstein and convex distances in $\mathbb{R}^d$ see \cite{npy20}.  The proof of inequality (\ref{dkdwnor}) in the proposition is a simple multivariate generalisation of the argument used in the proof of Proposition 1.2 of \cite{ross} and the assertion regarding the optimality of the bounds is also a multivariate generalisation of the argument used by \cite{pm16} to prove the optimality of the bounds (\ref{2020a}) and (\ref{2020b}).  The proof is given in Appendix \ref{appakolw}.

\begin{comment}
\begin{proposition}\label{dkdw}Let $\boldsymbol{Z}\sim \mathrm{MVN}(\mathbf{0},I_d)$, $d\geq1$.  Then, for any $\mathbb{R}^d$-valued random variable $\boldsymbol{Y}$,
\begin{equation}\label{dkdwnor}d_{\mathrm{K}}(\boldsymbol{Y},\boldsymbol{Z})\leq \bigg(\frac{2}{\pi}\bigg)^{1/4}\sqrt{d_{\mathrm{W}}(\boldsymbol{Y},\boldsymbol{Z})}.
\end{equation}
The bound (\ref{dkdwnor}) in terms of $d_{\mathrm{W}}(\boldsymbol{Y},\boldsymbol{Z})$ is best possible, up to a constant factor.
\end{proposition}
\end{comment}

\begin{remark}\label{2020rem}In the univariate case, the same argument to that used in the proof of Corollary 4.2 of \cite{gpr17} can be used to show that there exists a universal constant $C$ (which can be found explicitly) such that $d_{\mathrm{K}}(Y,Z)\leq C\big(d_{0,1,2,3}(Y,Z)\big)^{1/4}$. Using the approach of \cite{app16} with a multivariate analogue of the smoothing function of \cite{gpr17} would also lead to a bound of the form $d_{\mathrm{K}}(\boldsymbol{Y},\boldsymbol{Z})\leq C\big(d_{0,1,2,3}(\boldsymbol{Y},\boldsymbol{Z})\big)^{1/4}$, for $d\geq1$. Consequently, the $\mathcal{O}(n^{-1/2})$ bounds in the $d_{0,1,2,3}$ metric of \cite{a18} for the multivariate normal approximation of the multi-parameter MLE only yield $\mathcal{O}(n^{-1/8})$ bounds in the Kolmogorov metric, whilst our $\mathcal{O}(n^{-1/2})$ \gr{1-}Wasserstein distance bounds lead to $\mathcal{O}(n^{-1/4})$ Kolmogorov distance bounds.
\end{remark}

%Similar considerations to those used in the proof of Proposition \ref{dkdw} show that there exists a universal constant $C$ (which can be found explicitly) such that $d_{\mathrm{K}}(\boldsymbol{Y},\boldsymbol{Z})\leq C\big(d_{0,1,2,3}(\boldsymbol{Y},\boldsymbol{Z})\big)^{1/4}$.  We omit the details and refer the reader to the proof of Corollary 4.2 of \cite{gpr17} for a proof of a similar bound.  Consequently, the $\mathcal{O}(n^{-1/2})$ bounds in the $d_{0,1,2,3}$ metric of \cite{a18} for the multivariate normal approximation of the multi-parameter MLE only yield $\mathcal{O}(n^{-1/8})$ bounds in the Kolmogorov metric, whilst our $\mathcal{O}(n^{-1/2})$ Wasserstein distance bounds lead to $\mathcal{O}(n^{-1/4})$ Kolmogorov distance bounds.

\subsection{Wasserstein distance bounds by Stein's method}

Optimal order $\mathcal{O}(n^{-1/2})$ \gr{1-}Wasserstein distance bounds for the normal approximation of sums of independent random variables via Stein's method date as far back as \cite{e74}.  We shall make use of the following result.

\begin{theorem}[Reinert \cite{Reinert_coupling}]\label{thmapw} Let $\xi_1,\ldots,\xi_n$ be i.i.d$.$ random variables with $\mathbb{E}[\xi_1]=0$, $\mathrm{Var}(\xi_1)=1$ and $\mathbb{E}[|\xi_1|^3]<\infty$.  Denote $W=\frac{1}{\sqrt{n}}\sum_{i=1}^n\xi_i$ and let $Z\sim \mathrm{N}(0,1)$.  Then
\begin{equation*}
%\label{thmapwd}
d_{\mathrm{W}}(W,Z)\leq\frac{1}{\sqrt{n}}\big(2+\mathbb{E}[|\xi_1|^3]\big).
\end{equation*}
\end{theorem}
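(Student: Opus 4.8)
The plan is to use Stein's method via a leave-one-out (local) coupling, which is the route taken by Reinert \cite{Reinert_coupling}. Writing $Z\sim\mathrm{N}(0,1)$, recall that for a test function $h$ with $\|h\|_{\mathrm{Lip}}\leq1$ the associated Stein equation $f'(w)-wf(w)=h(w)-\mathbb{E}[h(Z)]$ admits a solution $f=f_h$ whose derivatives satisfy the standard bounds $\|f_h'\|\leq\sqrt{2/\pi}$ and $\|f_h''\|\leq2$. Since the integral probability metric representation via the class $\mathcal{H}_{\mathrm{W}}$ gives $d_{\mathrm{W}}(W,Z)=\sup_{\|h\|_{\mathrm{Lip}}\leq1}|\mathbb{E}[h(W)]-\mathbb{E}[h(Z)]|$, it suffices to bound $|\mathbb{E}[f'(W)-Wf(W)]|$ uniformly over all $f$ satisfying $\|f''\|\leq2$.

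First I would exploit the independence structure. For each $i$ set $W_i:=W-\xi_i/\sqrt{n}=\frac{1}{\sqrt{n}}\sum_{j\neq i}\xi_j$, so that $W_i$ is independent of $\xi_i$. Writing $Wf(W)=\frac{1}{\sqrt{n}}\sum_{i=1}^n\xi_i f(W)$ and Taylor expanding $f(W)=f(W_i)+\frac{\xi_i}{\sqrt{n}}f'(W_i)+R_i$ about $W_i$, the properties $\mathbb{E}[\xi_i]=0$ and $\mathrm{Var}(\xi_i)=1$ together with the independence of $\xi_i$ and $W_i$ give $\mathbb{E}[\xi_i f(W_i)]=0$ and $\mathbb{E}[\xi_i^2 f'(W_i)]=\mathbb{E}[f'(W_i)]$. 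Hence
\begin{equation*}
\mathbb{E}[Wf(W)]=\frac{1}{n}\sum_{i=1}^n\mathbb{E}[f'(W_i)]+\frac{1}{\sqrt{n}}\sum_{i=1}^n\mathbb{E}[\xi_i R_i]=\mathbb{E}[f'(W_1)]+\frac{1}{\sqrt{n}}\sum_{i=1}^n\mathbb{E}[\xi_i R_i],
\end{equation*}
where the final equality uses that the $W_i$ are identically distributed. Substituting into the Stein expression yields
\begin{equation*}
\mathbb{E}[f'(W)-Wf(W)]=\mathbb{E}[f'(W)-f'(W_1)]-\frac{1}{\sqrt{n}}\sum_{i=1}^n\mathbb{E}[\xi_i R_i].
\end{equation*}

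It then remains to estimate the two terms. For the first, since $W=W_1+\xi_1/\sqrt{n}$, the mean value theorem and $\|f''\|\leq2$ give $|\mathbb{E}[f'(W)-f'(W_1)]|\leq\frac{2}{\sqrt{n}}\mathbb{E}[|\xi_1|]\leq\frac{2}{\sqrt{n}}$, where the last step uses $\mathbb{E}[|\xi_1|]\leq(\mathbb{E}[\xi_1^2])^{1/2}=1$ by Cauchy--Schwarz. For the remainder, Taylor's theorem gives $|R_i|\leq\frac12\|f''\|(\xi_i/\sqrt{n})^2\leq\frac{1}{n}\xi_i^2$, so that $|\mathbb{E}[\xi_i R_i]|\leq\frac{1}{n}\mathbb{E}[|\xi_1|^3]$ and the sum over $i$ contributes at most $\frac{1}{\sqrt{n}}\mathbb{E}[|\xi_1|^3]$. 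Adding the two contributions gives $d_{\mathrm{W}}(W,Z)\leq\frac{1}{\sqrt{n}}(2+\mathbb{E}[|\xi_1|^3])$, as claimed; note that $\mathbb{E}[|\xi_1|^3]<\infty$ is precisely what makes the remainder estimate finite.

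The only genuinely delicate input is the regularity bound $\|f_h''\|\leq2$ for the Stein solution of a Lipschitz test function, which I would simply quote from the standard Stein's method literature rather than re-derive. Everything else is bookkeeping: the decomposition is driven entirely by independence and the moment normalisations, and the error control is a two-term Taylor expansion. The main point to watch is keeping the constants sharp---using $\mathbb{E}[|\xi_1|]\leq1$ and $\|f''\|\leq2$ in exactly the right places---so that the final constant comes out as the stated $2+\mathbb{E}[|\xi_1|^3]$ rather than something larger.
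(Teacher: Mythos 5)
Your proof is correct: the leave-one-out decomposition, the use of $\mathbb{E}[\xi_i]=0$, $\mathbb{E}[\xi_i^2]=1$ and independence of $\xi_i$ from $W_i$, the Taylor remainder bound $|R_i|\leq\xi_i^2/n$, and the Stein-solution bound $\|f_h''\|\leq 2$ combine exactly as you say to give $\frac{1}{\sqrt{n}}\big(2\mathbb{E}|\xi_1|+\mathbb{E}[|\xi_1|^3]\big)\leq\frac{1}{\sqrt{n}}\big(2+\mathbb{E}[|\xi_1|^3]\big)$. The paper itself offers no proof of this statement—it is quoted as a known result of Reinert \cite{Reinert_coupling}—and your argument is precisely the standard Stein's method derivation underlying that citation, so there is nothing to fault and no genuinely different route to compare.
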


Only very recently have optimal order Wasserstein distance bounds been obtained for multivariate normal approximation of independent random vectors.  There has been quite a lot of activity on this topic over the last few years, and amongst the bounds from this literature we \gr{use a bound of \cite{bonis} given in Theorem \ref{bonisthm} below}.
% for our purposes.  
This is on account of the weak conditions, simplicity, and good dependence on the dimension $d$ that is sufficient for our purposes. It should be noted, however, that there are bounds in the literature that have a better dependence on the dimension $d$; see, for example, \cite{cfp17}, in which\gr{, in the case of the 2-Wasserstein distance,} the fourth moment condition of \cite{bonis} is replaced by a Poincar\'{e} inequality condition. \gr{If we were to use such a bound with improved dependence on $d$ in the derivation of our general bounds of Theorems \ref{Theorem_multi} and \ref{thmwasp}, it would, however, make no difference to the overall dependence of the bound on the dimension $d$. We also note that in the univariate case, optimal order $n^{-1/2}$ $p$-Wasserstein distance bounds have been obtained for the normal approximation of sums of independent random variables without the use of Stein's method; see \cite{r09} and references therein.}
%, because in our derivation of these general bounds the remainder bounded using Stein's method has a better dependence on the dimension $d$ than another remainder term bounded without the use of Stein's method. 

%In the following, $|\cdot|$ denotes the usual Euclidean norm.  
%The bound in the following theorem actually holds in the stronger Wasserstein distance of order 2, but for our purposes we only need a bound in the usual $L^1$-Wasserstein distance.  
The bound (\ref{bonisbound2}) below is not stated in \cite{bonis}, but is easily obtained from the bound (\ref{bonisbound}) (which is given in \cite{bonis}) by an application of H\"older's inequality. \gr{The bound (\ref{bonisbound200}) is also not stated in \cite{bonis}, but is again easily obtained from the bound (\ref{bonisbound00}) (which is given in \cite{bonis}) by this time applying the basic inequality $\big(\sum_{j=1}^d a_j\big)^{r}\leq d^{r-1}\sum_{j=1}^da_j^r$, where $a_1,\ldots,a_d\geq0$ and $r\geq2$.} 
%We shall make use of the bound (\ref{bonisbound2}) in the proof of Theorem \ref{thmnondiag}.

For a $d\times d$ matrix $A$, let $\|A\|_F=\sqrt{\sum_{i=1}^d\sum_{j=1}^d |a_{i,j}|^2}$ be the Frobenius norm.

\begin{theorem}[Bonis \cite{bonis}]\label{bonisthm}  Let $\boldsymbol{\xi}_1,\ldots,\boldsymbol{\xi}_n$ be i.i.d$.$ random vectors in $\mathbb{R}^d$ with $\mathbb{E}[\boldsymbol{\xi}_1] = \boldsymbol{0}$ and $\mathbb{E}[\boldsymbol{\xi}_1\boldsymbol{\xi}_1^{\intercal}] =I_d$.  Let $\boldsymbol{W}=\frac{1}{\sqrt{n}}\sum_{i=1}^n\boldsymbol{\xi}_i$ and let $\boldsymbol{Z}\sim\mathrm{MVN}(\boldsymbol{0},I_d)$.  Suppose that $\mathbb{E}[|\boldsymbol{\xi}_1|^4]<\infty$.  Then
\begin{align}\label{bonisbound}d_{\gr{\mathrm{W}_2}}(\boldsymbol{W},\boldsymbol{Z})&\leq \frac{14 d^{1/4}}{\sqrt{n}}\sqrt{\|\mathbb{E}[\boldsymbol{\xi}_1\boldsymbol{\xi}_1^{\intercal}|\boldsymbol{\xi}_1|^2]\|_F}\\
\label{bonisbound2}&\leq \frac{14d^{5/4}}{\sqrt{n}}\mathrm{max}_{1\leq j \leq d}\sqrt{\mathbb{E}[\xi_{1,j}^4]},
\end{align}
where $\xi_{1,j}$ is the $j$-th component of $\boldsymbol{\xi}_1$.

\gr{Suppose now that $\mathbb{E}[|\boldsymbol{\xi}_1|^{p+2}]<\infty$ for $p\geq2$. Then there exists a constant $C_p>0$ depending only on $p$ such that
\begin{align}\label{bonisbound00}d_{\mathrm{W}_p}(\boldsymbol{W},\boldsymbol{Z})&\leq \frac{C_p}{\sqrt{n}}\Big(\sqrt{\|\mathbb{E}[\boldsymbol{\xi}_1\boldsymbol{\xi}_1^{\intercal}|\boldsymbol{\xi}_1|^2]\|_F}+\big(\mathbb{E}[|\boldsymbol{\xi}_1|^{p+2}]\big)^{1/p}\Big)\\
\label{bonisbound200}&\leq \frac{C_p}{\sqrt{n}}\Big(d^{5/4}\mathrm{max}_{1\leq j \leq d}\sqrt{\mathbb{E}[\xi_{1,j}^4]}+d^{1/2+1/p}\mathrm{max}_{1\leq j \leq d}\big(\mathbb{E}[|\xi_{1,j}|^{p+2}]\big)^{1/p}\Big).
\end{align}}
\end{theorem}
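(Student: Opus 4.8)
The plan is to treat the two inequalities (\ref{bonisbound}) and (\ref{bonisbound00}) as given, since these are precisely the bounds proved in \cite{bonis}, and to deduce (\ref{bonisbound2}) and (\ref{bonisbound200}) from them by elementary estimates. Throughout I would write $A=\mathbb{E}[\boldsymbol{\xi}_1\boldsymbol{\xi}_1^{\intercal}|\boldsymbol{\xi}_1|^2]$, with entries $a_{i,j}=\mathbb{E}[\xi_{1,i}\xi_{1,j}|\boldsymbol{\xi}_1|^2]$, and use $|\boldsymbol{\xi}_1|^2=\sum_{k=1}^d\xi_{1,k}^2$. The only quantity needing real work is the Frobenius norm $\|A\|_F$, which I would reduce to a scalar moment.

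To pass from (\ref{bonisbound}) to (\ref{bonisbound2}), I would first apply the Cauchy--Schwarz inequality entrywise, writing $a_{i,j}=\mathbb{E}\big[(\xi_{1,i}|\boldsymbol{\xi}_1|)(\xi_{1,j}|\boldsymbol{\xi}_1|)\big]$ to get $|a_{i,j}|\le\sqrt{a_{i,i}}\sqrt{a_{j,j}}$, whence $\|A\|_F^2=\sum_{i,j}a_{i,j}^2\le\big(\sum_i a_{i,i}\big)^2=(\mathrm{tr}\,A)^2$. This gives $\|A\|_F\le\mathrm{tr}\,A=\mathbb{E}[|\boldsymbol{\xi}_1|^4]$, reflecting that $A$ is positive semidefinite. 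Then, by the basic inequality $\big(\sum_k a_k\big)^r\le d^{r-1}\sum_k a_k^r$ with $r=2$ and $a_k=\xi_{1,k}^2$, and bounding the resulting sum by $d$ times its largest term,
\begin{equation*}
\mathbb{E}[|\boldsymbol{\xi}_1|^4]=\mathbb{E}\Big[\big(\textstyle\sum_k\xi_{1,k}^2\big)^2\Big]\le d\sum_{k=1}^d\mathbb{E}[\xi_{1,k}^4]\le d^2\max_{1\le j\le d}\mathbb{E}[\xi_{1,j}^4].
\end{equation*}
Taking square roots yields $\sqrt{\|A\|_F}\le d\,\max_{1\le j\le d}\sqrt{\mathbb{E}[\xi_{1,j}^4]}$, and substituting into (\ref{bonisbound}) produces the factor $d^{1/4}\cdot d=d^{5/4}$, which is exactly (\ref{bonisbound2}).

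To pass from (\ref{bonisbound00}) to (\ref{bonisbound200}) I would bound the two summands separately. The first summand $\sqrt{\|A\|_F}$ is handled exactly as above, giving $\sqrt{\|A\|_F}\le d\,\max_j\sqrt{\mathbb{E}[\xi_{1,j}^4]}\le d^{5/4}\max_j\sqrt{\mathbb{E}[\xi_{1,j}^4]}$ (using $d\le d^{5/4}$ for $d\ge1$, so as to match the form of the first term). For the second summand I would apply the same power inequality, this time with exponent $r=(p+2)/2$, which satisfies $r\ge2$ precisely because $p\ge2$; taking $a_k=\xi_{1,k}^2$ gives $|\boldsymbol{\xi}_1|^{p+2}=\big(\sum_k\xi_{1,k}^2\big)^{(p+2)/2}\le d^{p/2}\sum_k|\xi_{1,k}|^{p+2}$. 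Taking expectations, bounding the sum by $d\max_j\mathbb{E}[|\xi_{1,j}|^{p+2}]$, and raising to the power $1/p$ gives $\big(\mathbb{E}[|\boldsymbol{\xi}_1|^{p+2}]\big)^{1/p}\le d^{1/2+1/p}\max_j\big(\mathbb{E}[|\xi_{1,j}|^{p+2}]\big)^{1/p}$. Inserting both estimates into (\ref{bonisbound00}) yields (\ref{bonisbound200}).

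The calculations are essentially routine, so there is no serious obstacle; the two points that require a little care are the reduction of $\|A\|_F$ to the scalar moment $\mathbb{E}[|\boldsymbol{\xi}_1|^4]$ via the trace bound (equivalently, via positive semidefiniteness of $A$), and checking that the exponent $r=(p+2)/2$ in the power inequality is at least $2$, which is exactly where the standing hypothesis $p\ge2$ is used. Keeping track of the powers of $d$ across the two summands then delivers the stated bounds.
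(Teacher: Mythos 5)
Your proposal is correct and takes essentially the same route as the paper: inequalities (\ref{bonisbound}) and (\ref{bonisbound00}) are quoted from \cite{bonis}, and the bounds (\ref{bonisbound2}) and (\ref{bonisbound200}) are deduced from them by H\"older/Cauchy--Schwarz estimates reducing $\|\mathbb{E}[\boldsymbol{\xi}_1\boldsymbol{\xi}_1^{\intercal}|\boldsymbol{\xi}_1|^2]\|_F$ to $\mathbb{E}[|\boldsymbol{\xi}_1|^4]$, together with the basic inequality $\big(\sum_{j=1}^d a_j\big)^{r}\leq d^{r-1}\sum_{j=1}^d a_j^r$, which is exactly what the paper indicates. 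Your write-up merely supplies the routine details (entrywise Cauchy--Schwarz, the trace bound, and the componentwise moment estimates) that the paper leaves implicit.
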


\begin{comment}

Theorem 6 of \cite{bonis} generalises Theorem \ref{bonisthm} to i.n.i.d$.$ random vectors.  However, the bound is more complicated.  We therefore prefer the following result of \cite{raic}, which also has weak conditions, a simple form, and a good dependence on the dimension $d$, although in the i.i.d$.$ case the dependence on the dimension $d$ is not quite as good as Theorem \ref{bonisthm}.  %However, in many applications of our main results, this slightly worse dependence on the dimension will not be important, whereas the simplicity of the bound is useful; see Remark \ref{} for a further discussion.

\begin{theorem}[Rai\v{c} \cite{raic}]\label{thmapwm} Let $\boldsymbol{\xi}_1,\ldots,\boldsymbol{\xi}_n$ be independent random vectors with $\mathbb{E}[\boldsymbol{\xi}_i]=\boldsymbol{0}$ and $\mathbb{E}[|\boldsymbol{\xi}_i|^3]<\infty$ for all $1\leq i\leq n$.  Denote $\boldsymbol{W}=\frac{1}{\sqrt{n}}\sum_{i=1}^n\boldsymbol{\xi}_i$ and suppose $\mathrm{Var}(\boldsymbol{W})=I_d$. Let $\boldsymbol{Z}\sim\mathrm{MVN}(\boldsymbol{0},I_d)$. Then
\begin{align*}
%\label{thmapwdm}
d_{\mathrm{W}}(\boldsymbol{W},\boldsymbol{Z})&\leq\frac{(11.1+0.83\log d)}{n^{3/2}}\sum_{i=1}^n\mathbb{E}[|\boldsymbol{\xi}_i|^3] \\
%\label{thmapwdm2}
&\leq\frac{(11.1+0.83\log d)\sqrt{d}}{n^{3/2}}\sum_{i=1}^n\sum_{j=1}^d\mathbb{E}[|\xi_{i,j}|^3].
\end{align*}
\end{theorem}
\end{comment}

\section{Main results and proofs}\label{sec3}

%\subsection{Wasserstein distance error bounds for the MLE}\label{sec3.1}

For ease of presentation, let us now introduce the following notation:
\begin{equation}
\begin{aligned}
\label{cm}
\boldsymbol{W}&=\sqrt{n}[I(\boldsymbol{\theta}_0)]^{1/2}\big(\hat{\boldsymbol{\theta}}_n(\boldsymbol{X}) - \boldsymbol{\theta}_0\big),\\
 Q_{j}&=Q_{j}(\boldsymbol{X},\boldsymbol{\theta}_0) := \hat{\theta}_n(\boldsymbol{X})_{j} - \theta_{0,j}, \quad  j \in \left\lbrace 1,2,\ldots,d \right\rbrace,\\
 T_{lj} &= T_{lj}\left(\boldsymbol{\theta}_0,\boldsymbol{X}\right) = \frac{\partial^2}{\partial\theta_l\partial\theta_j}\ell(\boldsymbol{\theta}_0;\boldsymbol{X}) + n[I(\boldsymbol{\theta}_0)]_{lj}, \quad j,l \in \left\lbrace 1,2,\ldots,d \right\rbrace,\\
 \tilde{V}& = \tilde{V}(n,\boldsymbol{\theta}_0) := \left[I(\boldsymbol{\theta}_0)\right]^{-1/2},\\
 \xi_{ij} &= \sum_{k=1}^{d}\tilde{V}_{jk}\frac{\partial}{\partial \theta_k}\log(f(\boldsymbol{X}_i|\boldsymbol{\theta}_0)), \quad i \in \left\lbrace 1,2,\ldots,n \right\rbrace,\; j \in \left\lbrace 1,2,\ldots,d \right\rbrace.
\end{aligned}
\end{equation}
Notice that, using condition (R.C.3), $\EE\left[T_{lj}\right] = 0$ for all $j,l\in\{1,2,\ldots,d\}$.  
%We also note that in the i.i.d$.$ case $I(\boldsymbol{\theta}_0)=I(\boldsymbol{\theta}_0)$.

\gr{A general $1$-Wasserstein distance error bound for the multivariate normal approximation of the multi-parameter MLE is given in the following theorem.}

\begin{theorem}
\label{Theorem_multi}
Let $\boldsymbol{X}=(\boldsymbol{X}_1, \boldsymbol{X}_2, \ldots, \boldsymbol{X}_n)$ be i.i.d$.$ $\mathbb{R}^t$-valued, $t \in \mathbb{Z}^+$, random vectors with probability density (or mass) function $f(\boldsymbol{x}_i|\boldsymbol{\theta})$, for which the true parameter value is $\boldsymbol{\theta}_0$ and the parameter space $\Theta$ is an open subset of $\mathbb{R}^d$. Assume that the MLE exists and is unique and that (R.C.1)--(R.C.3), (R.C.4'') are satisfied. In addition, for $\tilde{V}$ as in \eqref{cm}, assume that $\E[|\tilde{V}\nabla\left(\log\left(f(\boldsymbol{X}_1|\boldsymbol{\theta}_0)\right)\right)|^4] < \infty$, where  $\nabla=\big(\frac{\partial}{\partial \theta_1},\ldots,\frac{\partial}{\partial \theta_d}\big)^\intercal$. Also, assume that $\mathbb{E}[Q_l^2]<\infty$ for all $l\in\{1,2,\ldots,d\}$ and $\mathbb{E}[T_{lj}^2]<\infty$ for all $j,l\in\{1,2,\ldots,d\}$.  
% Let $\sqrt{n}[I(\boldsymbol{\theta}_0)]^{\frac{1}{2}}\left(\boldsymbol{\hat{\theta}_n(X)} - \boldsymbol{\theta}_0\right)$.
 Then
\begin{align}
\label{final_bound_regression}
 d_{\mathrm{W}}(\boldsymbol{W}, \boldsymbol{Z}) \leq \frac{1}{\sqrt{n}}\big(K_1(\boldsymbol{\theta}_0)+K_2(\boldsymbol{\theta}_0)+K_3(\boldsymbol{\theta}_0)\big),
\end{align}
where
\begin{align}
\label{notation_Ki}
\nonumber K_1(\boldsymbol{\theta}_0)&= 14d^{5/4}\max_{1\leq j \leq d}\sqrt{\E[\xi_{1,j}^4]},\\
\nonumber K_2(\boldsymbol{\theta}_0)&=\sum_{k=1}^{d}\sum_{j=1}^{d}|\tilde{V}_{kj}|\sum_{l=1}^{d}\sqrt{\E[Q_l^2]}\sqrt{\E[T_{lj}^2]},\\
K_3(\boldsymbol{\theta}_0)&=\frac{1}{2}\sum_{k=1}^{d}\sum_{j=1}^{d}|\tilde{V}_{kj}|\sum_{l=1}^{d}\sum_{q=1}^d\sum_{\substack{\tilde{\theta}_m \in \left\lbrace\hat{\theta}_n(\boldsymbol{X})_m, \theta_{0,m}\right\rbrace\\m \in \left\lbrace 1,2,\ldots, d\right\rbrace}}\E\big|Q_lQ_qM_{qlj}(\boldsymbol{\tilde{\theta}};\boldsymbol{X})\big|.
\end{align}
\end{theorem}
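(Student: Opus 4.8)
The plan is to combine the integral-probability-metric representation $d_{\mathrm{W}}(\boldsymbol{W},\boldsymbol{Z})=\sup_{\|h\|_{\mathrm{Lip}}\le1}|\E h(\boldsymbol{W})-\E h(\boldsymbol{Z})|$ with a Taylor linearisation of the score equation, reducing the problem to a normalised sum of i.i.d.\ vectors (handled by Bonis' bound, Theorem \ref{bonisthm}) plus two explicit remainder terms. Since $\Theta$ is open and the MLE exists and is unique, $\hat{\boldsymbol{\theta}}_n(\boldsymbol{X})$ is an interior stationary point, so $\nabla\ell(\hat{\boldsymbol{\theta}}_n;\boldsymbol{X})=\boldsymbol{0}$. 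First I would expand each component of the score about $\boldsymbol{\theta}_0$ to second order with a Lagrange-type remainder; because (R.C.4''($p$)) forces all third-order partial derivatives to exist for every $\boldsymbol{\theta}$, this expansion is valid along the whole segment joining $\boldsymbol{\theta}_0$ and $\hat{\boldsymbol{\theta}}_n$, with no neighbourhood restriction. Writing $Q_j=\hat\theta_n(\boldsymbol{X})_j-\theta_{0,j}$ and using $\frac{\partial^2}{\partial\theta_l\partial\theta_j}\ell(\boldsymbol{\theta}_0;\boldsymbol{X})=T_{lj}-n[I(\boldsymbol{\theta}_0)]_{lj}$, the stationarity condition rearranges to $n[I(\boldsymbol{\theta}_0)]\boldsymbol{Q}=\nabla\ell(\boldsymbol{\theta}_0;\boldsymbol{X})+\boldsymbol{R}$, where $R_j=\sum_l Q_lT_{lj}+\tfrac12\sum_{l,q}Q_lQ_q\frac{\partial^3}{\partial\theta_q\partial\theta_l\partial\theta_j}\ell(\tilde{\boldsymbol{\theta}};\boldsymbol{X})$ and $\tilde{\boldsymbol{\theta}}$ is an intermediate point on that segment.

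Multiplying through by $\tfrac{1}{\sqrt n}\tilde V=\tfrac{1}{\sqrt n}[I(\boldsymbol{\theta}_0)]^{-1/2}$ and recalling $\boldsymbol{W}=\sqrt n[I(\boldsymbol{\theta}_0)]^{1/2}\boldsymbol{Q}$ gives the decomposition $\boldsymbol{W}=\boldsymbol{W}^*+\tfrac{1}{\sqrt n}\tilde V\boldsymbol{R}$, where $\boldsymbol{W}^*=\tfrac{1}{\sqrt n}\sum_{i=1}^n\boldsymbol{\xi}_i$ with $\boldsymbol{\xi}_i=\tilde V\nabla\log f(\boldsymbol{X}_i|\boldsymbol{\theta}_0)$ having components $\xi_{ij}$ as in \eqref{cm}. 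By (R.C.2) we have $\E[\boldsymbol{\xi}_1]=\boldsymbol{0}$, and by (R.C.3) the single-observation score has covariance $I(\boldsymbol{\theta}_0)$, so $\E[\boldsymbol{\xi}_1\boldsymbol{\xi}_1^{\intercal}]=\tilde V I(\boldsymbol{\theta}_0)\tilde V^{\intercal}=I_d$ (using that $\tilde V$ is symmetric). Thus $\boldsymbol{W}^*$ satisfies the hypotheses of Theorem \ref{bonisthm}, and the assumption $\E[|\tilde V\nabla\log f(\boldsymbol{X}_1|\boldsymbol{\theta}_0)|^4]<\infty$ lets me apply \eqref{bonisbound2} together with $d_{\mathrm{W}}\le d_{\mathrm{W}_2}$ (from \eqref{pqpq}) to obtain $d_{\mathrm{W}}(\boldsymbol{W}^*,\boldsymbol{Z})\le \tfrac{1}{\sqrt n}K_1(\boldsymbol{\theta}_0)$.

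By the triangle inequality $d_{\mathrm{W}}(\boldsymbol{W},\boldsymbol{Z})\le d_{\mathrm{W}}(\boldsymbol{W},\boldsymbol{W}^*)+d_{\mathrm{W}}(\boldsymbol{W}^*,\boldsymbol{Z})$, and since $\boldsymbol{W}$ and $\boldsymbol{W}^*$ are built from the same data, the natural coupling yields $d_{\mathrm{W}}(\boldsymbol{W},\boldsymbol{W}^*)\le\E|\boldsymbol{W}-\boldsymbol{W}^*|=\tfrac{1}{\sqrt n}\E|\tilde V\boldsymbol{R}|$. Bounding the Euclidean norm by the sum of its entries gives $\E|\tilde V\boldsymbol{R}|\le\sum_{k,j}|\tilde V_{kj}|\,\E|R_j|$; splitting $R_j$ into its $T$-part and $M$-part, Cauchy--Schwarz on $\E[|Q_l||T_{lj}|]\le\sqrt{\E[Q_l^2]}\sqrt{\E[T_{lj}^2]}$ produces exactly the $K_2$ sum. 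For the third-order part I would bound $|\partial^3\ell(\tilde{\boldsymbol{\theta}};\boldsymbol{X})|$ by $M_{qlj}(\tilde{\boldsymbol{\theta}};\boldsymbol{X})$ via (R.C.4''), then invoke the monotonicity of $M_{qlj}$ to replace its value at the unknown $\tilde{\boldsymbol{\theta}}$ by the maximum, hence the sum, over the $2^d$ corner points $\tilde\theta_m\in\{\hat\theta_n(\boldsymbol{X})_m,\theta_{0,m}\}$, which gives $K_3$.

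The step I expect to be the main obstacle is precisely this last replacement of the intermediate Taylor point by the corner points. The point $\tilde{\boldsymbol{\theta}}$ lies only somewhere on the segment between $\boldsymbol{\theta}_0$ and $\hat{\boldsymbol{\theta}}_n$ and is itself a random, implicitly defined object, so $M_{qlj}$ cannot be evaluated there directly. The device that makes the argument work --- and the reason the bespoke monotonicity hypothesis is built into (R.C.4'') --- is that a function monotone in each coordinate separately attains its maximum over the coordinate box with those corners at one of the corners; since $M_{qlj}\ge0$ this dominates its value at $\tilde{\boldsymbol{\theta}}$, and bounding the maximum by the sum keeps the expectations separable so that \eqref{intconrc} (with $p=1$) guarantees finiteness. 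A little care is also needed at the outset to confirm that $\boldsymbol{W}$, $\boldsymbol{W}^*$ and $\boldsymbol{Z}$ have finite first moments so that all the Wasserstein distances are well defined, which follows from the stated second-moment and integrability assumptions.
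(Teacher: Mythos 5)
Your proposal is correct and follows essentially the same route as the paper's proof: the same second-order Taylor expansion of the score at $\hat{\boldsymbol{\theta}}_n$ around $\boldsymbol{\theta}_0$, the same decomposition $\boldsymbol{W}=\frac{1}{\sqrt{n}}\tilde{V}\nabla\ell(\boldsymbol{\theta}_0;\boldsymbol{X})+\frac{1}{\sqrt{n}}\tilde{V}\boldsymbol{R}$ with Bonis' bound (via $d_{\mathrm{W}}\leq d_{\mathrm{W}_2}$) handling the linear part, Cauchy--Schwarz producing $K_2$, and the coordinate-wise monotonicity/corner-point argument of (R.C.4'') producing $K_3$. The only cosmetic difference is that you bound the remainder distance via the natural coupling, $d_{\mathrm{W}}(\boldsymbol{W},\boldsymbol{W}^*)\leq\mathbb{E}|\boldsymbol{W}-\boldsymbol{W}^*|$, whereas the paper phrases the identical estimate through the integral probability metric representation with Lipschitz test functions.
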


%\section{The single-parameter case}
%\label{sec:single-parameter_case}

The following theorem is a simplification of Theorem \ref{Theorem_multi} for the single-parameter MLE.

\begin{theorem}
\label{Theoremnoncan}
Let $\boldsymbol{X}=(X_1, X_2, \ldots, X_n)$ be i.i.d$.$ random variables with probability density (or mass) function $f(x_i|\theta)$, for which the true parameter value is $\theta_0$ and the parameter space $\Theta$ is an open subset of $\mathbb{R}$.  Assume that the regularity conditions (R1)--(R3), (R.C.4'') are satisfied and that the MLE, $\hat{\theta}_n(\boldsymbol{X})$, exists and is unique. Assume that $\mathbb{E}\big[\left|\frac{\mathrm{d}}{\mathrm{d}\theta}{\rm log}f(X_1|\theta_0)\right|^3\big] < \infty$, ${\rm Var}\big(\frac{\mathrm{d}^2}{\mathrm{d}\theta^2} \log f(X_1|\theta_0)\big)<\infty$ and $\EE[(\hat{\theta}_n(\boldsymbol{X})- \theta_0)^2]<\infty$.
% Let $\sqrt{n\:i(\theta_0)}(\hat{\theta}_n(\boldsymbol{X}) - \theta_0)$.
 Let $Z \sim {\rm N}(0,1)$. Then
\begin{align}
\label{boundTHEOREM} &d_{\mathrm{W}}(W,Z) \leq \frac{1}{\sqrt{n}}\bigg\{2 + \frac{1}{[i(\theta_0)]^{3/2}}\EE\bigg[\bigg|\frac{\mathrm{d}}{\mathrm{d}\theta}{\rm log}f(X_1|\theta_0)\bigg|^3\bigg]\nonumber\\
&\quad+\frac{1}{\sqrt{i(\theta_0)}}\sqrt{n{\rm Var}\left(\frac{\mathrm{d}^2}{\mathrm{d}\theta^2} \log f(X_1|\theta_0)\right)}\sqrt{\EE\big[(\hat{\theta}_n(\boldsymbol{X})- \theta_0)^2\big]}\nonumber\\
&\quad+\frac{1}{2\sqrt{i(\theta_0)}}\Big(\EE\big|(\hat{\theta}_n(\boldsymbol{X}) - \theta_0)^2M(\theta_0;\boldsymbol{X})\big|+ \EE\big|(\hat{\theta}_n(\boldsymbol{X}) - \theta_0)^2M(\hat{\theta}_n(\boldsymbol{X});\boldsymbol{X})\big|\Big)\bigg\}.
\end{align}
\end{theorem}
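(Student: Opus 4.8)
The plan is to follow the strategy that will be used for Theorem \ref{Theorem_multi}, specialised to $d=1$, but to replace the multivariate bound of Bonis (Theorem \ref{bonisthm}) by Reinert's sharper univariate bound (Theorem \ref{thmapw}), which requires only a third moment of the score and produces the cleaner leading constant. Writing $W=\sqrt{n}\sqrt{i(\theta_0)}(\hat{\theta}_n(\boldsymbol{X})-\theta_0)$, I would introduce the normalised score
\begin{equation*}
W^*=\frac{1}{\sqrt{ni(\theta_0)}}\frac{\mathrm{d}}{\mathrm{d}\theta}\ell(\theta_0;\boldsymbol{X})=\frac{1}{\sqrt{n}}\sum_{i=1}^n\xi_{i1},\qquad \xi_{i1}=\frac{1}{\sqrt{i(\theta_0)}}\frac{\mathrm{d}}{\mathrm{d}\theta}\log f(X_i|\theta_0).
\end{equation*}
Differentiating $\int f(x|\theta_0)\,\mathrm{d}x=1$ under the integral sign, which (R2) permits, shows that the i.i.d$.$ summands $\xi_{i1}$ have mean $0$ and, by the information identity and (R3), unit variance, while the assumption $\EE[|\frac{\mathrm{d}}{\mathrm{d}\theta}\log f(X_1|\theta_0)|^3]<\infty$ gives $\EE[|\xi_{11}|^3]<\infty$. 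The triangle inequality gives $d_{\mathrm{W}}(W,Z)\leq d_{\mathrm{W}}(W,W^*)+d_{\mathrm{W}}(W^*,Z)$, and Theorem \ref{thmapw} bounds the second summand by $\frac{1}{\sqrt{n}}\big(2+[i(\theta_0)]^{-3/2}\EE[|\frac{\mathrm{d}}{\mathrm{d}\theta}\log f(X_1|\theta_0)|^3]\big)$, producing the first two terms of (\ref{boundTHEOREM}).

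It remains to control $d_{\mathrm{W}}(W,W^*)$. Since $W$ and $W^*$ are both functions of the same sample $\boldsymbol{X}$, this coupling yields $d_{\mathrm{W}}(W,W^*)\leq\EE|W-W^*|$. To evaluate $W-W^*$ I would use that, as $\Theta$ is open and the MLE is the unique maximiser, $\frac{\mathrm{d}}{\mathrm{d}\theta}\ell(\hat{\theta}_n(\boldsymbol{X});\boldsymbol{X})=0$; Taylor expanding this to third order about $\theta_0$ with Lagrange remainder at an intermediate point $\theta^*$ lying between $\theta_0$ and $\hat{\theta}_n(\boldsymbol{X})$, then rearranging and dividing by $\sqrt{ni(\theta_0)}$, gives
\begin{equation*}
W-W^*=\frac{(\hat{\theta}_n(\boldsymbol{X})-\theta_0)\,T}{\sqrt{ni(\theta_0)}}+\frac{(\hat{\theta}_n(\boldsymbol{X})-\theta_0)^2}{2\sqrt{ni(\theta_0)}}\frac{\mathrm{d}^3}{\mathrm{d}\theta^3}\ell(\theta^*;\boldsymbol{X}),
\end{equation*}
where $T=\frac{\mathrm{d}^2}{\mathrm{d}\theta^2}\ell(\theta_0;\boldsymbol{X})+ni(\theta_0)$ has mean $0$ by the information identity. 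For the first summand, Cauchy--Schwarz gives $\EE|(\hat{\theta}_n(\boldsymbol{X})-\theta_0)T|\leq\sqrt{\EE[(\hat{\theta}_n(\boldsymbol{X})-\theta_0)^2]}\sqrt{\EE[T^2]}$, and since $T$ is a centred sum of $n$ i.i.d$.$ terms, $\EE[T^2]=n\,\mathrm{Var}\big(\frac{\mathrm{d}^2}{\mathrm{d}\theta^2}\log f(X_1|\theta_0)\big)$; this reproduces the third term of (\ref{boundTHEOREM}).

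I expect the remaining remainder, involving the third derivative evaluated at the uncontrolled random point $\theta^*$, to be the main obstacle, and it is precisely to handle it that (R.C.4'') is designed. I would bound $|\frac{\mathrm{d}^3}{\mathrm{d}\theta^3}\ell(\theta^*;\boldsymbol{X})|\leq M(\theta^*;\boldsymbol{X})$ and then exploit the monotonicity of $\theta\mapsto M(\theta;\boldsymbol{X})$: monotonicity forces $M(\theta^*;\boldsymbol{X})$ to lie between $M(\theta_0;\boldsymbol{X})$ and $M(\hat{\theta}_n(\boldsymbol{X});\boldsymbol{X})$, hence $M(\theta^*;\boldsymbol{X})\leq|M(\theta_0;\boldsymbol{X})|+|M(\hat{\theta}_n(\boldsymbol{X});\boldsymbol{X})|$, eliminating the dependence on $\theta^*$. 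Taking expectations then bounds the second summand by $\frac{1}{2\sqrt{ni(\theta_0)}}\big(\EE|(\hat{\theta}_n(\boldsymbol{X})-\theta_0)^2M(\theta_0;\boldsymbol{X})|+\EE|(\hat{\theta}_n(\boldsymbol{X})-\theta_0)^2M(\hat{\theta}_n(\boldsymbol{X});\boldsymbol{X})|\big)$, the two expectations being finite by the $p=1$ case of the integrability condition (\ref{intconrc}). This furnishes the fourth term of (\ref{boundTHEOREM}); collecting the three pieces of $\EE|W-W^*|$ with the bound on $d_{\mathrm{W}}(W^*,Z)$ and extracting the common factor $1/\sqrt{n}$ completes the proof.
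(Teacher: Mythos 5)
Your proposal is correct and follows essentially the same route as the paper: the paper proves Theorem \ref{Theoremnoncan} by running the proof of Theorem \ref{Theorem_multi} in the case $d=1$ with the term $R_1$ bounded by Reinert's Theorem \ref{thmapw} instead of Theorem \ref{bonisthm}, which is exactly your decomposition (triangle inequality through the normalised score, Reinert for the CLT piece, Taylor expansion of the score equation plus Cauchy--Schwarz and the (R.C.4'') monotonicity argument for the remainder). Your use of the coupling inequality $d_{\mathrm{W}}(W,W^*)\leq\EE|W-W^*|$ is only cosmetically different from the paper's Lipschitz test-function bound $|\EE[h(W)]-\EE[h(W^*)]|\leq\|h\|_{\mathrm{Lip}}\EE|W-W^*|$; the two are the same estimate.
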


\gr{$p$-Wasserstein distance analogues of the bounds of the above two theorems are given in the following theorem.}

\gr{\begin{theorem}\label{thmwasp} Let $p\geq2$. Let $\boldsymbol{X}=(\boldsymbol{X}_1, \boldsymbol{X}_2, \ldots, \boldsymbol{X}_n)$ be i.i.d$.$ $\mathbb{R}^t$-valued, $t \in \mathbb{Z}^+$, random vectors with probability density (or mass) function $f(\boldsymbol{x}_i|\boldsymbol{\theta})$, for which the true parameter value is $\boldsymbol{\theta}_0$ and the parameter space $\Theta$ is an open subset of $\mathbb{R}^d$. Assume that the MLE exists and is unique and that (R.C.1)--(R.C.3), (R.C.4''($p$)) are satisfied. In addition, for $\tilde{V}$ as in \eqref{cm}, assume that $\E[|\tilde{V}\nabla\left(\log\left(f(\boldsymbol{X}_1|\boldsymbol{\theta}_0)\right)\right)|^{p+2}] < \infty$. Also, assume that $\mathbb{E}[|Q_l|^{2p}]<\infty$ for all $l\in\{1,2,\ldots,d\}$ and $\mathbb{E}[|T_{lj}|^{2p}]<\infty$ for all $j,l\in\{1,2,\ldots,d\}$.  
% Let $\sqrt{n}[I(\boldsymbol{\theta}_0)]^{\frac{1}{2}}\left(\boldsymbol{\hat{\theta}_n(X)} - \boldsymbol{\theta}_0\right)$.
 Then
\begin{align}
\label{final_bound_regression0}
 d_{\mathrm{W}_p}(\boldsymbol{W}, \boldsymbol{Z}) \leq \frac{1}{\sqrt{n}}\big(K_{1,p}(\boldsymbol{\theta}_0)+K_{2,p}(\boldsymbol{\theta}_0)+K_{3,p}(\boldsymbol{\theta}_0)\big),
\end{align}
where
\begin{align}
\label{notationWp}
\nonumber K_{1,p}(\boldsymbol{\theta}_0)&= C_p\Big(d^{5/4}\max_{1\leq j \leq d}\sqrt{\E[\xi_{1,j}^4]}+d^{1/2+1/p}\big(\mathbb{E}[|\xi_{1,j}|^{p+2}]\big)^{1/p}\Big),\\
\nonumber K_{2,p}(\boldsymbol{\theta}_0)&=d^{3-3/p}\Bigg(\sum_{k=1}^{d}\sum_{j=1}^{d}|\tilde{V}_{kj}|^p\sum_{l=1}^{d}\sqrt{\E[|Q_l|^{2p}]}\sqrt{\E[|T_{lj}|^{2p}]}\Bigg)^{1/p},\\
K_{3,p}(\boldsymbol{\theta}_0)&=\frac{d^{4-4/p}}{2}\Bigg(\sum_{k=1}^{d}\sum_{j=1}^{d}|\tilde{V}_{kj}|^p\sum_{l=1}^{d}\sum_{q=1}^d\sum_{\substack{\tilde{\theta}_m \in \left\lbrace\hat{\theta}_n(\boldsymbol{X})_m, \theta_{0,m}\right\rbrace\\m \in \left\lbrace 1,2,\ldots, d\right\rbrace}}\E\big[\big|Q_lQ_qM_{qlj}(\boldsymbol{\tilde{\theta}};\boldsymbol{X})\big|^p\big]\bigg)^{1/p},
\end{align}
and $C_p>0$ is a constant depending only on $p$.

In the case $p=2$, we have the following simpler bound with an explicit constant:
\begin{align}
\label{final_bound_regression0z}
 d_{\mathrm{W}_2}(\boldsymbol{W}, \boldsymbol{Z}) \leq \frac{1}{\sqrt{n}}\big(K_1(\boldsymbol{\theta}_0)+K_{2,2}(\boldsymbol{\theta}_0)+K_{3,2}(\boldsymbol{\theta}_0)\big),
\end{align}
where $K_1(\boldsymbol{\theta}_0)$ is defined as in Theorem \ref{Theorem_multi}.
\end{theorem}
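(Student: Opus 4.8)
The plan is to follow the same route as the proof of Theorem~\ref{Theorem_multi}, replacing the $1$-Wasserstein tools with their $p$-Wasserstein counterparts. First I would use that the MLE is an interior maximum (the parameter space is open), so that $\nabla\ell(\hat{\boldsymbol{\theta}}_n(\boldsymbol{X});\boldsymbol{X})=\boldsymbol{0}$, and Taylor expand each component of the score about $\boldsymbol{\theta}_0$ to second order with a third-order Lagrange remainder. Writing the Hessian at $\boldsymbol{\theta}_0$ via $T_{lj}$ and $n[I(\boldsymbol{\theta}_0)]_{lj}$ and rearranging gives the identity $n[I(\boldsymbol{\theta}_0)]\boldsymbol{Q}=\nabla\ell(\boldsymbol{\theta}_0;\boldsymbol{X})+T\boldsymbol{Q}+\tfrac12\boldsymbol{R}$, where $\boldsymbol{R}$ collects the third-derivative remainders. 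Premultiplying by $\tfrac{1}{\sqrt n}\tilde V=\tfrac{1}{\sqrt n}[I(\boldsymbol{\theta}_0)]^{-1/2}$ yields the decomposition $\boldsymbol{W}=\boldsymbol{W}_0+\tfrac{1}{\sqrt n}\tilde V T\boldsymbol{Q}+\tfrac{1}{2\sqrt n}\tilde V\boldsymbol{R}$, where $\boldsymbol{W}_0=\tfrac{1}{\sqrt n}\sum_{i=1}^n\boldsymbol{\xi}_i$ has i.i.d.\ summands $\boldsymbol{\xi}_i=(\xi_{i1},\ldots,\xi_{id})$ with $\E[\boldsymbol{\xi}_1]=\boldsymbol{0}$ (by (R.C.2)) and $\E[\boldsymbol{\xi}_1\boldsymbol{\xi}_1^{\intercal}]=I_d$ (by (R.C.3) and $\tilde V=[I(\boldsymbol{\theta}_0)]^{-1/2}$). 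The triangle inequality for $d_{\mathrm{W}_p}$ then splits the bound as $d_{\mathrm{W}_p}(\boldsymbol{W},\boldsymbol{Z})\le d_{\mathrm{W}_p}(\boldsymbol{W}_0,\boldsymbol{Z})+d_{\mathrm{W}_p}(\boldsymbol{W},\boldsymbol{W}_0)$.

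For the first summand I would apply Theorem~\ref{bonisthm} to $\boldsymbol{W}_0$: the hypothesis $\E[|\tilde V\nabla\log f(\boldsymbol{X}_1|\boldsymbol{\theta}_0)|^{p+2}]<\infty$ is precisely $\E[|\boldsymbol{\xi}_1|^{p+2}]<\infty$, so bound~(\ref{bonisbound200}) gives $d_{\mathrm{W}_p}(\boldsymbol{W}_0,\boldsymbol{Z})\le \tfrac{1}{\sqrt n}K_{1,p}(\boldsymbol{\theta}_0)$; when $p=2$ the sharper bound~(\ref{bonisbound2}) produces the explicit $K_1(\boldsymbol{\theta}_0)$ appearing in~(\ref{final_bound_regression0z}). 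For the second summand, $\boldsymbol{W}$ and $\boldsymbol{W}_0$ are defined on the same probability space and so furnish a coupling; by the definition~(\ref{pwasdefn}) of $d_{\mathrm{W}_p}$ this gives $d_{\mathrm{W}_p}(\boldsymbol{W},\boldsymbol{W}_0)\le(\E[|\boldsymbol{W}-\boldsymbol{W}_0|^p])^{1/p}$, which Minkowski's inequality bounds by $\tfrac{1}{\sqrt n}(\E[|\tilde V T\boldsymbol{Q}|^p])^{1/p}+\tfrac{1}{2\sqrt n}(\E[|\tilde V\boldsymbol{R}|^p])^{1/p}$. It then remains to show these are at most $\tfrac{1}{\sqrt n}K_{2,p}$ and $\tfrac{1}{\sqrt n}K_{3,p}$.

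For the $K_{2,p}$ term I would pass to the entrywise bound $|\tilde V T\boldsymbol{Q}|\le\sum_{k,j,l}|\tilde V_{kj}|\,|T_{lj}|\,|Q_l|$, use Minkowski to move the $L^p$ norm inside the sum, bound each $(\E[|T_{lj}|^p|Q_l|^p])^{1/p}$ by Cauchy--Schwarz as $(\E[|T_{lj}|^{2p}])^{1/(2p)}(\E[|Q_l|^{2p}])^{1/(2p)}$ (so the $2p$-moment assumptions enter here), and finally apply the power-mean inequality $(\sum_{i=1}^N a_i)^p\le N^{p-1}\sum_i a_i^p$ (valid for $p\ge2$) over the $N=d^3$ triples $(k,j,l)$ to produce the factor $d^{3-3/p}$ and the $\ell^p$-form of $K_{2,p}$. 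The $K_{3,p}$ term is handled analogously using condition (R.C.4''($p$)): each third derivative at the Lagrange point $\boldsymbol{\theta}^*$, which lies in the box with corners $\{\hat\theta_n(\boldsymbol{X})_m,\theta_{0,m}\}_{m}$, is dominated by $M_{qlj}(\boldsymbol{\theta}^*;\boldsymbol{X})$, and monotonicity of $M_{qlj}$ in each coordinate lets me replace $M_{qlj}(\boldsymbol{\theta}^*;\boldsymbol{X})$ by its maximum over the $2^d$ corners; pulling this maximum out of the $L^p$ norm via $\max_c a_c\le(\sum_c a_c^p)^{1/p}$ keeps the corner sum inside with $p$-th powers, and a power-mean inequality over the $N=d^4$ quadruples $(k,j,l,q)$ gives the factor $d^{4-4/p}$. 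The integrability in~(\ref{intconrc}) ensures finiteness throughout.

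The main obstacle, as in the $p=1$ case of Theorem~\ref{Theorem_multi}, is the correct treatment of the remainder: one must justify the multivariate Taylor expansion with derivatives evaluated at an intermediate point depending on the sample, and then convert the pointwise domination by $M_{qlj}$ at that (realization-dependent) point into an integrable quantity, which is exactly the purpose of the monotonicity hypothesis in (R.C.4''($p$)). The genuinely new bookkeeping relative to Theorem~\ref{Theorem_multi} is arranging Minkowski, Cauchy--Schwarz and the power-mean inequality with the right exponents so that the moment conditions match ($(p+2)$-moments for $\boldsymbol{\xi}_1$, $2p$-moments for $Q_l$ and $T_{lj}$) and the dimensional powers $d^{3-3/p}$ and $d^{4-4/p}$ emerge correctly.
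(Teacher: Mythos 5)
Your proposal is correct and follows essentially the same route as the paper: the same Taylor-expansion decomposition of $\boldsymbol{W}$, the triangle inequality split into a Bonis-theorem term and a same-probability-space coupling term, and then Minkowski, Cauchy--Schwarz, the power-mean inequality $\big(\sum_{j=1}^N a_j\big)^p\leq N^{p-1}\sum_j a_j^p$, and the monotonicity/corner argument for $M_{qlj}$. The only (immaterial) difference is the order of operations in bounding the coupling term — the paper applies the power-mean inequality inside the expectation before Cauchy--Schwarz, whereas you apply Minkowski and Cauchy--Schwarz first and then the equivalent $\ell^1$-to-$\ell^p$ comparison on the deterministic sums — and both orderings yield exactly the constants $K_{2,p}$ and $K_{3,p}$.
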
}

\begin{remark}\label{rem12}
{\textbf{(1)}} Let us demonstrate that the bound (\ref{final_bound_regression}) of Theorem \ref{Theorem_multi} is of the optimal order $\mathcal{O}(n^{-1/2})$; similar considerations show that the \gr{bounds of Theorems \ref{Theoremnoncan} and \ref{thmwasp} are} $\mathcal{O}(n^{-1/2})$.  Firstly, we have that for all $j=1,2,\ldots,d$, $\mathbb{E}[\xi_{1,j}^4]=\mathcal{O}(1)$, and therefore $K_1(\boldsymbol{\theta}_0)=\mathcal{O}(1)$. Here and throughout the paper, $\mathcal{O}(1)$ is understood as
smaller than a constant which does not depend on $n$, but may depend on the dimension $d$.  
Assuming that $[I(\boldsymbol{\theta}_0)]^{-1}=\mathcal{O}(1)$, we have that $\mathbb{E}[Q_l^2]=\mathcal{O}(n^{-1})$ for all $l=1,2,\ldots,d$. To see this, note that because $\boldsymbol{W}$ is asymptotically standard multivariate normally distributed, it follows that, as $n\rightarrow\infty$,
\begin{equation*}\mathrm{Cov}(\boldsymbol{W})=[I(\boldsymbol{\theta}_0)]^{1/2}\mathrm{Cov}(\hat{\boldsymbol{\theta}}_n(\boldsymbol{X}))[I(\boldsymbol{\theta}_0)]^{1/2}\rightarrow I_{d},
\end{equation*}
and therefore $\mathrm{Cov}(\hat{\boldsymbol{\theta}}_n(\boldsymbol{X}))\rightarrow \frac{1}{n}[I(\boldsymbol{\theta}_0)]^{-1}$, as $n\rightarrow\infty$, from which we read off that $\mathbb{E}[Q_l^2]=\mathcal{O}(n^{-1})$ for all $l=1,2,\ldots,d$.
%Assuming that $I(\boldsymbol{\theta}_0)=\mathcal{O}(1)$, part (2) of Remark 2.1 of \cite{a18} shows that $\mathbb{E}[Q_l^2]=\mathcal{O}(n^{-1})$ for all $l=1,2,\ldots,d$.  
Also, using condition (R.C.3) and that $\boldsymbol{X}_1,\boldsymbol{X}_2,\ldots,\boldsymbol{X}_n$ are independent we have that
\begin{equation*}\mathbb{E}[T_{lj}^2]=\sum_{i=1}^n\mathrm{Var}\bigg(\frac{\partial^2}{\partial\theta_l\partial\theta_j}\log(f(\boldsymbol{X}_i|\boldsymbol{\theta}_0))\bigg)=\mathcal{O}(n).
\end{equation*}
Therefore $K_2(\boldsymbol{\theta}_0)=\mathcal{O}(1)$.  Since $\ell(\boldsymbol{\theta};\boldsymbol{x})=\sum_{i=1}^n\log(f(\boldsymbol{x}_i|\boldsymbol{\theta}))$, we have that
$\frac{\partial^3}{\partial\theta_q\partial\theta_l\partial\theta_j}\ell(\boldsymbol{\theta};\boldsymbol{x})=\mathcal{O}(n)$ and therefore $M_{qlj}(\boldsymbol{\theta};\boldsymbol{x})=\mathcal{O}(n)$.  As we  also have that $\mathbb{E}[Q_l^2]=\mathcal{O}(n^{-1})$ (and so $\E|Q_lQ_q|=\mathcal{O}(n^{-1})$ by the Cauchy-Schwarz inequality) it seems intuitive that $\E|Q_lQ_qM_{qlj}(\boldsymbol{\tilde{\theta}};\boldsymbol{X})|=\mathcal{O}(1)$.  However, this cannot be guaranteed because $M_{qlj}(\boldsymbol{\tilde{\theta}};\boldsymbol{X})$ is random.
% but it} will often be the case without any further assumptions (as is the case in all examples of Section \ref{sec4}).  
If we additionally assume that $\mathbb{E}[Q_l^4]<\infty$ for all $l=1,2,\ldots,d$ and 
\[\mathrm{max}_{\substack{\tilde{\theta}_m \in \left\lbrace\hat{\theta}_n(\boldsymbol{X})_m, \theta_{0,m}\right\rbrace\\m \in \left\lbrace 1,2,\ldots, d\right\rbrace}}\mathbb{E}[(M_{qlj}(\tilde{\boldsymbol{\theta}};\boldsymbol{X}))^2]<\infty\] 
for all $j,l,q\in\{1,2,\ldots,d\}$ then we are guaranteed that $\E|Q_lQ_qM_{qlj}(\boldsymbol{\tilde{\theta}};\boldsymbol{X})|=\mathcal{O}(1)$, meaning that $K_3(\boldsymbol{\theta}_0)=\mathcal{O}(1)$.
This is because $M_{qlj}(\boldsymbol{\theta};\boldsymbol{x})=\mathcal{O}(n)$, and $\mathbb{E}[Q_l^4]=\mathcal{O}(n^{-2})$ for all $l=1,2,\ldots,d$, provided $[I(\boldsymbol{\theta}_0)]^{-1}=\mathcal{O}(1)$.  To see this, note that, by the asymptotic normality of the MLE, we have that, for all $l=1,2,\ldots,d$, $\hat{\theta}_n(\boldsymbol{X})_{l} - \theta_{0,l}\stackrel{d}{\rightarrow} \mathrm{N}(0,\frac{1}{n}I_*)$, as $n\rightarrow\infty$, where $I_*=\sum_{j=1}^d([I(\boldsymbol{\theta}_0)]^{-1})_{lj}$. Hence, $\mathbb{E}[Q_l^4]=\mathbb{E}[(\hat{\theta}_n(\boldsymbol{X})_{l}- \theta_{0,l})^4]\rightarrow\frac{3}{n^2}I_*^2$, as $n\rightarrow\infty$. Here we used that, for $Y\sim \mathrm{N}(0,\sigma^2)$, $\mathbb{E}[Y^4]=3\sigma^4$.
%This is because clearly $M_{qlj}(\boldsymbol{\theta};\boldsymbol{x})=\mathcal{O}(\gr{n})$, and by a similar argument to the one used in part (2) of Remark 2.1 of \cite{a18} we have that $\mathbb{E}[Q_l^4]=\mathcal{O}(n^{-2})$ for all $l=1,2,\ldots,d$. 
 Two applications of the Cauchy-Schwarz inequality then give 
\[\E\big|Q_lQ_qM_{qlj}(\boldsymbol{\tilde{\theta}};\boldsymbol{X})\big|\leq\big(\E[Q_l^4]\E[Q_q^4]\big)^{1/4}\big(\E[(M_{qlj}(\tilde{\boldsymbol{\theta}};\boldsymbol{X}))^2]\big)^{1/2}=\mathcal{O}(1).\]
Since $K_1(\boldsymbol{\theta}_0)$, $K_2(\boldsymbol{\theta}_0)$ and $K_3(\boldsymbol{\theta}_0)$ are all $\mathcal{O}(1)$ as $n\rightarrow\infty$, it follows that the bound in Theorem \ref{Theorem_multi} is $\mathcal{O}(n^{-1/2})$.

\vspace{2mm}

\noindent{{\textbf{(2)}}} In general $\ell(\boldsymbol{\theta};\boldsymbol{x})$ and $\log(f(\boldsymbol{x}_i|\boldsymbol{\theta}_0))$ will depend on the dimension $d$ (and therefore so will $\tilde{V}_{kj}$ and $M_{qlj}(\boldsymbol{\theta};\boldsymbol{x})$, for example), and therefore it is difficult to make precise general statements regarding the dependence of the bound  (\ref{final_bound_regression}) of Theorem \ref{Theorem_multi} on the dimension $d$.
%Given that $\tilde{V}$ is the square root of a $d\times d$ matrix we would typically have $\tilde{V}_{kj}=\mathcal{O}(d^{-1/2})$.   
However, it is clear that the term $K_3(\boldsymbol{\theta}_0)$ has a very poor dependence on the dimension $d$. Assuming that $M_{qlj}(\boldsymbol{\theta};\boldsymbol{x})=\mathcal{O}(1)$ and $\tilde{V}_{kj}=\mathcal{O}(1)$, we have that $K_3(\boldsymbol{\theta}_0)=\mathcal{O}(d^{4}2^d)$.

This poor dependence on the dimension is a consequence of the crude inequality (\ref{maxbound}) used in the proof of Theorem \ref{Theorem_multi} below, which we now state:
\begin{align}\label{maxbound0}\E\left|Q_lQ_qM_{qlj}(\boldsymbol{\theta}_0^*;\boldsymbol{X})\right|\leq\sum\nolimits_{\substack{\tilde{\theta}_m \in \left\lbrace\hat{\theta}_n(\boldsymbol{x})_m, \theta_{0,m}\right\rbrace\\m \in \left\lbrace 1,2,\ldots, d\right\rbrace}}\E\big|Q_lQ_qM_{qlj}(\boldsymbol{\tilde{\theta}};\boldsymbol{X})\big|,
\end{align}
where $\boldsymbol{\theta}_0^*=(\theta_{0,1}^*,\theta_{0,2}^*,\ldots,\theta_{0,d}^*)^\intercal$, and $\theta_{0,j}^*:=\theta_{0,j}^*(\boldsymbol{x})=\alpha_j\theta_{0,j}+(1-\alpha_j)\hat{\theta}(\boldsymbol{x})_j$, $\alpha_j\in(0,1)$, $j=1,2,\ldots,d$. \gr{(We also introduce the monotonicity assumption on $M_{qlj}$ to obtain inequality (\ref{maxbound}).)}  Inequality (\ref{maxbound0}) is useful in that the expectations in the sum are easier to bound directly than the quantity $\E|Q_lQ_qM_{qlj}(\boldsymbol{\theta}_0^*;\boldsymbol{X})|$, but this comes at the cost of having a sum with $2^d$ terms, resulting in a poor dependence on the dimension $d$.  \gr{However,} as is demonstrated in the examples of Section \ref{sec4}, when the number of dimensions is low, inequality (\ref{maxbound0}) (which leads to the term $K_3(\boldsymbol{\theta}_0)$) is very useful as the computation of the expectations in the sum are often straightforward.  
\end{remark}

\begin{remark}\label{remcomparison}Theorem 2.1 of \cite{ar15} gives a bounded Wasserstein bound on the distance between the distribution of the single-parameter MLE and the normal distribution, and Theorem 2.1 of \cite{a18} gives a bound on the distance between the distribution of multi-parameter MLE and the multivariate normal distribution with respect to the $d_{0,1,2,3}$ metric.  Both bounds are of the optimal $\mathcal{O}(n^{-1/2})$ order.  We now give further comparisons between our bounds and those of \cite{ar15} and \cite{a18}.

Theorem 2.1 of \cite{ar15} holds under the same regularity conditions as our Theorem \ref{Theorem_multi}, but with condition (R.C.4') instead of (R.C.4'')
%; neither of these conditions are restrictive.  
Condition (R.C.4') introduces a constant $\epsilon$.  This causes two complications in the bound of \cite{ar15}. Firstly, some additional conditional expectations (which involve $\epsilon$) must be estimated; secondly, $\epsilon$ appears in other terms in the bound and so in applications of the bound $\epsilon$ must later be optimised.  Our bound (\ref{boundTHEOREM}) has no such complications and in most applications we would expect that the expectations that must be estimated in our bound are easier to work with than those of \cite{ar15}, and ultimately lead to better bounds (even when given in a stronger metric).  Indeed, in Section \ref{sec:one_parameter} we apply Theorem \ref{Theorem_multi} to derive \gr{1-}Wasserstein distance bounds for the normal approximation of the MLE of the exponential distribution in the canonical and non-canonical parametrisations, and we find that in both cases our bounds outperform those that were obtained by \cite{ar15}.

Theorem 2.1 of \cite{a18} also holds under the same regularity conditions as our Theorem \ref{Theorem_multi}, but with condition (R.C.4') instead of (R.C.4'').  The bound of \cite{a18} therefore has similar complications to the bound of \cite{ar15}, and overall the bound of \cite{a18} takes a more complicated form than our bound (\ref{final_bound_regression}) in Theorem \ref{Theorem_multi}.  For small dimension $d$, we would therefore expect our bound to be preferable to that of \cite{a18} and lead to better bounds in applications.  However, as noted in Remark \ref{rem12}, the term $K_3(\boldsymbol{\theta}_0)$ of bound (\ref{final_bound_regression}) has a very poor dependence on the dimension $d$; much worse than the bound of \cite{a18}.  In applications in which the dependence on the dimension is more important than the choice of metric, the bound of \cite{a18} may \gr{therefore} be preferable to our bound (\ref{final_bound_regression}).
%; although, in such cases we could replace $K_3(\boldsymbol{\theta}_0)$ by $K_3'(\boldsymbol{\theta}_0)$, in which case the modified bound (\ref{final_bound_regression}) would have a similar  -- and mostly likely better -- dependence on the dimension $d$ than the bound of \cite{a18}.
\end{remark}

%\begin{remark}The proof of Theorem \ref{Theorem_multi} involves writing the MLE as a sum of i.i.d$.$ random vectors plus a remainder term.  We apply Theorem \ref{bonisthm}, which we have taken from \cite{bonis}, to bound the Wasserstein distance between the sum of i.i.d$.$ random vectors and the multivariate normal distribution.  The result due to \cite{bonis} is given in the stronger 2-Wasserstein distance, but we instead apply the bound in the 1-Wasserstein distance. We do this because controlling the remainder term relies heavily on the representation of the 1-Wasserstein distance as an integral probability metric with $d_{\mathrm{W}}(\boldsymbol{X},\boldsymbol{Y})=\sup_{h\in\mathcal{H}_{\mathrm{W}}} |\mathbb{E}[h(\boldsymbol{X})]-\mathbb{E}[h(\boldsymbol{Y})]|$, for which no such representation is available for the 2-Wasserstein metric. It does therefore not seem to be possible to adapt our proof strategy in a straightforward manner to give bounds for the multivariate normal approximation of the MLE in the stronger 2-Wasserstein distance.
%\end{remark}

\noindent{\emph{Proof of Theorem \ref{Theorem_multi}.}} By the triangle inequality we have that
\begin{align}
\nonumber  d_{\mathrm{W}}(\boldsymbol{W}, \boldsymbol{Z})
& \leq d_{\mathrm{W}}\bigg(\frac{1}{\sqrt{n}}\tilde{V}\nabla\left(\ell(\boldsymbol{\theta}_0;\boldsymbol{X})\right), \boldsymbol{Z}\bigg) + d_{\mathrm{W}}\bigg(\boldsymbol{W}, \frac{1}{\sqrt{n}}\tilde{V}\nabla\left(\ell\left(\boldsymbol{\theta}_0;\boldsymbol{X}\right)\right)\bigg)\\
\label{r111}&=:R_1+R_2.
\end{align}
We now proceed to find upper bounds for the terms $R_1$ and $R_2$.

The term $R_1$ is readily bounded by an application of Theorem \ref{bonisthm}. We have $\nabla\left(\ell\left(\boldsymbol{\theta}_0;\boldsymbol{X}\right)\right) = \sum_{i=1}^{n}\nabla\left(\log \left(f(\boldsymbol{X}_i|\boldsymbol{\theta}_0)\right)\right)$ and we can write $\boldsymbol{S}  = \frac{1}{\sqrt{n}}\sum_{i=1}^{n}\boldsymbol{\xi}_i,$ for $\boldsymbol{\xi}_i = \tilde{V}\nabla\left(\log\left(f(\boldsymbol{X}_i|\boldsymbol{\theta}_0)\right)\right)$, $i=1,2,\ldots,n$, being i.i.d$.$ random vectors in $\mathbb{R}^{d}$. From the regularity condition (R.C.3), it follows that $\E[\boldsymbol{\xi}_1] = \boldsymbol{0}$. In addition, using (R.C.3), we have that due to the symmetry of $\tilde{V}$,
\begin{equation}
\nonumber {\rm Var}\left(\boldsymbol{S}\right) = \frac{1}{n}\tilde{V}\sum_{i=1}^{n}\left\lbrace {\rm Var}\left(\nabla(\log(f(\boldsymbol{X}_i|\boldsymbol{\theta}_0)))\right)\right\rbrace\tilde{V}  = \tilde{V}I(\boldsymbol{\theta}_0)\tilde{V} = I_{d}.
\end{equation}
Therefore, from Theorem \ref{bonisthm} \gr{(using that $d_{\mathrm{W}}\leq d_{\mathrm{W}_2}$)} we have that
\begin{equation*}
R_1 \leq \frac{14d^{5/4}}{\sqrt{n}}\max_{1\leq j \leq d}\sqrt{\E[\xi_{1,j}^4]}=\frac{K_1(\boldsymbol{\theta}_0)}{\sqrt{n}},
\end{equation*}
where $\xi_{1j} = \sum_{k=1}^{d}\tilde{V}_{j,k}\frac{\partial}{\partial\theta_k}\left(\log\left(f(\boldsymbol{X}_1|\boldsymbol{\theta}_0)\right)\right)$.

Now we turn our attention to the more involved part of the proof, that of bounding $R_2$.  We begin by obtaining a useful expression for $\boldsymbol{W}=\sqrt{n}[I(\boldsymbol{\theta}_0)]^{1/2}(\hat{\boldsymbol{\theta}}_n(x) - \boldsymbol{\theta}_0)$.
From the definition of the MLE we have that $\frac{\partial}{\partial\theta_k}\ell(\hat{\boldsymbol{\theta}}_n(\boldsymbol{x});\boldsymbol{x}) = 0$ for all $k=1,2,\ldots,d$. A second order Taylor expansion of $\frac{\partial}{\partial\theta_k}\ell(\hat{\boldsymbol{\theta}}_n(\boldsymbol{x}); \boldsymbol{x})$ around $\boldsymbol{\theta}_0$ gives that
\begin{equation}
\label{Taylor1}
\sum_{j=1}^{d}Q_j\frac{\partial^2}{\partial\theta_k\partial\theta_j}\ell(\boldsymbol{\theta}_0;\boldsymbol{x}) = -\frac{\partial}{\partial\theta_k}\ell(\boldsymbol{\theta}_0;\boldsymbol{x}) - \frac{1}{2}\sum_{j=1}^{d}\sum_{q=1}^{d}Q_jQ_q\frac{\partial^3}{\partial\theta_k\partial\theta_j\partial\theta_q}\ell\left(\boldsymbol{\theta};\boldsymbol{x}\right)\Big|_{\substack{\boldsymbol{\theta} = \boldsymbol{\theta}_0^{*}}}.
\end{equation}
Here $\boldsymbol{\theta}_0^*=(\theta_{0,1}^*,\theta_{0,2}^*,\ldots,\theta_{0,d}^*)^\intercal$, where $\theta_{0,j}^*:=\theta_{0,j}^*(\boldsymbol{x})=\alpha_j\theta_{0,j}+(1-\alpha_j)\hat{\theta}(\boldsymbol{x})_j$, $\alpha_j\in(0,1)$, $j=1,2,\ldots,d$. Adding now $\sum_{j=1}^{d}n[I(\boldsymbol{\theta}_0)]_{kj}Q_j$ on both sides of \eqref{Taylor1}, we obtain
\begin{equation}
\label{Taylor2}
\nonumber \sum_{j=1}^{d}n[I(\boldsymbol{\theta}_0)]_{kj}Q_j = \frac{\partial}{\partial \theta_k}\ell(\boldsymbol{\theta}_0;\boldsymbol{x}) + \sum_{j=1}^{d}Q_jT_{kj} + \frac{1}{2}\sum_{j=1}^{d}\sum_{q=1}^{d}Q_jQ_q\frac{\partial^3}{\partial\theta_k\partial\theta_j\partial\theta_q}\ell\left(\boldsymbol{\theta};\boldsymbol{x}\right)\Big|_{\substack{\boldsymbol{\theta} = \boldsymbol{\theta}_0^{*}}}.
\end{equation}
The equality above holds for all $k = 1,2,\ldots, d$, which means that, for $\left[I(\boldsymbol{\theta}_0)\right]_{[j]}$ denoting the $j$-th column of the matrix $I(\boldsymbol{\theta}_0)$,
\begin{align}
\label{Taylor_multi}
\nonumber\boldsymbol{W}&= \sqrt{n}[I(\boldsymbol{\theta}_0)]^{1/2}\big(\hat{\boldsymbol{\theta}}_n(\boldsymbol{x}) - \boldsymbol{\theta}_0\big)\\
 \nonumber&=  \frac{1}{\sqrt{n}}\tilde{V}\bigg\{\nabla\left(\ell\left(\boldsymbol{\theta}_0;\boldsymbol{x}\right)\right) + \sum_{j=1}^{d}Q_j\left(\nabla\left(\frac{\partial}{\partial\theta_j}\ell(\boldsymbol{\theta}_0;\boldsymbol{x})\right) + n[I(\boldsymbol{\theta}_0)]_{[j]}\right)\\
& \qquad\quad\quad +  \frac{1}{2}\sum_{j=1}^{d}\sum_{q=1}^{d}Q_jQ_q\nabla\left(\frac{\partial^2}{\partial\theta_j\partial\theta_q}\ell\left(\boldsymbol{\theta};\boldsymbol{x}\right)\Big|_{\substack{\boldsymbol{\theta} = \boldsymbol{\theta_0^{*}}}}\right)\bigg\},
\end{align}
where we multiplied both sides by $\frac{1}{\sqrt{n}}[I(\boldsymbol{\theta}_0)]^{-1/2} = \frac{1}{\sqrt{n}}\tilde{V}$.

Now, from the \gr{integral probability metric representation of the 1-Wasserstein} distance we have that
\begin{equation*}R_2=\sup_{h\in\mathcal{H}_{\mathrm{W}}}|\E[h(\boldsymbol{W})]-\E[h(n^{-1/2}\tilde{V}\nabla\left(\ell(\boldsymbol{\theta}_0;\boldsymbol{X})\right))]|\gr{.}
\end{equation*}
Let $h\in\mathcal{H}_{\mathrm{W}}$.  Then, by \eqref{Taylor_multi},
\begin{align*}
&\big|\E[h(\boldsymbol{W})]-\E[h(n^{-1/2}\tilde{V}\nabla\left(\ell(\boldsymbol{\theta}_0;\boldsymbol{X})\right))]\big| \\
&\leq \|h\|_{\mathrm{Lip}}\E\bigg|\frac{1}{\sqrt{n}}\tilde{V}\bigg\{ \sum_{j=1}^{d}Q_j\left(\nabla\left(\frac{\partial}{\partial\theta_j}\ell(\boldsymbol{\theta}_0;\boldsymbol{x})\right) + n[I(\boldsymbol{\theta}_0)]_{[j]}\right)\\
& \qquad\quad\quad +  \frac{1}{2}\sum_{j=1}^{d}\sum_{q=1}^{d}Q_jQ_q\nabla\left(\frac{\partial^2}{\partial\theta_j\partial\theta_q}\ell\left(\boldsymbol{\theta};\boldsymbol{x}\right)\Big|_{\substack{\boldsymbol{\theta} = \boldsymbol{\theta_0^{*}}}}\right)\bigg\}\bigg|,
\end{align*}
and, by the triangle inequality,
\begin{align}
\label{result_term_2}
\nonumber &\big|\E[h(\boldsymbol{W})]-\E[h(n^{-1/2}\tilde{V}\nabla\left(\ell(\boldsymbol{\theta}_0;\boldsymbol{X})\right))]\big| \\
\nonumber  & \leq \frac{\|h\|_{\mathrm{Lip}}}{\sqrt{n}}\bigg\{\sum_{k=1}^{d}\sum_{j=1}^{d}\sum_{l=1}^{d}|\tilde{V}_{kj}|\mathbb{E}|Q_lT_{lj}| + \frac{1}{2}\sum_{k=1}^{d}\sum_{j=1}^{d}|\tilde{V}_{kj}|\sum_{l=1}^{d}\sum_{q=1}^{d}\E\bigg|Q_lQ_q\frac{\partial^3}{\partial\theta_q\partial\theta_l\partial\theta_j}\ell\left(\boldsymbol{\theta};\boldsymbol{X}\right)\Big|_{\substack{\boldsymbol{\theta} = \boldsymbol{\theta}_0^{*}}}\bigg|\bigg\} \\
%\nonumber & =\frac{\|h\|_{\mathrm{Lip}}}{\sqrt{n}}\sum_{k=1}^{d}\sum_{j=1}^{d}|\tilde{V}_{kj}|\bigg\{\sum_{l=1}^{d}\E\left|Q_lT_{lj}\right| + \frac{1}{2}\sum_{l=1}^{d}\sum_{q=1}^d\E\left|Q_lQ_q\frac{\partial^3}{\partial\theta_q\partial\theta_l\partial\theta_j}\ell\left(\boldsymbol{\theta};\boldsymbol{X}\right)\Big|_{\substack{\boldsymbol{\theta} = \boldsymbol{\theta}_0^{*}}}\right|\bigg\}\\
 & \leq \frac{\|h\|_{\mathrm{Lip}}}{\sqrt{n}}\sum_{k=1}^{d}\sum_{j=1}^{d}|\tilde{V}_{kj}|\bigg\{\sum_{l=1}^{d}\sqrt{\E[Q_l^2]}\sqrt{\E[T_{lj}^2]} + \frac{1}{2}\sum_{l=1}^{d}\sum_{q=1}^d\E\left|Q_lQ_qM_{qlj}(\boldsymbol{\theta}_0^*;\boldsymbol{X})\right|\bigg\},
\end{align}
where $M_{qlj}(\boldsymbol{\theta};\boldsymbol{x})$ is as in the condition (R.C.4''). In obtaining the final inequality we used the Cauchy-Schwarz inequality.

% and in obtaining the final inequality we applied Lemma \ref{lemmon}.  

Let us now focus on bounding $\E|Q_lQ_qM_{qlj}(\boldsymbol{\theta}_0^*;\boldsymbol{X})|$.
As $M_{qlj}$ is a monotonic function in the sense defined in (\ref{monotonicity}), we have that, for all $\boldsymbol{x}\in\mathbb{X}$,
\begin{equation*}M_{qlj}(\boldsymbol{\theta}_0^*(\boldsymbol{x});\boldsymbol{x})\leq \mathrm{max}_{\substack{\tilde{\theta}_m \in \left\lbrace\hat{\theta}_n(\boldsymbol{x})_m, \theta_{0,m}\right\rbrace\\m \in \left\lbrace 1,2,\ldots, d\right\rbrace}}M_{qlj}(\boldsymbol{\tilde{\theta}};\boldsymbol{x}).
\end{equation*}
Therefore
\begin{align}\E\left|Q_lQ_qM_{qlj}(\boldsymbol{\theta}_0^*;\boldsymbol{X})\right|&\leq \E\bigg|Q_lQ_q\mathrm{max}_{\substack{\tilde{\theta}_m \in \left\lbrace\hat{\theta}_n(\boldsymbol{\gr{X}})_m, \theta_{0,m}\right\rbrace\\m \in \left\lbrace 1,2,\ldots, d\right\rbrace}}M_{qlj}(\boldsymbol{\tilde{\theta}};\boldsymbol{X})\bigg|\nonumber\\
&\leq\E\bigg|Q_lQ_q\sum\nolimits_{\substack{\tilde{\theta}_m \in \left\lbrace\hat{\theta}_n(\boldsymbol{\gr{X}})_m, \theta_{0,m}\right\rbrace\\m \in \left\lbrace 1,2,\ldots, d\right\rbrace}}M_{qlj}(\boldsymbol{\tilde{\theta}};\boldsymbol{X})\bigg|\nonumber\\
\label{maxbound}&=\sum\nolimits_{\substack{\tilde{\theta}_m \in \left\lbrace\hat{\theta}_n(\boldsymbol{\gr{X}})_m, \theta_{0,m}\right\rbrace\\m \in \left\lbrace 1,2,\ldots, d\right\rbrace}}\E\big|Q_lQ_qM_{qlj}(\boldsymbol{\tilde{\theta}};\boldsymbol{X})\big|.
\end{align}
Applying inequality (\ref{maxbound}) to (\ref{result_term_2}) gives the bound
\begin{align}\nonumber &\big|\E[h(\boldsymbol{W})]-\E[h(n^{-1/2}\tilde{V}\nabla\left(\ell(\boldsymbol{\theta}_0;\boldsymbol{X})\right))]\big| \\
\nonumber & \leq \frac{\|h\|_{\mathrm{Lip}}}{\sqrt{n}}\sum_{k=1}^{d}\sum_{j=1}^{d}|\tilde{V}_{kj}|\Bigg\{\sum_{l=1}^{d}\sqrt{\E[Q_l^2]}\sqrt{\E[T_{lj}^2]} + \frac{1}{2}\sum_{l=1}^{d}\sum_{q=1}^d\sum_{\substack{\tilde{\theta}_m \in \left\lbrace\hat{\theta}_n(\boldsymbol{X})_m, \theta_{0,m}\right\rbrace\\m \in \left\lbrace 1,2,\ldots, d\right\rbrace}}\E\big|Q_lQ_qM_{qlj}(\boldsymbol{\tilde{\theta}};\boldsymbol{X})\big|\Bigg\}\\
\nonumber&=\frac{\|h\|_{\mathrm{Lip}}}{\sqrt{n}}\big(K_2(\boldsymbol{\theta}_0)+K_3(\boldsymbol{\theta}_0)\big),
\end{align}
Since $h\in\mathcal{H}_{\mathrm{W}}$ we have that $\|h\|_{\mathrm{Lip}}\leq1$, and therefore $R_2\leq \frac{1}{\sqrt{n}}\big(K_2(\boldsymbol{\theta}_0)+K_3(\boldsymbol{\theta}_0)\big)$.
Finally, combining our bounds for $R_1$ and $R_2$ yields inequality \eqref{final_bound_regression}. \hfill $\Box$

\vspace{3mm}

\noindent{\emph{Proof of Theorem \ref{Theoremnoncan}}.}  The proof is exactly the same as that of Theorem \ref{Theorem_multi} with the exception that the term $R_1$ in (\ref{r111}) is bounded using Theorem \ref{thmapw}, rather than Theorem \ref{bonisthm}. \hfill $\Box$

\vspace{3mm}

\gr{\noindent{\emph{Proof of Theorem \ref{thmwasp}}.} The proof is similar to that of Theorem \ref{Theorem_multi}. Let $p\geq2$. By the triangle inequality we have that
\begin{align}
\nonumber  d_{\mathrm{W}_p}(\boldsymbol{W}, \boldsymbol{Z})
& \leq d_{\mathrm{W}_p}\bigg(\frac{1}{\sqrt{n}}\tilde{V}\nabla\left(\ell(\boldsymbol{\theta}_0;\boldsymbol{X})\right), \boldsymbol{Z}\bigg) + d_{\mathrm{W}_p}\bigg(\boldsymbol{W}, \frac{1}{\sqrt{n}}\tilde{V}\nabla\left(\ell\left(\boldsymbol{\theta}_0;\boldsymbol{X}\right)\right)\bigg)\\
&=:R_{1,p}+R_{2,p}.\nonumber
\end{align}
The term $R_{1,p}$ can be bounded similarly to how we bounded $R_1$ in the proof of Theorem \ref{Theorem_multi}. In the case $p=2$ we obtain the same bound $K_1(\boldsymbol{\theta}_0)$ for $R_{1,2}$, and for the case $p\geq2$ the only way our argument changes is that we apply inequality (\ref{bonisbound200}), rather than inequality (\ref{bonisbound2}).

To bound $R_{2,p}$, we note that the random vectors $\boldsymbol{W}$ and $\frac{1}{\sqrt{n}}\tilde{V}\nabla\left(\ell\left(\boldsymbol{\theta}_0;\boldsymbol{X}\right)\right)$ are defined on the same probability space and thus provide a coupling of them. It therefore follows from the definition of the $p$-Wasserstein distance that
\begin{equation*}R_{2,p}=d_{\mathrm{W}_p}\bigg(\boldsymbol{W}, \frac{1}{\sqrt{n}}\tilde{V}\nabla\left(\ell\left(\boldsymbol{\theta}_0;\boldsymbol{X}\right)\right)\bigg)\leq\Bigg(\mathbb{E}\bigg[\bigg|\boldsymbol{W}-\frac{1}{\sqrt{n}}\tilde{V}\nabla\left(\ell\left(\boldsymbol{\theta}_0;\boldsymbol{X}\right)\right)\bigg|^p\bigg]\Bigg)^{1/p}.
\end{equation*}
Substituting (\ref{Taylor_multi}) into this bound and using the triangle inequality now gives that
\begin{align*}R_{2,p}&\leq\frac{1}{\sqrt{n}}\Bigg(\mathbb{E}\bigg[\bigg(\sum_{k=1}^d\sum_{j=1}^d\sum_{l=1}^d|\tilde{V}_{kj}Q_lT_{lj}|\\
&\quad+\frac{1}{2}\sum_{k=1}^d\sum_{j=1}^d\sum_{l=1}^d\sum_{q=1}^d\bigg|\tilde{V}_{kj}Q_lQ_q\frac{\partial^3}{\partial\theta_q\partial\theta_l\partial\theta_j}\ell\left(\boldsymbol{\theta}_0^{*};\boldsymbol{X}\right)\bigg|\bigg)^p\bigg]\Bigg)^{1/p}\\
&\leq\frac{1}{\sqrt{n}}\Bigg\{\Bigg(\mathbb{E}\bigg[\bigg(\sum_{k=1}^d\sum_{j=1}^d\sum_{l=1}^d|\tilde{V}_{kj}Q_lT_{lj}|\bigg)^p\bigg]\Bigg)^{1/p}\\
&\quad+\frac{1}{2}\Bigg(\mathbb{E}\bigg[\bigg(\sum_{k=1}^d\sum_{j=1}^d\sum_{l=1}^d\sum_{q=1}^d\bigg|\tilde{V}_{kj}Q_lQ_q\frac{\partial^3}{\partial\theta_q\partial\theta_l\partial\theta_j}\ell\left(\boldsymbol{\theta}_0^{*};\boldsymbol{X}\right)\bigg|\bigg)^p\bigg]\Bigg)^{1/p}\Bigg\}.
\end{align*} 
We now apply the inequality  $\big(\sum_{j=1}^d a_j\big)^{r}\leq d^{r-1}\sum_{j=1}^da_j^r$, where $a_1,\ldots,a_d\geq0$ and $r\geq2$, to get 
\begin{align*}R_{2,p}&\leq \frac{1}{\sqrt{n}}\Bigg\{d^{3-3/p}\Bigg(\sum_{k=1}^d\sum_{j=1}^d|\tilde{V}_{kj}|^p\sum_{l=1}^d\mathbb{E}\big[|Q_lT_{lj}|^p\big]\Bigg)^{1/p}\\
&\quad+\frac{d^{4-4/p}}{2}\Bigg(\sum_{k=1}^d\sum_{j=1}^d|\tilde{V}_{kj}|^p\sum_{l=1}^d\sum_{q=1}^d\mathbb{E}\bigg[\bigg|Q_lQ_q\frac{\partial^3}{\partial\theta_q\partial\theta_l\partial\theta_j}\ell\left(\boldsymbol{\theta}_0^{*};\boldsymbol{X}\right)\bigg|^p\bigg]\Bigg)^{1/p}\Bigg\}\\
&\leq\frac{1}{\sqrt{n}}\Bigg\{d^{3-3/p}\Bigg(\sum_{k=1}^{d}\sum_{j=1}^{d}|\tilde{V}_{kj}|^p\sum_{l=1}^{d}\sqrt{\E[|Q_l|^{2p}]}\sqrt{\E[|T_{lj}|^{2p}]}\Bigg)^{1/p}\\
&\quad+\frac{d^{4-4/p}}{2}\Bigg(\sum_{k=1}^{d}\sum_{j=1}^{d}|\tilde{V}_{kj}|^p\sum_{l=1}^{d}\sum_{q=1}^d\sum_{\substack{\tilde{\theta}_m \in \left\lbrace\hat{\theta}_n(\boldsymbol{X})_m, \theta_{0,m}\right\rbrace\\m \in \left\lbrace 1,2,\ldots, d\right\rbrace}}\E\big[\big|Q_lQ_qM_{qlj}(\boldsymbol{\tilde{\theta}};\boldsymbol{X})\big|^p\big]\bigg)^{1/p}\Bigg\}\\
&=\frac{1}{\sqrt{n}}\big(K_{2,p}(\boldsymbol{\theta}_0)+K_{3,p}(\boldsymbol{\theta}_0)\big),
\end{align*}
where in obtaining the second inequality we used the Cauchy-Schwarz inequality and a similar argument to the one used to obtain inequality (\ref{maxbound}). Summing up our bounds for $R_{1,p}$ and $R_{2,p}$, in the cases $p\geq2$ and $p=2$, yields the desired bounds (\ref{final_bound_regression0}) and (\ref{final_bound_regression0z}), respectively. \hfill $\Box$}

%\subsection{Wasserstein distance bounds when the MLE is unknown}\label{sec3.2}
%{\textcolor{blue}{Notes for this: This will be similar to Anastasiou (2018). I can use results from there, explain the steps, give the general upper bound with the two extra conditions, and then give the example of the Beta distribution as well but with the bound for the MLE and not the bound for U1, which is basically a bound for the MSE. This is fine to be a subsection but it cannot be a separate section in my opinion.}}
\section{Examples}\label{sec4}

In this section, we apply the general theorems of Section \ref{sec3} to obtain explicit optimal $\mathcal{O}(n^{-1/2})$ Wasserstein distance bounds for the multivariate normal approximation of the MLE in several important settings.  Each of the examples given is of interest in its own right and taken together the examples provide a useful demonstration of the application of the general theorems to derive explicit bounds for particular MLEs of interest. \gr{Our focus in this section is mostly on obtaining bounds with respect to the 1-Wasserstein metric, although we do derive some bounds with respect to the $2$-Wasserstein metric. It should be noted, however, that $p$-Wasserstein ($p\geq1$) analogues of each of the bounds derived in this section can be obtained through an application of Theorem \ref{thmwasp}; see Corollary \ref{corollaryexponential} for a 2-Wasserstein distance bound for the normal approximation of the exponential distribution under canonical parametrisation.} \aaa{In Section Proposition \ref{prop_implicit_MLE} we provide an upper bound \gdr{with respect to the bounded Wasserstein distance} for cases where the MLE cannot be expressed analytically.}

\subsection{Single-parameter exponential families}
\label{sec:one_parameter}
%This section specifies Theorem \ref{Theoremnoncan} for single-parameter exponential family distributions. Many popular distributions which have the same underlying structure based on relatively simple properties are exponential families, such as the normal, Gamma and Beta distributions. 
%Generalisations of exponential families can be found in \cite{Steffen} and \cite{Berk}.

%For now, we focus on the simplest case of a scalar parameter. 
The distribution of a random variable, $X$, is said to be a \emph{single-parameter exponential family distribution} if the probability density (or mass) function is of the form
\begin{equation}
\label{density_exponential_family}
f(x|\theta) = {\exp}\left\lbrace k(\theta)T(x) - A(\theta) + S(x)\right\rbrace\mathbf{1}_{\{x \in B\}},
\end{equation}
where the set $B = \left\lbrace x:f(x|\theta)>0 \right\rbrace$ is the support of $X$ and does not depend on $\theta$; $k(\theta)$ and $A(\theta)$ are functions of the parameter; $T(x)$ and $S(x)$ are functions only of the data.  Many popular distributions are members of the exponential family, including the normal, gamma and beta distributions.

 The choice of the functions $k(\theta)$ and $T(X)$ is not unique. If $k(\theta) = \theta$ we have the so-called \emph{canonical case}. In this case $\theta$ and $T(X)$ are called the \emph{natural} \emph{parameter} and \emph{natural} \emph{observation} \cite{cb02}.  %In this paper we shall consider both canonical and non-canonical parametrisations.  
 It is often of interest to work under the canonical parametrisation due to appealing theoretical properties that can, for example, simplify the theory and computational complexity in generalised
linear models.  In fact, as noted in Remark \ref{REMARK_CANONICAL_R2} below, our general (\ref{noncanexponential}) bound in Corollary \ref{Theoremnoncanexp} for the normal approximation of the MLE for exponential family distributions simplifies in the canonical case.  Canonical parametrisations are important in, amongst other examples, Gaussian graphical models \cite{l96} and precision matrix estimation \cite{m18}.

%The following corollary of Theorem \ref{Theoremnoncan} gives an upper bound on the Wasserstein distance between the distribution of the MLE  and the normal distribution when the data are from an exponential family distribution.
\gr{\begin{corollary}
\label{Theoremnoncanexp}
Let $X_1, X_2, \ldots, X_n$ be i.i.d$.$ random variables with the probability density (or mass) function of a single-parameter exponential family distribution, as given in (\ref{density_exponential_family}). Assume that (R1)--(R3) are satisfied and that the MLE exists. Assuming that $k'(\theta_{0}) \neq 0$ and denoting by $D(\theta) = \frac{A'(\theta)}{k'(\theta)}$, then with $W=\sqrt{n\:i(\theta_0)}(\hat{\theta}_n(\boldsymbol{x})- \theta_0)$ and $Z \sim {\rm N}(0,1)$, it holds that

\vspace{3mm}

\noindent{\textbf{(1)}} If (R.C.4'') is satisfied and for $M(\theta,\boldsymbol{x})$ as in (R.C.4''), then
\begin{align}
\label{noncanexponential}
\nonumber & d_{\mathrm{W}}(W,Z) \leq \frac{1}{\sqrt{n}}\bigg[2 + \frac{{\EE}[|T(X_1)-D(\theta_0)|^3]}{\left[{\rm Var}(T(X_1))\right]^{3/2}} + \frac{|k''(\theta_0)|}{\sqrt{i(\theta_0)}}\sqrt{n{\rm Var}\left(T(X_1)\right)}\sqrt{{\EE}\big[(\hat{\theta}_n(\boldsymbol{X})- \theta_0)^2\big]}\\
&\quad + \frac{1}{2\sqrt{i(\theta_0)}}\Big(\EE\big|(\hat{\theta}_n(\boldsymbol{X}) - \theta_0)^2M(\theta_0;\boldsymbol{X})\big|+ \EE\big|(\hat{\theta}_n(\boldsymbol{X}) - \theta_0)^2M(\hat{\theta}_n(\boldsymbol{X});\boldsymbol{X})\big|\Big)\bigg].
\end{align}
\noindent{\textbf{(2)}} If (R.C.4''(2)) is satisfied and for $M(\theta,\boldsymbol{x})$ as in (R.C.4''(2)), then
\begin{align}
\label{noncanexponential2}
\nonumber & d_{\mathrm{W}_2}(W,Z) \leq \frac{1}{\sqrt{n}}\bigg[\frac{14}{i(\theta_0)}[k'(\theta_0)]^2\sqrt{\E\left[(T(X_1) - D(\theta_0))^4\right]}\\
\nonumber & \quad + \frac{|k''(\theta_0)|}{\sqrt{i(\theta_0)}}\Big(\E\big[(\hat{\theta}_n(\boldsymbol{X}) - \theta_0)^4\big]\Big)^{1/4}\bigg(\E\bigg[\bigg(\sum_{i=1}^n\lbrace T(X_i) - \E[T(X_i)]\rbrace\bigg)^4\bigg]\bigg)^{1/4}\\
&\quad + \frac{1}{2\sqrt{i(\theta_0)}}\Big(\E\big[(\hat{\theta}_n(\boldsymbol{X}) - \theta_0)^4(M(\theta_0;\boldsymbol{X}))^2\big] + \E\big[(\hat{\theta}_n(\boldsymbol{X}) - \theta_0)^4(M(\hat{\theta}_n(\boldsymbol{X});\boldsymbol{X}))^2\big]\Big)^{1/2}\bigg].
\end{align}
In both {\textbf{(1)}} and {\textbf{(2)}} above, $i(\theta_0) = {\rm Var}\left(\frac{\mathrm{d}}{\mathrm{d}\theta} \log f(X_1|\theta_0)\right) = [k'(\theta_0)]^2{\rm Var}(T(X_1)) > 0$.
\end{corollary}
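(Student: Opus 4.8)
The plan is to specialise the general single-parameter bounds to the exponential family form (\ref{density_exponential_family}): part \textbf{(1)} will follow from Theorem \ref{Theoremnoncan}, and part \textbf{(2)} from Theorem \ref{thmwasp} in the case $p=2$, $d=1$ (inequality (\ref{final_bound_regression0z})). The entire argument amounts to computing the log-density derivatives that appear in these general bounds and then simplifying. Writing $\log f(x|\theta)=k(\theta)T(x)-A(\theta)+S(x)$, I would first record
\begin{equation*}
\frac{\mathrm{d}}{\mathrm{d}\theta}\log f(x|\theta)=k'(\theta)T(x)-A'(\theta),\qquad \frac{\mathrm{d}^2}{\mathrm{d}\theta^2}\log f(x|\theta)=k''(\theta)T(x)-A''(\theta),
\end{equation*}
and note that the third derivative equals $k'''(\theta)T(x)-A'''(\theta)$, whose absolute value is dominated by the function $M(\theta;\boldsymbol{x})$ appearing in the relevant univariate condition (R.C.4'') or (R.C.4''(2)).

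From regularity condition (R.C.2) applied at $\theta_0$ we obtain $k'(\theta_0)\E[T(X_1)]=A'(\theta_0)$, hence $\E[T(X_1)]=D(\theta_0)$. Consequently the score centres as $\frac{\mathrm{d}}{\mathrm{d}\theta}\log f(X_1|\theta_0)=k'(\theta_0)(T(X_1)-D(\theta_0))$, which immediately yields $i(\theta_0)=\Var\big(\frac{\mathrm{d}}{\mathrm{d}\theta}\log f(X_1|\theta_0)\big)=[k'(\theta_0)]^2\Var(T(X_1))$, as asserted at the end of the statement. Substituting these expressions into the bound (\ref{boundTHEOREM}) of Theorem \ref{Theoremnoncan} establishes part \textbf{(1)}: the factor $|k'(\theta_0)|^3$ arising from $\E\big[|\frac{\mathrm{d}}{\mathrm{d}\theta}\log f(X_1|\theta_0)|^3\big]$ cancels against $[i(\theta_0)]^{3/2}=|k'(\theta_0)|^3[\Var(T(X_1))]^{3/2}$ to give the second term, while $\Var\big(\frac{\mathrm{d}^2}{\mathrm{d}\theta^2}\log f(X_1|\theta_0)\big)=[k''(\theta_0)]^2\Var(T(X_1))$ produces the $|k''(\theta_0)|$ factor in the third term; the $K_3$-type term is inherited verbatim.

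For part \textbf{(2)} I would invoke (\ref{final_bound_regression0z}) with $d=1$, so the dimensional prefactors $d^{5/4}$, $d^{3-3/p}$ and $d^{4-4/p}$ all equal $1$, and with $\tilde{V}_{1,1}=[i(\theta_0)]^{-1/2}$. The term $K_1(\theta_0)=14\sqrt{\E[\xi_{1,1}^4]}$ is evaluated from $\xi_{1,1}=[i(\theta_0)]^{-1/2}k'(\theta_0)(T(X_1)-D(\theta_0))$, yielding the first term of (\ref{noncanexponential2}); and $K_{2,2}$, $K_{3,2}$ are read off using $Q_1=\hat{\theta}_n(\boldsymbol{X})-\theta_0$. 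The one computation that requires care — and the main, though still elementary, obstacle — is the identification of $T_{1,1}$. By definition (\ref{cm}), $T_{1,1}=\frac{\partial^2}{\partial\theta^2}\ell(\theta_0;\boldsymbol{X})+n\,i(\theta_0)$; combining $\frac{\partial^2}{\partial\theta^2}\ell(\theta_0;\boldsymbol{X})=k''(\theta_0)\sum_{i=1}^nT(X_i)-nA''(\theta_0)$ with the Fisher information identity $n\,i(\theta_0)=-\E\big[\frac{\partial^2}{\partial\theta^2}\ell(\theta_0;\boldsymbol{X})\big]=n\big(A''(\theta_0)-k''(\theta_0)D(\theta_0)\big)$, the $A''(\theta_0)$ contributions cancel and one arrives at the clean form
\begin{equation*}
T_{1,1}=k''(\theta_0)\sum_{i=1}^n\big(T(X_i)-\E[T(X_i)]\big).
\end{equation*}
Feeding $\big(\E[T_{1,1}^4]\big)^{1/4}=|k''(\theta_0)|\big(\E[(\sum_{i=1}^n\{T(X_i)-\E[T(X_i)]\})^4]\big)^{1/4}$ together with $|\tilde{V}_{1,1}|=[i(\theta_0)]^{-1/2}$ into $K_{2,2}(\theta_0)$ and $K_{3,2}(\theta_0)$ then produces the second and third terms of (\ref{noncanexponential2}), completing the proof.
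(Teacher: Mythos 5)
Your proposal is correct and follows essentially the same route as the paper: part \textbf{(1)} is obtained by substituting $\E\big[|\frac{\mathrm{d}}{\mathrm{d}\theta}\log f(X_1|\theta_0)|^3\big]=|k'(\theta_0)|^3\E[|T(X_1)-D(\theta_0)|^3]$ and $\Var\big(\frac{\mathrm{d}^2}{\mathrm{d}\theta^2}\log f(X_1|\theta_0)\big)=[k''(\theta_0)]^2\Var(T(X_1))$ into Theorem \ref{Theoremnoncan}, and part \textbf{(2)} by specialising (\ref{final_bound_regression0z}) with $d=1$, which is exactly the paper's argument, including the key identification $\ell''(\theta_0;\boldsymbol{X})+n\,i(\theta_0)=k''(\theta_0)\sum_{i=1}^n(T(X_i)-\E[T(X_i)])$. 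Your write-up is if anything slightly more explicit than the paper's (deriving $\E[T(X_1)]=D(\theta_0)$ from (R.C.2) and cancelling the $A''(\theta_0)$ terms), but there is no substantive difference.
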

\begin{proof} 
{\textbf{(1)}}: We have that
\begin{align*}
{\EE}\bigg[\left|\frac{\mathrm{d}}{\mathrm{d}\theta}\log f(X_1|\theta_0)\right|^3\bigg]  = {\EE}\left[\left|k'(\theta_0)T(X_1)-A'(\theta_0)\right|^3\right]  = |k'(\theta_0)|^3{\EE}\left[|T(X_1)-D(\theta_0)|^3\right]
\end{align*}
and
\begin{equation}
\nonumber {\rm Var}\left(\frac{\mathrm{d}^2}{\mathrm{d}\theta^2} \log f(X_1|\theta_0)\right) = {\rm Var}\left(k''(\theta_0)T(X_1)-A''(\theta_{0})\right) = \left[k''(\theta_0)\right]^2{\rm Var}\left(T(X_1)\right),
\end{equation}
and applying these formulas to the bound \eqref{boundTHEOREM} yields the bound (\ref{noncanexponential}).
\vspace{0.05in}
\\
{\textbf{(2)}}: Using the general result in \eqref{final_bound_regression0z} and the expression of $K_1(\theta_0)$ as in \eqref{notation_Ki}, we have in this specific case for $d=1$ that
\begin{equation}
\label{K1W2}
K_1(\theta_0) = \frac{14}{i(\theta_0)}\sqrt{\E\left[\left(\frac{\mathrm{d}}{\mathrm{d}\theta}\log f(X_1|\theta_0)\right)^4\right]} = \frac{14[k'(\theta_0)]^2}{i(\theta_0)}\sqrt{\E\big[\left(T(X_1) - D(\theta_0)\right)^4\big]}.
\end{equation}
With respect to $K_{2,2}(\theta_0)$ as in \eqref{notationWp}, we have that
\begin{align}
\label{K2W2}
\nonumber K_{2,2}(\theta_0) &= \frac{1}{\sqrt{i(\theta_0)}}\Big(\E\big[(\hat{\theta}_n(\boldsymbol{X}) - \theta_0)^{4}\big]\Big)^{1/4}\Big(\E\big[(\ell''(\theta_0;\boldsymbol{X}) + n\,i(\theta_0))^{4}\big]\Big)^{1/4}\\
& = \frac{|k''(\theta_0)|}{\sqrt{i(\theta_0)}}\Big(\E\big[(\hat{\theta}_n(\boldsymbol{X}) - \theta_0)^{4}\big]\Big)^{1/4}\bigg(\E\bigg[\bigg(\sum_{i=1}^{n}\lbrace T(X_i) - \E[T(X_i)]\rbrace\bigg)^{4}\bigg]\bigg)^{1/4}.
\end{align}
Combining \eqref{K1W2} and \eqref{K2W2} with the general result of \eqref{final_bound_regression0z} leads to the upper bound in \eqref{noncanexponential2}.
\end{proof}
}
\begin{remark}
\label{REMARK_CANONICAL_R2}
In the canonical case, $k''(\theta_0) \equiv 0$ and the second term of the bound\gr{s in \eqref{noncanexponential} and \eqref{noncanexponential2}} vanishes. Also\gr{, in this specific case,} $\frac{{\mathrm{d}}^2}{{\mathrm{d}}\theta^2}\log f(x|\theta) = -A''(\theta)$ and $i(\theta_0) = A''(\theta_0)$. In addition, $\frac{{\mathrm{d}}^3}{{\mathrm{d}}\theta^3}\log f(x|\theta) = -A^{(3)}(\theta)$ is independent of the random variables. This will make it easier to find a monotonic function $M(\theta)$ as in (R.C.4'') \gr{and (R.C.4''(2))}, which will be a bound for $n|A^{(3)}(\theta)|$.
\end{remark}

We give two examples using the exponential distribution, firstly, in its canonical form, and then, in Appendix \ref{expapp} under a change of parametrisation.  The example given in the appendix is given for purely illustrative purposes, as an improved bound can be obtained directly by Stein's method.

%Here, we consider two examples using the exponential distribution, firstly, in its canonical form, and then under a change of parametrisation.

In the case of $X_1, X_2, \ldots, X_n$ exponentially distributed $\mathrm{Exp}(\theta)$, i.i.d$.$ random variables where $\theta>0$, the probability density function is
\begin{equation*}
f(x|\theta) = \theta{\rm exp}\{-\theta x\}\mathbf{1}_{\{x >0\}} = {\rm exp}\{\log {\theta} - \theta x\}\mathbf{1}_{\{x>0\}} = {\rm exp}\left\lbrace k(\theta)T(x) - A(\theta) + S(x)\right\rbrace\mathbf{1}_{\{x \in B\}},
\end{equation*}
where $B = (0,\infty)$, $\theta \in \Theta = (0,\infty)$, $T(x) = -x$, $k(\theta)=\theta$, $A(\theta) = -\log{\theta}$ and $S(x)=0$. Hence $\mathrm{Exp}(\theta)$ is a single-parameter canonical exponential family distribution.  The MLE is unique and given by $\hat{\theta}_n(\boldsymbol{X}) = \frac{1}{\bar{X}}$.
\gr{\begin{corollary}
\label{corollaryexponential}
Let $X_1, X_2, \ldots, X_n$ be i.i.d$.$ random variables that follow the $\mathrm{Exp}(\theta_0)$ distribution.  Let $W=\sqrt{n\:i(\theta_0)}(\hat{\theta}_n(\boldsymbol{x})- \theta_0)$ and $Z\sim\mathrm{N}(0,1)$. Then,
\begin{itemize}
\item[\textbf{(1)}] For $n>2$,
\begin{equation}
\label{boundexponential3}
d_{\mathrm{W}}(W,Z) < \frac{5.41456}{\sqrt{n}} + \frac{\sqrt{n}(n+2)}{(n-1)(n-2)}+\frac{2}{n^{3/2}}.
\end{equation}
\item[\textbf{(2)}]For $n>4$,
\begin{equation}
\label{boundexponential4}
d_{\mathrm{W}_2}(W,Z) \leq \frac{42}{\sqrt{n}} + \frac{1}{2\sqrt{n}}\left[\frac{1144n^4 + 2028n^3 + 1576n^2 + 480n}{(n-1)(n-2)(n-3)(n-4)}\right]^{1/2}.
\end{equation}
\end{itemize} 
\end{corollary}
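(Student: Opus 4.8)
The plan is to derive both inequalities by specialising the general exponential-family bounds of Corollary \ref{Theoremnoncanexp} to the canonical representation of $\mathrm{Exp}(\theta_0)$, in which $T(x)=-x$, $k(\theta)=\theta$, $A(\theta)=-\log\theta$ and $S(x)=0$. First I would record the elementary ingredients. Since $k'(\theta)=1$ and $k''(\theta)\equiv0$, the parametrisation is canonical, so by Remark \ref{REMARK_CANONICAL_R2} the middle (the $k''$) term of each bound vanishes; moreover $D(\theta_0)=A'(\theta_0)/k'(\theta_0)=-1/\theta_0$ and $i(\theta_0)=[k'(\theta_0)]^2\mathrm{Var}(T(X_1))=1/\theta_0^2$. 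I would also fix the monotonic dominating function demanded by (R.C.4''): because $\frac{\partial^3}{\partial\theta^3}\ell(\theta;\boldsymbol{x})=2n/\theta^3$ is free of the data, the choice $M(\theta)=2n/\theta^3$ is both exact and manifestly monotonic, and serves for parts (1) and (2) simultaneously.

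Next I would evaluate the two genuinely distributional pieces. The data moments succumb to the standardisation $\theta_0 X_1\sim\mathrm{Exp}(1)$: this reduces $\mathbb{E}[|T(X_1)-D(\theta_0)|^3]/[\mathrm{Var}(T(X_1))]^{3/2}$ to the third absolute central moment $12\mathrm{e}^{-1}-2$ of $\mathrm{Exp}(1)$, and $\mathbb{E}[(T(X_1)-D(\theta_0))^4]$ to its fourth central moment $9/\theta_0^4$, which feeds $K_1(\theta_0)=42$. The MLE moments are where the substance lies: writing $G=\sum_{i=1}^n X_i\sim\mathrm{Gamma}(n,\theta_0)$ and $\hat{\theta}_n=n/G$, every expectation appearing in $K_2,K_3$ and in $K_{2,2},K_{3,2}$ becomes an average of powers of $G$, which I would evaluate from the closed forms $\mathbb{E}[G^{-k}]=\theta_0^k/\prod_{i=1}^k(n-i)$ (finite precisely when $n>k$) and $\mathbb{E}[G^{k}]=\theta_0^{-k}\prod_{i=0}^{k-1}(n+i)$. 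For part (1) this yields $\mathbb{E}[(\hat{\theta}_n-\theta_0)^2]=\theta_0^2(n+2)/[(n-1)(n-2)]$ (hence the restriction $n>2$) and $\mathbb{E}[(\hat{\theta}_n-\theta_0)^2/\hat{\theta}_n^{3}]=(n+2)/(n^2\theta_0)$; substituting into (\ref{noncanexponential}) produces the two explicit $n$-dependent summands, while the constant terms assemble to $2+(12\mathrm{e}^{-1}-2)+1=12\mathrm{e}^{-1}+1$, which I would bound above by $5.41456$ to reach (\ref{boundexponential3}). Throughout, a convenient sanity check is that $\theta_0$ must cancel, reflecting the scale invariance of $W$.

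The main obstacle is part (2), where the threshold $n>4$ appears. Here $K_{3,2}(\theta_0)$ requires $\mathbb{E}[(\hat{\theta}_n-\theta_0)^4(M(\theta_0))^2]=\tfrac{4n^2}{\theta_0^6}\mathbb{E}[(\hat{\theta}_n-\theta_0)^4]$ and $\mathbb{E}[(\hat{\theta}_n-\theta_0)^4(M(\hat{\theta}_n))^2]$; the former pins down the denominator $(n-1)(n-2)(n-3)(n-4)$ through the fourth inverse moment of $G$ (whence $n>4$), whereas the latter, after the substitution $1/\hat{\theta}_n=G/n$, is the purely \emph{positive}-moment expectation $\tfrac{4}{n^4}\mathbb{E}[G^2(n-\theta_0 G)^4]$ that is finite for every $n$. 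The delicate step is that these two contributions carry genuinely different denominators, so to collapse $K_{3,2}$ into the single rational expression $\tfrac12\sqrt{P(n)/Q(n)}$ of (\ref{boundexponential4}) one must over-estimate the second expectation by a quantity with the common denominator $Q(n)=(n-1)(n-2)(n-3)(n-4)$; it is precisely this crude but fully explicit majorisation that inflates the numerator (explaining why the leading coefficient $1144$ is much larger than the sharp constant one would obtain from the exact value). I would carry out the Gamma-moment expansions, confirm the cancellation of $\theta_0$, perform this final common-denominator estimate, and then reduce the result by routine polynomial algebra to the stated form.
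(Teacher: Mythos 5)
Your proposal is correct and follows essentially the same route as the paper's proof: specialise Corollary \ref{Theoremnoncanexp} to the canonical parametrisation with the data-free dominating function $M(\theta;\boldsymbol{x})=2n/\theta^3$, evaluate every expectation through gamma moments of the MLE (the paper works with $\bar{X}\sim\mathrm{G}(n,n\theta_0)$ and $\hat{\theta}_n=1/\bar{X}$, you with $G=n\bar{X}\sim\mathrm{G}(n,\theta_0)$ and $\hat{\theta}_n=n/G$, which is the same thing), and for part (2) pass to a common denominator via $n^{-4}\leq[(n-1)(n-2)(n-3)(n-4)]^{-1}$.

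One correction to your parenthetical diagnosis of the constant $1144$: the common-denominator majorisation is asymptotically tight, so it is not what inflates the numerator. Carrying out your exact computation gives
\begin{equation*}
\frac{4}{n^4}\mathbb{E}\big[G^2(n-\theta_0G)^4\big]=\frac{4(3n^4+61n^3+178n^2+120n)}{n^4\theta_0^2},
\end{equation*}
which is substantially smaller than the value $4(283n^4+461n^3+370n^2+120n)/(n^4\theta_0^2)$ recorded as an equality in the paper's proof (a check at $n=1$ gives $1448/\theta_0^2$ for the expectation, not $4936/\theta_0^2$). Consequently, after the majorisation your numerator is $24n^4+428n^3+808n^2+480n$ rather than the stated $1144n^4+2028n^3+1576n^2+480n$; since every coefficient is smaller, your argument still implies (\ref{boundexponential4}) --- indeed it proves a sharper bound --- but the final algebra will not literally ``reduce to the stated form'' as you anticipate.
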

}
\begin{remark}\label{remcanex1} The rate of convergence of the bound\gr{s \eqref{boundexponential3} and \eqref{boundexponential4} is $n^{-1/2}$} and the bound\gr{s do} not depend on the value of $\theta_0$. A bound with such properties was also obtained by \cite{ar15} in the bounded Wasserstein metric. \gr{Despite working in a stronger metric, in the case of the $1$-Wasserstein metric result of \eqref{boundexponential3}, we are able to give smaller numerical constants than \cite{ar15}.}

It should be noted that the exact \gr{values for $d_{\mathrm{W}}(W,Z)$ and $d_{\mathrm{W}_2}(W,Z)$ do not depend on $\theta_0$.}  This is because a simple scaling argument using the fact that $i(\theta_0)=\frac{1}{\theta_0^2}$ shows that the distribution of $W=\sqrt{n\:i(\theta_0)}(\hat{\theta}_n(\boldsymbol{x})- \theta_0)$ does not involve $\theta_0$.  Hence, it is a desirable feature of our bound\gr{s that they do} not depend on $\theta_0$.
\end{remark}
\begin{proof}
Straightforward steps can be followed in order to prove that the assumptions (R1)--(R3), (R.C.4''), \gr{and (R.C.4''(2))} hold for this example. We will not show that here. The log-likelihood function is
\begin{align}
\nonumber  \ell(\theta_0;\boldsymbol{x}) = -nA(\theta_0) + k(\theta_0)\sum_{i=1}^{n}T(x_i) = n(\log\theta_0 - \theta_0\bar{x}),
\end{align}
and its third derivative is given by  $\ell^{(3)}(\theta_0;\boldsymbol{x}) = -nA^{(3)}(\theta_0) = \frac{2n}{\theta_0^3}$.
We see that $|\ell^{(3)}(\theta;\boldsymbol{x})| = \frac{2n}{\theta^3}$, which is a decreasing function with respect to $\theta$, and therefore conditions (R.C.4'') \gr{and (R.C.4''(2)) that are necessary for the results in \eqref{boundexponential3} and \eqref{boundexponential4}, respectively,} are satisfied with $M(\theta,\boldsymbol{x}) = \frac{2n}{\theta^3}$. \gr{We now proceed to separately prove results {\textbf{(1)}} and {\textbf{(2)}} of Corollary \ref{corollaryexponential}.
\vspace{3mm}
\\
{\textbf{For (1):}}} Basic calculations of integrals show that $\EE[|T(X_1)-D(\theta_0)|^3] = \EE\big[\big|\frac{1}{\theta_0} - X_1\big|^3\big] < \frac{2.41456}{\theta_0^3}$.  In addition, since $T(x)=x$, we have that ${\rm Var}(T(X_1)) = {\rm Var}(X_1) = \frac{1}{\theta_0^2}$ and therefore for the first term of the upper bound in \eqref{noncanexponential}, we have that
\begin{align}
\label{upperbound_first_term_can_exponential}
& \frac{1}{\sqrt{n}}\bigg(2 + \frac{{\EE}[|T(X_1)-D(\theta_0)|^3]}{\left[{\rm Var}(T(X_1))\right]^{3/2}}\bigg) < \frac{4.41456}{\sqrt{n}}.
\end{align}
According to Remark \ref{REMARK_CANONICAL_R2}, the second term of the bound in \eqref{noncanexponential} vanishes. Finally, we consider the third term. Recall that we can take $M(\theta,\boldsymbol{x}) = \frac{2n}{\theta^3}$. We know that since $X_i \sim {\rm Exp}(\theta_0)$, $i=1,2,\ldots,n$, we have that $\bar{X} \sim {\rm G}(n,n\theta_0)$, with ${\rm G}(\alpha, \beta)$ being the gamma distribution with shape parameter $\alpha$ and rate parameter $\beta$. Using now the fact that $\hat{\theta}_n(\boldsymbol{x}) = \frac{1}{\bar{x}}$, the results in pp$.$ 70--73 of \cite{DistributionTheory} give that, for $n>2$,
\begin{align}
\label{M_for_theta0}
& \EE\big|(\hat{\theta}_n(\boldsymbol{X}) - \theta_0)^2M(\theta_0;\boldsymbol{X})\big| = \frac{2n}{\theta_0^3}\EE\bigg[\left(\frac{1}{\bar{X}} - \theta_0\right)^2 \bigg] = \frac{2n(n+2)}{\theta_0(n-1)(n-2)}
\end{align}
and
\begin{align}
\label{M_for_thetahat}
\nonumber  \EE\big|(\hat{\theta}_n(\boldsymbol{X}) - \theta_0)^2M(\hat{\theta}_n(\boldsymbol{X});\boldsymbol{X})\big| &= 2n\EE\bigg[\bar{X}^3\left(\frac{1}{\bar{X}} - \theta_0\right)^2\bigg] = 2n\EE\big[\bar{X} + \theta_0^2\bar{X}^3 - 2\theta_0\bar{X}^2\big]\\
& = \frac{2n}{\theta_0}\left(1 + \frac{(n+1)(n+2)}{n^2} - 2\frac{n+1}{n}\right) = \frac{2(n+2)}{n\theta_0}.
\end{align}
Applying the results of \eqref{upperbound_first_term_can_exponential} , \eqref{M_for_theta0} and \eqref{M_for_thetahat} to \eqref{noncanexponential} and using that $i(\theta_0) = \frac{1}{\theta_0^2}$, yields \gr{result {\textbf{(1)}}} of the corollary.
\vspace{3mm}
\\
\gr{{\textbf{For (2):}} For the first term of the upper bound in \eqref{noncanexponential2} we have, since $i(\theta_0) = \frac{1}{\theta_0^2}$ and $k(\theta_0) = \theta_0$, that
\begin{equation}
\label{first_term_exp_W2}
\frac{14}{\sqrt{n}\,i(\theta_0)}[k'(\theta_0)]^2\sqrt{\E\big[\left(T(X_1) - D(\theta_0)\right)^4\big]} = \frac{14\theta_0^2}{\sqrt{n}}\sqrt{\E\bigg[\bigg(X_1 - \frac{1}{\theta_0}\bigg)^4\bigg]} = \frac{42}{\sqrt{n}},
\end{equation}
where we used that the fourth central moment of  $X \sim {\rm Exp}(\theta_0)$ is given by $\mathbb{E}[(X-\frac{1}{\theta_0})^4]=\frac{9}{\theta_0^4}$ .
%for any $n\in \mathbb {N}$, the $n$-th central moment of $X \sim {\rm Exp}(\theta_0)$ is given by
%\begin{equation}
%\nonumber \E\left[\left(X - \frac{1}{\theta_0}\right)^n\right]=\frac{n!}{\theta_0^{n}}\sum_{k=0}^{n}\frac{(-1)^{k}}{k!}.
%\end{equation}
The second term in \eqref{noncanexponential2} vanishes due to $k''(\theta_0) = 0$. With respect to the third term, since $\ell^{(3)}(\theta_0;\boldsymbol{x}) = \frac{2n}{\theta_0^3}$, we take $M(\theta_0;\boldsymbol{x}) = \frac{2n}{\theta_0^3}$. We have already mentioned that $\hat{\theta}_n(\boldsymbol{X}) = \frac{1}{\bar{X}}$ and $\bar{X} \sim G(n,n\theta_0)$. Therefore, simple calculations yield
\begin{equation}
\label{third_term_exp_W2_1}
\E\big[(\hat{\theta}_n(\boldsymbol{X}) - \theta_0)^4(M(\theta_0;\boldsymbol{X}))^2\big] = \frac{4n^2}{\theta_0^6}\E\bigg[\bigg(\frac{1}{\bar{X}} - \theta_0\bigg)^4\bigg] = \frac{4n^2(3n^2 + 46n + 24)}{\theta_0^2(n-1)(n-2)(n-3)(n-4)}
\end{equation}
and
\begin{align}
\label{third_term_exp_W2_2}
\nonumber \E\big[(\hat{\theta}_n(\boldsymbol{X}) - \theta_0)^4(M(\hat{\theta}_n(\boldsymbol{X});\boldsymbol{X}))^2\big] & = 4n^2\E\bigg[\bigg(\frac{1}{\bar{X}} - \theta_0\bigg)^4\left(\bar{X}\right)^6\bigg]
% = 4n^2\E\big[(\theta_0\bar{X} - 1)^4(\bar{X})^2\big]
 \\
\nonumber & = 4n^2\E\left[\theta_0^4(\bar{X})^6 - 4\theta_0^3(\bar{X})^5 + 6\theta_0^2(\bar{X})^4 - 4\theta_0(\bar{X})^3 + (\bar{X})^2\right]\\
& = \frac{4(283n^4 + 461n^3 + 370n^2 + 120n)}{n^4\theta_0^2}.
\end{align}
Applying now the results of \eqref{first_term_exp_W2}, \eqref{third_term_exp_W2_1} and \eqref{third_term_exp_W2_2} to \eqref{noncanexponential2} and using that the second term of the bound in \eqref{noncanexponential2} vanishes, yields result {\textbf{(2)}} of the corollary. Note that the inequality $n^{-4} \leq [(n-1)(n-2)(n-3)(n-4)]^{-1}$, for any $n>4$, has also been used.}
\end{proof}

\subsection{The normal distribution under canonical parametrisation}
\label{subsec::can_normal}
The distribution of a random variable $X$ is said to be a canonical \emph{multi-parameter exponential family distribution} if, for $\boldsymbol{\eta} \in \mathbb{R}^d$, the probability density (or mass) function takes the form
\begin{equation}
\nonumber f(x|\boldsymbol{\eta}) = \mathrm{exp}\bigg\{ \sum_{j=1}^{d}\eta_jT_j(x) - A(\boldsymbol{\eta}) + S(x)\bigg\}\mathbf{1}_{\{x \in B\}},
\end{equation}
where $B = \left\lbrace x:f(x|\boldsymbol{\eta})>0 \right\rbrace$, the support of $X$, does not depend on $\boldsymbol{\eta}$; $A(\boldsymbol{\eta})$ is a function of the parameter $\boldsymbol{\eta}$; and $T_j(x)$ and $S(x)$ are functions of only the data. 

%The vectors $\boldsymbol{\eta} = (\eta_1,\eta_2,\ldots,\eta_d)^\intercal$ and $\boldsymbol{T}(x) = \left(T_{1}(x), T_{2}(x), \ldots, T_{d}(x)\right)^\intercal$ are called the \textit{natural parameter vector} and \textit{natural sufficient statistic}, respectively. 

Here, we apply Theorem \ref{Theorem_multi} in the case that $X_1, X_2, \ldots, X_n$ are i.i.d$.$ random variables following the $\mathrm{N}(\mu,\sigma^2)$ distribution, an exponential family distribution. Let
\begin{equation}
\label{natural_parameter}
\boldsymbol{\eta}_0 = \left(\eta_1,\eta_2\right)^\intercal= \left(\frac{1}{2\sigma^2},\frac{\mu}{\sigma^2}\right)^\intercal,
\end{equation}
be the natural parameter vector. The MLE for $\boldsymbol{\eta}_0$ exists, it is unique and equal to 
\[\hat{\boldsymbol{\eta}}(\boldsymbol{X}) = \left(\hat{\eta}_1, \hat{\eta}_2\right)^{\intercal} = \frac{n}{\sum_{i=1}^{n}\left(X_i - \bar{X}\right)^2}\left(\frac{1}{2}, \bar{X}\right)^{\intercal}.\]
This can be seen from the invariance property of the MLE and the result of \cite[p$.$ 116]{Davison} in which the MLEs for $\mu$ and $\sigma^2$ are given.  In Corollary \ref{thmnorcan}, we give an explicit bound on \gr{the} \gr{$1$-}Wasserstein distance between the distribution of $\hat{\boldsymbol{\eta}}(\boldsymbol{X})$ and its limiting multivariate normal distribution.  As $\hat{\boldsymbol{\eta}}(\boldsymbol{X})$ is a non-linear statistic, this result demonstrates the power of our general theorems of Section \ref{sec3}; to the best of our knowledge no other such optimal order bounds have been given for multivariate normal approximation of non-linear statistics in the \gr{1-}Wasserstein metric.

% In contrast to Corollary \ref{corollary_linear_simple}, the MLE in the current example of the normal distribution under canonical parametrisation is not a sum of random variables; therefore, classical Stein's method approaches, which require that the quantity of interest is a sum, cannot be employed. It appears that our results are the first that can be applied for such cases where the vector MLE has a general form in order to get upper bounds on the absolute value of the difference of expectations on the class of functions $H$ in \eqref{class_multi}. The results of \cite{Pinelis_Molzon} and \cite{Pinelis} can also be applied through the Delta method to give upper bounds only on the Kolmogorov distance though. The conditions (R.C.1)-(R.C.4) are satisfied. 

\begin{corollary}\label{thmnorcan}
Let $X_1, X_2, \ldots, X_n$ be i.i.d$.$ $\mathrm{N}(\mu, \sigma^2)$ random variables. Let $\boldsymbol{\eta}_0$ be as in \eqref{natural_parameter}, and for ease of presentation we denote $\alpha := \alpha(\eta_1,\eta_2) = \eta_1(1+\sqrt{\eta_1})^2 + \eta_2^2$. Let $\boldsymbol{W}=\sqrt{n}[I(\boldsymbol{\eta}_0)]^{1/2}(\hat{\boldsymbol{\eta}}(\boldsymbol{X}) - \boldsymbol{\eta}_0)$ and $\boldsymbol{Z} \sim {\rm MVN}(\boldsymbol{0},I_{2})$.  Then, for $n>9$,
\begin{align}
\label{cannorfinalbd}d_{\mathrm{W}}(\boldsymbol{W},\boldsymbol{Z})&<\frac{189}{\alpha\sqrt{n}}\bigg(15(1+\sqrt{\eta_1})^4(\eta_1+\eta_2^2)^2+\frac{3\eta_2^6}{\eta_1}\bigg(10+\frac{3\eta_2^2}{\eta_1}\bigg)\bigg)^{1/2}\nonumber\\
&\quad+ \frac{1}{\sqrt{2\alpha n}}(3\eta_1+4\eta_1^2+3\eta_2^2)\bigg[\frac{206}{\sqrt{\eta_1}}+\frac{1286}{\eta_1}+\frac{393|\eta_2|}{\eta_1}+\frac{1792\eta_2^2}{\eta_1^2}\bigg].
\end{align}
\end{corollary}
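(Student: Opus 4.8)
The strategy is to apply Theorem~\ref{Theorem_multi} directly with $d=2$, $\boldsymbol{\theta}=\boldsymbol{\eta}$, and to evaluate the three constants $K_1(\boldsymbol{\eta}_0)$, $K_2(\boldsymbol{\eta}_0)$, $K_3(\boldsymbol{\eta}_0)$ of \eqref{notation_Ki} explicitly for the canonical normal family. First I would write the $\mathrm{N}(\mu,\sigma^2)$ density in canonical form with $T_1(x)=-x^2$, $T_2(x)=x$ and cumulant function $A(\boldsymbol{\eta})=\frac{\eta_2^2}{4\eta_1}-\frac12\log(2\eta_1)+\frac12\log(2\pi)$, so that every derivative of $\ell(\boldsymbol{\eta};\boldsymbol{x})=\sum_i\log f(x_i|\boldsymbol{\eta})$ of order at least two reduces to $-n$ times the corresponding derivative of $A$ and is therefore \emph{deterministic}. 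The key simplifying observation is that this forces $T_{lj}=\frac{\partial^2}{\partial\eta_l\partial\eta_j}\ell(\boldsymbol{\eta}_0;\boldsymbol{X})+n[I(\boldsymbol{\eta}_0)]_{lj}=0$ identically (a general feature of canonical parametrisations, since $\frac{\partial^2}{\partial\eta_l\partial\eta_j}\ell(\boldsymbol{\eta}_0;\boldsymbol{X})=-n[\nabla^2A(\boldsymbol{\eta}_0)]_{lj}=-n[I(\boldsymbol{\eta}_0)]_{lj}$), so $K_2(\boldsymbol{\eta}_0)=0$ and only $K_1$ and $K_3$ remain.

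Next I would compute $I(\boldsymbol{\eta}_0)=\nabla^2A(\boldsymbol{\eta}_0)$, observe that $\det I(\boldsymbol{\eta}_0)=\frac{1}{4\eta_1^3}$, and obtain $\tilde V=[I(\boldsymbol{\eta}_0)]^{-1/2}$ from the closed form for the square root of a $2\times2$ positive definite matrix, $B^{1/2}=(B+\sqrt{\det B}\,I_2)/\sqrt{\mathrm{tr}(B)+2\sqrt{\det B}}$, applied to $B=I(\boldsymbol{\eta}_0)^{-1}$. A short calculation shows $\mathrm{tr}(B)+2\sqrt{\det B}=2\alpha$ with $\alpha=\eta_1(1+\sqrt{\eta_1})^2+\eta_2^2$, which explains the appearance of $\alpha$ throughout \eqref{cannorfinalbd} and gives $\tilde V$ explicitly with overall factor $\sqrt{2/\alpha}$. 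For $K_1(\boldsymbol{\eta}_0)$ I would then form $\xi_{1,j}=\sum_k\tilde V_{jk}\frac{\partial}{\partial\eta_k}\log f(X_1|\boldsymbol{\eta}_0)$, whose score components are the centred variables $\mathbb{E}[X_1^2]-X_1^2$ and $X_1-\mu$, and evaluate $\mathbb{E}[\xi_{1,j}^4]$ for $j=1,2$ via Gaussian moments (up to order eight, since the first score component is quadratic in $X_1$). Multiplying $14\cdot 2^{5/4}\max_j\sqrt{\mathbb{E}[\xi_{1,j}^4]}$ produces the first term of \eqref{cannorfinalbd}, whose $1/\alpha$ dependence follows because $\tilde V\propto\alpha^{-1/2}$ gives $\sqrt{\mathbb{E}[\xi_{1,j}^4]}\propto\alpha^{-1}$.

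The main work, and the principal obstacle, is $K_3(\boldsymbol{\eta}_0)$. I would first record the third derivatives of $\ell$, which (being $-n$ times those of $A$) are the data-free quantities $\frac{\partial^3}{\partial\eta_1^3}\ell=n\big(\frac{3\eta_2^2}{2\eta_1^4}+\frac{1}{\eta_1^3}\big)$, $\frac{\partial^3}{\partial\eta_1^2\partial\eta_2}\ell=-\frac{n\eta_2}{\eta_1^3}$, $\frac{\partial^3}{\partial\eta_1\partial\eta_2^2}\ell=\frac{n}{2\eta_1^2}$ and $\frac{\partial^3}{\partial\eta_2^3}\ell=0$, and choose dominating functions $M_{qlj}(\boldsymbol{\eta})$ for their absolute values. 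The conceptual subtlety is the monotonicity requirement of (R.C.4''): the $\eta_1$-dependence is genuinely monotone (decreasing), but $\eta_2$ enters through the non-monotone quantities $\eta_2^2$ and $|\eta_2|$. These are convex in $\eta_2$, so their value at the intermediate point $\eta_{0,2}^{*}$ is still bounded by the maximum over the endpoints $\{\hat\eta_2,\eta_{0,2}\}$, which is precisely what is needed for inequality \eqref{maxbound} to go through. With such $M_{qlj}$ in hand, I would compute the four corner expectations $\mathbb{E}\big[|Q_lQ_qM_{qlj}(\tilde{\boldsymbol{\eta}};\boldsymbol{X})|\big]$ obtained by taking each $\tilde\eta_m\in\{\hat\eta_m,\eta_{0,m}\}$, using $\hat\eta_1=\frac{n}{2V}$ and $\hat\eta_2=\frac{n\bar X}{V}$ with $V=\sum_i(X_i-\bar X)^2$, together with the independence of $\bar X\sim\mathrm{N}(\mu,\sigma^2/n)$ and $V/\sigma^2\sim\chi^2_{n-1}$.

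These expectations reduce to positive and inverse moments of $V$ and moments of $\bar X$. The most demanding is the mixed corner $(\eta_{0,1},\hat\eta_2)$, where $M\propto\hat\eta_2^2\propto \bar X^2/V^2$ multiplies $Q_1^2\propto V^{-2}$, and so requires $\mathbb{E}[V^{-4}]<\infty$, i.e. $n-1>8$; this is the source of the restriction $n>9$. Summing the four corner contributions, weighting by $|\tilde V_{kj}|$ and the factor $\tfrac12$, and collecting powers of $\eta_1$ and $\eta_2$ then yields the second term of \eqref{cannorfinalbd}, while bounding the various Gamma-function ratios by explicit constants produces the stated numerical coefficients. I expect the bookkeeping of the many inverse-chi-square moments across the four corners, together with the careful choice of monotone/convex dominating functions, to be the most laborious and error-prone part of the argument.
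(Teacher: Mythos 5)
Your proposal follows essentially the same route as the paper's proof: apply Theorem \ref{Theorem_multi} with $d=2$, observe that $K_2(\boldsymbol{\eta}_0)=0$ because the canonical parametrisation makes all second derivatives of $\ell$ deterministic, compute $\tilde{V}=[I(\boldsymbol{\eta}_0)]^{-1/2}$ with the factor $\sqrt{2/\alpha}$, bound $K_1$ via Gaussian moments of the score components $\mathbb{E}[X_1^2]-X_1^2$ and $X_1-\mu$, and bound $K_3$ by evaluating the corner expectations through the representation of $(\hat{\eta}_1,\hat{\eta}_2)$ in terms of $\bar{X}$ and the independent $\chi^2_{(n-1)}$ variable, with $n>9$ arising exactly as you say from fourth moments of the inverse chi-square. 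Your remark that $|\eta_2|$ and $\eta_2^2$ are convex rather than monotone in $\eta_2$, so that inequality \eqref{maxbound} still holds at the intermediate point, is a careful refinement of a point the paper's proof passes over silently when it selects the dominating functions $M_{qlj}$.
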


\begin{remark}\label{norcanrem}A $\mathcal{O}(n^{-1/2})$ bound on the distance between $\boldsymbol{W}=\sqrt{n}[I(\boldsymbol{\eta}_0)]^{1/2}(\hat{\boldsymbol{\eta}}(\boldsymbol{X}) - \boldsymbol{\eta}_0)$ and $\boldsymbol{Z}$ in the weaker $d_{0,1,2,3}$ metric was given in \cite{a18}.  Aside from being given in a stronger metric, our bound has the advantage of taking a simpler form with a better dependence on the parameters $\eta_1$ and $\eta_2$.  The numerical constants in our bound and that of \cite{a18} are of the same magnitude.  In deriving the bound (\ref{cannorfinalbd}) we made no attempt to optimise the numerical constants and instead focused on giving a clear proof and simple final bound. 
\end{remark}

The following lemma will be used in the proof of Corollary \ref{thmnorcan}.  The proof is given in Appendix \ref{appnor4}.

\begin{lemma}\label{momentlem}Let $Q_i=\hat{\eta}_i-\eta_i$, $i=1,2$.  Then, for $n>9$,
%\begin{align*}\E[Q_1^2]&\leq\frac{10\eta_1^2}{n}, \\
%\E[Q_2^2]&<\frac{1}{n}(6\eta_1+10\eta_2^2), \\
%\E[Q_1^2Q_2^2]&<\frac{1}{n^2}(6640\eta_1^3+29220\eta_1^2\eta_2^2), \\
%\E[Q_1^4]&<\frac{29220\eta_1^4}{n^2}, \\
%\E[Q_2^4]&<\frac{1}{n^2}(4800\eta_1^2+39840\eta_1\eta_2^2+8580\eta_2^4),
%\end{align*}
\begin{align*}&\E[Q_1^2]\leq\frac{10\eta_1^2}{n}, \quad
\E[Q_2^2]<\frac{1}{n}(6\eta_1+10\eta_2^2), \quad \E[Q_1^4]<\frac{6958\eta_1^4}{n^2},\\
& \E[Q_2^4]<\frac{1}{n^2}(5886\eta_1^2+11700\eta_2^4),\quad \E[Q_1^2Q_2^2]<\frac{\eta_1^2}{n^2}(6400\eta_1+9023\eta_2^2), 
\end{align*}
and
\begin{align*}&\E[\hat{\eta}_1^{-8}]<\frac{31}{\eta_1^8},\quad
\E[\hat{\eta}_1^{-6}]<\frac{7}{\eta_1^6},\quad
\E[\hat{\eta}_1^{-4}]<\frac{2}{\eta_1^4},\\
&\E[\hat{\eta}_2^{2}]<\eta_1+3\eta_2^2, \quad
\E[\hat{\eta}_2^{4}]<69\eta_1^2+153\eta_2^4,\\
&\E\bigg[\frac{|\hat{\eta}_2|}{\hat{\eta}_1^3}\bigg]<\frac{|\eta_2|}{\eta_1^3}, \quad
\E\bigg[\frac{\hat{\eta}_2^2}{\hat{\eta}_1^6}\bigg]<\frac{1}{\eta_1^6}(\eta_1+2\eta_2^2), \quad
\E\bigg[\frac{\hat{\eta}_2^4}{\hat{\eta}_1^8}\bigg]<\frac{2}{\eta_1^8}(\eta_1^2+2\eta_2^4).
\end{align*}

%\begin{align*}\E[\hat{\eta}_1^{-8}]&<\frac{31}{\eta_1^8},\\
%\E[\hat{\eta}_1^{-6}]&<\frac{7}{\eta_1^6},\\
%\E[\hat{\eta}_1^{-4}]&<\\
%\E[\hat{\eta}_2^{2}]&<\\
%\E[\hat{\eta}_2^{4}]&<12\eta_1^2+115\eta_1\eta_2^2+96\eta_2^4,\\
%\E\bigg[\frac{\hat{\eta}_2}{\hat{\eta}_1^3}\bigg]&\leq\\
%\E\bigg[\frac{\hat{\eta}_2^2}{\hat{\eta}_1^6}\bigg]&\leq\\
%\E\bigg[\frac{\hat{\eta}_2^4}{\hat{\eta}_1^8}\bigg]&\leq
%\end{align*}
\end{lemma}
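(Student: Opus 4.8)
The plan is to reduce every one of the thirteen estimates to elementary moments of a single chi-square variable and an independent standard normal, and then to dominate the resulting rational functions of $n$ by the stated constants. First I would record the two standard distributional facts for i.i.d.\ $\mathrm{N}(\mu,\sigma^2)$ data: writing $S^2=\sum_{i=1}^n(X_i-\bar X)^2$, the variable $V:=S^2/\sigma^2=2\eta_1S^2$ follows a $\chi^2_{n-1}$ distribution, while $\bar X=\mu+Z/\sqrt{2n\eta_1}$ for a standard normal $Z$, and, crucially, $Z$ and $V$ are independent. Substituting $\sigma^2=1/(2\eta_1)$ and $\mu=\eta_2/(2\eta_1)$ into the stated MLE gives the clean representations
\[\hat\eta_1=\frac{n\eta_1}{V},\qquad \hat\eta_2=\frac{n\eta_2+\sqrt{2n\eta_1}\,Z}{V},\]
so that $Q_1=\eta_1(n/V-1)$ and $Q_2=\eta_2(n/V-1)+\sqrt{2n\eta_1}\,Z/V$. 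Every quantity in the lemma then becomes a statement about $V$ and $Z$ alone.

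Next I would assemble the two moment tables on which the whole proof rests. For $V\sim\chi^2_{n-1}$ the integer moments are $\E[V^r]=\prod_{j=0}^{r-1}(n-1+2j)$ for $r\geq1$ and $\E[V^{-r}]=1/\prod_{j=1}^{r}(n-1-2j)$ for $r\geq1$; note that $\E[V^{-4}]$ is finite exactly when $n>9$, which is precisely the hypothesis of the lemma. For $Z\sim\mathrm{N}(0,1)$ I would record $\E[Z^2]=1$, $\E[Z^4]=3$, $\E[Z^6]=15$, $\E[Z^8]=105$, with all odd moments vanishing.

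The computation then proceeds identically for each item: expand the relevant power of $Q_1$, $Q_2$, or of the ratio $\hat\eta_2^a/\hat\eta_1^b$, and take expectations. Because $Z$ and $V$ are independent and $Z$ is symmetric, every term carrying an odd power of $Z$ drops out, and each surviving term factorises as a product of one $V$-moment and one $Z$-moment from the tables; for instance $\E[Q_1^2]=\eta_1^2\big(n^2\E[V^{-2}]-2n\E[V^{-1}]+1\big)$ and $\E[Q_1^2Q_2^2]=\eta_1^2\eta_2^2\,\E[(n/V-1)^4]+2n\eta_1^3\,\E[(n/V-1)^2V^{-2}]$. The negative powers of $\hat\eta_1$ are simply $\E[V^{r}]/(n\eta_1)^{r}$, while the mixed ratios reduce, after cancelling the $V$ in the numerator of $\hat\eta_2$, to $\E\big[(n\eta_2+\sqrt{2n\eta_1}\,Z)^a\,V^{\,b-a}\big]/(n\eta_1)^b$. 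This yields an exact rational function of $n$, times a monomial in $\eta_1,\eta_2$, for every quantity in the lemma; one checks in passing that the leading $O(1/n)$ contributions cancel in the fourth-order items, leaving the expected $O(n^{-2})$ behaviour.

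The final step is to dominate each of these rational functions by the stated constant for $n>9$, e.g.\ verifying $n^2\E[V^{-2}]-2n\E[V^{-1}]+1\le 10/n$ together with the analogous fourth-order estimates that produce the constants $6958$, $5886$, $9023$, and so on; this is elementary but is where the bulk of the labour lies. I expect the main obstacle to be twofold: keeping the fourth-moment and mixed-moment expansions organised without arithmetic slips, and handling the expected absolute value that survives in $\E[|\hat\eta_2|/\hat\eta_1^3]$, where the factorisation leaves the factor $\E\big[|n\eta_2+\sqrt{2n\eta_1}\,Z|\big]$. Here I would bound this Gaussian absolute moment either via Cauchy--Schwarz through $\sqrt{\E[(\,\cdot\,)^2]}$ or via $n|\eta_2|+\sqrt{2n\eta_1}\,\E|Z|$, and then confirm that its product with $\E[V^2]/(n\eta_1)^3$ remains below $|\eta_2|/\eta_1^3$ for $n>9$.
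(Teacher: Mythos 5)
Your strategy coincides with the paper's own proof of Lemma \ref{momentlem}: both rest on the independence of $\bar{X}$ and $\sum_{i}(X_i-\bar{X})^2$ (Basu's theorem), the $\chi^2_{n-1}$ law of the scaled sample variance and the normal law of $\bar{X}$, exact moment tables, and then domination of the resulting rational functions of $n$ for $n>9$ (the paper exploits monotonicity in $n$ and evaluates at $n=10$). The paper phrases the representation through $U\sim\mathrm{N}(\mu,\sigma^2/n)$ and an inverse chi-square variable, you through a standard normal $Z$ and a chi-square $V$; this is only a reparametrisation. The one methodological difference is $\E[Q_1^2Q_2^2]$: you expand it exactly, whereas the paper avoids the mixed expansion via Cauchy--Schwarz, $\E[Q_1^2Q_2^2]\leq\sqrt{\E[Q_1^4]\E[Q_2^4]}$, which together with $\sqrt{a+b}\leq\sqrt{a}+\sqrt{b}$ is precisely where the constants $6400\approx\sqrt{6958\cdot5886}$ and $9023\approx\sqrt{6958\cdot11700}$ come from. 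Your exact route is sharper and lands below those constants, so either way is fine for twelve of the thirteen claims.

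There is, however, one step in your plan that cannot be carried out, and it is exactly the one you flagged as delicate. Your factorisation gives
\[\E\bigg[\frac{|\hat{\eta}_2|}{\hat{\eta}_1^3}\bigg]=\frac{(n-1)(n+1)}{n^3\eta_1^3}\,\E\big|n\eta_2+\sqrt{2n\eta_1}\,Z\big|,\]
and no estimate of the Gaussian absolute moment will push this below $|\eta_2|/\eta_1^3$: at $\eta_2=0$ the left-hand side equals $\frac{2(n^2-1)}{n^{5/2}\sqrt{\pi}}\,\eta_1^{-5/2}>0$ while the claimed bound is $0$, so the inequality in the lemma is false for $\eta_2=0$ (and, by continuity, for small $|\eta_2|$). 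Your method is not at fault here; it exposes an error in the statement. The paper's own appendix treats this item by computing the \emph{signed} expectation, $\E[\hat{\eta}_2/\hat{\eta}_1^3]=\frac{(n-1)(n+1)\eta_2}{n^2\eta_1^3}$, i.e.\ silently dropping the absolute value; since $\E|Y|\geq|\E Y|$, that computation does not imply the stated claim. If you want a provable statement along your lines, either prove the signed version $|\E[\hat{\eta}_2/\hat{\eta}_1^3]|<|\eta_2|/\eta_1^3$, or keep the absolute value and accept an additive correction, e.g.\ your triangle-inequality bound yields $\E[|\hat{\eta}_2|/\hat{\eta}_1^3]<\frac{|\eta_2|}{\eta_1^3}+\frac{2}{\sqrt{\pi n}}\,\frac{1}{\eta_1^{5/2}}$. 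Note that this particular item does not appear to be invoked in the proof of Corollary \ref{thmnorcan}, where the terms involving $|\hat{\eta}_2|$ are handled by Cauchy--Schwarz through $\E[\hat{\eta}_2^2]$ and $\E[\hat{\eta}_2^2/\hat{\eta}_1^6]$, so the downstream results are unaffected; everything else in your proposal matches the paper and goes through.
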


\noindent{\emph{Proof of Corollary \ref{thmnorcan}.}} The first and second-order partial derivatives of the logarithm of the normal density function are given by
\begin{align}
\label{asdf}
\nonumber & \frac{\partial}{\partial \eta_1}\log f(x_1|\boldsymbol{\eta}_0) = -x_1^2 + \frac{1}{2\eta_1} + \frac{\eta_2^2}{4\eta_1^2},\qquad \frac{\partial}{\partial \eta_2}\log f(x_1|\boldsymbol{\eta}_0) = x_1 - \frac{\eta_2}{2\eta_1},\\
\nonumber &\frac{\partial^2}{\partial\eta_1^2}\log f(x_1|\boldsymbol{\eta}_0) = -\left(\frac{1}{2\eta_1^2} + \frac{\eta_2^2}{2\eta_1^3}\right), \qquad\; \frac{\partial^2}{\partial\eta_2^2} \log f(x_1|\boldsymbol{\eta}_0) = -\frac{1}{2\eta_1},\\
&\frac{\partial^2}{\partial\eta_1\partial\eta_2} \log f(x_1|\boldsymbol{\eta}_0) = \frac{\partial^2}{\partial\eta_2\partial\eta_1} \log f(x_1|\boldsymbol{\eta}_0) = \frac{\eta_2}{2\eta_1^2}.
\end{align}
Therefore, the expected Fisher information matrix for one random variable is
\begin{equation}
\label{multi_normal_FISHER1}
I(\boldsymbol{\eta}_0) = \frac{1}{2\eta_1}\begin{pmatrix}
\frac{1}{\eta_1} + \frac{\eta_2^2}{\eta_1^2} & -\frac{\eta_2}{\eta_1}\\
-\frac{\eta_2}{\eta_1} & 1
\end{pmatrix},
\end{equation}
and simple calculations give that
\begin{equation}
\label{multi_normal_FISHER}
\nonumber \left[I(\boldsymbol{\eta}_0)\right]^{-1/2} = \tilde{V} = \sqrt{\frac{2}{\alpha}}\begin{pmatrix}
\eta_1^{3/2}\left(1+\sqrt{\eta_1}\right) & \eta_1\eta_2\\
\eta_1\eta_2 & \eta_1\left(1+\sqrt{\eta_1}\right)+\eta_2^2
\end{pmatrix},
\end{equation}
where $\alpha = \eta_1\left(1+\sqrt{\eta_1}\right)^2 + \eta_2^2$ is defined as in the statement of the corollary.  
We now set about bounding $d_{\mathrm{W}}(\boldsymbol{W},\boldsymbol{Z})$ by applying the general bound (\ref{final_bound_regression}).  To this end, we first note that $K_2(\boldsymbol{\eta}_0)=0$ due to the fact that $\EE[T_{lj}^2] = 0$, for all $l,j \in \left\lbrace 1,2\right\rbrace$. This follows from the definition of $T_{kj}$ in \eqref{cm} and the results of \eqref{asdf} and \eqref{multi_normal_FISHER1}.

We now focus on bounding $K_1(\boldsymbol{\eta}_0)$.  Let
\[R_{1,j}=\E\bigg[\bigg(\sum_{k=1}^{d}\tilde{V}_{j,k}\frac{\partial}{\partial\theta_k}\log\left(f(\boldsymbol{X}_1|\boldsymbol{\eta}_0)\right)\bigg)^4\bigg], \quad j=1,2.\]
Then
\begin{align}R_{1,1}&=\E\bigg[\bigg(\sqrt{\frac{2}{\alpha}}\eta_1^{3/2}(1+\sqrt{\eta_1})\bigg(\frac{1}{2\eta_1}+\frac{\eta_2^2}{4\eta_1^2}-X_1^2\bigg)+\sqrt{\frac{2}{\alpha}}\eta_1\eta_2\bigg(X_1-\frac{\eta_2}{2\eta_1}\bigg)\bigg)^4\bigg]\nonumber \\
\label{r1aa}&\leq \frac{32}{\alpha^2}\bigg\{\eta_1^6(1+\sqrt{\eta_1})^4\E\bigg[\bigg(X_1^2-\frac{1}{2\eta_1}-\frac{\eta_2^2}{4\eta_1^2}\bigg)^4\bigg]+\eta_1^4\eta_2^4\E\bigg[\bigg(X_1-\frac{\eta_2}{2\eta_1}\bigg)^4\bigg]\bigg\},
\end{align}
where we used the inequality $(a+b)^4\leq 8(a^4+b^4)$. In terms of the parameters $\eta_1$ and $\eta_2$, we have that $\mu=\frac{\eta_2}{2\eta_1}$ and $\sigma^2=\frac{1}{2\eta_1}$ , so that $X_1\sim \mathrm{N}(\frac{\eta_2}{2\eta_1},\frac{1}{2\eta_1})$.  Therefore
\[\E\bigg[\bigg(X_1-\frac{\eta_2}{2\eta_1}\bigg)^4\bigg]=\frac{3}{4\eta_1^2},\]
and a longer calculation using standard formulas for the lower order moments of the normal distribution gives that
\begin{align*}\E\bigg[\bigg(X_1^2-\frac{1}{2\eta_1}-\frac{\eta_2^2}{4\eta_1^2}\bigg)^4\bigg]&=\E[(X_1^2-(\sigma^2+\mu^2))^4]\\
&=60\sigma^8+240\sigma^6\mu^2+48\sigma^4\mu^4=\frac{15}{4\eta_1^4}+\frac{30\eta_2^2}{\eta_1^5}+\frac{3\eta_2^4}{4\eta_1^6}.
\end{align*}
Substituting these formulas into (\ref{r1aa}) gives that
\begin{align*}R_{1,1}&\leq\frac{32}{\alpha^2}\bigg\{\eta_1^6(1+\sqrt{\eta_1})^4\bigg(\frac{30\eta_2^2}{\eta_1^5}+\frac{3\eta_2^4}{4\eta_1^6}\bigg)+\eta_1^4\eta_2^4\cdot\frac{3}{4\eta_1^2}\bigg\}\\
&< \frac{32}{\alpha^2}(1+\sqrt{\eta_1})^4\bigg(15\eta_1^2+30\eta_1\eta_2^2+\frac{3}{2}\eta_2^4\bigg).
\end{align*}
We bound $R_{1,2}$ similarly:
\begin{align*}R_{1,2}&=\frac{4}{\alpha^2}\E\bigg[\bigg(\eta_1^{3/2}\eta_1\eta_2\bigg(\frac{1}{2\eta_1}+\frac{\eta_2^2}{4\eta_1^2}-X_1^2\bigg)+(\eta_1(1+\sqrt{\eta_1})+\eta_2^2)\bigg(X_1-\frac{\eta_2}{2\eta_1}\bigg)\bigg)^4\bigg]\\
&\leq  \frac{32}{\alpha^2}\bigg\{\eta_1^4\eta_2^4\E\bigg[\bigg(X_1^2-\frac{1}{2\eta_1}-\frac{\eta_2^2}{4\eta_1^2}\bigg)^4\bigg]+(\eta_1(1+\sqrt{\eta_1})+\eta_2^2)^4\E\bigg[\bigg(X_1-\frac{\eta_2}{2\eta_1}\bigg)^4\bigg]\bigg\}\\
&\leq\frac{32}{\alpha^2}\bigg\{\frac{\eta_2^4}{\eta_1^2}\big(15\eta_1^2+30\eta_1\eta_2^2+3\eta_2^4\big)+8(\eta_1^4(1+\sqrt{\eta_1})^4+\eta_2^8)\cdot\frac{3}{4\eta_1^2}\bigg\} \\
&=\frac{32}{\alpha^2}\bigg\{\frac{\eta_2^4}{\eta_1^2}\big(15\eta_1^2+30\eta_1\eta_2^2+9\eta_2^4\big)+6\eta_1^2(1+\sqrt{\eta_1})^4\bigg\}.
\end{align*}
Combining our bounds for $R_{1,1}$ and $R_{1,2}$ gives that
\begin{align}K_1(\boldsymbol{\eta}_0)&=14 \cdot 2^{5/4}\max_{1\leq j\leq 2}\bigg(\mathbb{E}\bigg[\bigg(\sum_{k=1}^{2}\tilde{V}_{j,k}\frac{\partial}{\partial\theta_k}\log\left(f(\boldsymbol{X}_1|\boldsymbol{\theta}_0)\right)\bigg)^4\bigg]\bigg)^{1/2}\nonumber\\
&< \frac{14\cdot2^{5/4}\cdot\sqrt{32}}{\alpha}\bigg((1+\sqrt{\eta_1})^4(15\eta_1^2+30\eta_1\eta_2^2+15\eta_2^4)+\frac{30\eta_2^6}{\eta_1}+\frac{9\eta_2^8}{\eta_1^2}\bigg)^{1/2} \nonumber\\
\label{term1bound}&<\frac{189}{\alpha}\bigg(15(1+\sqrt{\eta_1})^4(\eta_1+\eta_2^2)^2+\frac{3\eta_2^6}{\eta_1}\bigg(10+\frac{3\eta_2^2}{\eta_1}\bigg)\bigg)^{1/2}.
\end{align}
We now bound $K_3(\boldsymbol{\eta}_0)$, as given by
\begin{align}K_3(\boldsymbol{\eta}_0)&=\frac{1}{2}\sum_{k=1}^{2}\sum_{j=1}^{2}|\tilde{V}_{kj}|\sum_{l=1}^{2}\sum_{q=1}^2\sum_{\substack{\tilde{\eta}_m \in \left\lbrace\hat{\eta}_n(\boldsymbol{X})_m, \eta_{0,m}\right\rbrace\\m \in \left\lbrace 1,2\right\rbrace}}\E\big|Q_lQ_qM_{qlj}(\boldsymbol{\tilde{\eta}};\boldsymbol{X})\big|\nonumber\\
\label{term234}&=:\frac{1}{2}\sum_{k=1}^2\sum_{j=1}^2|\tilde{V}_{kj}|\sum_{l=1}^2\sum_{q=1}^2 R_{q,l,j}^{M_{qlj}}.
\end{align}
Here the superscript $M_{qlj}$ in $R_{q,l,j}^{M_{qlj}}$ emphasises the fact the quantity depends on the choice of dominating function $M_{qlj}$. In bounding $K_3(\boldsymbol{\eta}_0)$ we first note the following inequalities which will simplify the final bound:
\begin{align*}|\tilde{V}_{11}|+|\tilde{V}_{21}|&=\sqrt{\frac{2}{\alpha}}\big(\eta_1^{3/2}(1+\sqrt{\eta_1})+\eta_1|\eta_2|\big)\leq \frac{3}{2}\eta_1+2\eta_1^2+\frac{3}{2}\eta_2^2,\\
|\tilde{V}_{12}|+|\tilde{V}_{22}|&=\sqrt{\frac{2}{\alpha}}\big(\eta_1|\eta_2|+\eta_1(1+\sqrt{\eta_1})+\eta_2^2\big)\leq \frac{3}{2}\eta_1+2\eta_1^2+\frac{3}{2}\eta_2^2,
\end{align*}
which can be seen to hold from several applications of the simple inequality $ab\leq \frac{1}{2}(a^2+b^2)$.

From the formulas in (\ref{asdf}) we readily obtain that
\begin{align*}&\frac{\partial^3}{\partial \eta_1^3}\ell(\boldsymbol{\eta};\boldsymbol{x})=\frac{n}{\eta_1^3} + \frac{3n\eta_2^2}{2\eta_1^4}, \quad \frac{\partial^3}{\partial \eta_2^3}\ell(\boldsymbol{\eta};\boldsymbol{x})=0, \\
&\frac{\partial^3}{\partial \eta_1^2\partial\eta_2}\ell(\boldsymbol{\eta};\boldsymbol{x})=\frac{\partial^3}{\partial \eta_1\partial\eta_2\partial\eta_1}\ell(\boldsymbol{\eta};\boldsymbol{x})=\frac{\partial^3}{\partial \eta_2\partial\eta_1^2}\ell(\boldsymbol{\eta};\boldsymbol{x})=-\frac{n\eta_2}{\eta_1^3}, \\
&\frac{\partial^3}{\partial \eta_1\partial\eta_2^2}\ell(\boldsymbol{\eta};\boldsymbol{x})=\frac{\partial^3}{\partial \eta_2\partial\eta_1\partial\eta_2}\ell(\boldsymbol{\eta};\boldsymbol{x})=\frac{\partial^3}{\partial \eta_2^2\partial\eta_1}\ell(\boldsymbol{\eta};\boldsymbol{x})=\frac{n}{2\eta_1^2}.
\end{align*}
Therefore we can take
\begin{align*}&M_{111}(\tilde{\boldsymbol{\eta}},\boldsymbol{x})=\frac{n}{\eta_1^3} + \frac{3n\eta_2^2}{2\eta_1^4}, \quad M_{112}(\tilde{\boldsymbol{\eta}},\boldsymbol{x})=M_{121}(\tilde{\boldsymbol{\eta}},\boldsymbol{x})=M_{211}(\tilde{\boldsymbol{\eta}},\boldsymbol{x})=\frac{n|\eta_2|}{\eta_1^3},\\
& M_{122}(\tilde{\boldsymbol{\eta}},\boldsymbol{x})=M_{212}(\tilde{\boldsymbol{\eta}},\boldsymbol{x})=M_{221}(\tilde{\boldsymbol{\eta}},\boldsymbol{x})=\frac{n}{2\eta_1^2}, \quad M_{222}(\tilde{\boldsymbol{\eta}},\boldsymbol{x})=0.
\end{align*}
At this stage we note that $R_{2,2,2}^{M_{222}}=0$ and that $R_{1,2,1}^{M_{121}}=R_{1,1,2}^{M_{112}}$ and $R_{2,1,2}^{M_{212}}=R_{2,2,1}^{M_{221}}$.  Therefore we only need to bound $R_{1,1,1}^{M_{111}}$, $R_{2,1,1}^{M_{211}}$, $R_{1,1,2}^{M_{112}}$, $R_{2,1,2}^{M_{212}}$ and $R_{1,2,2}^{M_{122}}$.  In order to bound each of these terms, we must consider four cases: (A) $\tilde{\boldsymbol{\eta}}=(\eta_1, \eta_2)$, (B) $\tilde{\boldsymbol{\eta}}=(\hat{\eta_1}, \eta_2)$, (C) $\tilde{\boldsymbol{\eta}}=(\eta_1, \hat{\eta_2})$ and (D) $\tilde{\boldsymbol{\eta}}=(\hat{\eta_1}, \hat{\eta_2})$.  It will be convenient to write $R_{1,1,1}^{M_{111},A}=\E\big|Q_lQ_qM_{qlj}((\eta_1,\eta_2);\boldsymbol{X})\big|$, with the notation $R_{1,1,1}^{M_{111},B}$, $R_{1,1,1}^{M_{111},C}$ and $R_{1,1,1}^{M_{111},D}$ defined in the obvious manner. 

We first bound $R_{1,1,1}^{M_{111}}$.  We consider the four case (A), (B), (C) and (D), and bound the terms by using the Cauchy-Schwarz inequality and the bounds of Lemma \ref{momentlem}: 
\begin{align*}R_{1,1,1}^{M_{111},A}=\E\bigg[Q_1^2\bigg(\frac{n}{\eta_1^3} + \frac{3n\eta_2^2}{2\eta_1^4}\bigg)\bigg]\leq\frac{1}{\eta_1^2}(15\eta_1+3\eta_2^2),
\end{align*}
\begin{align*}R_{1,1,1}^{M_{111},B}=\E\bigg[Q_1^2\bigg(\frac{n}{\hat{\eta}_1^3} + \frac{3n\eta_2^2}{2\hat{\eta}_1^4}\bigg)\bigg]\leq n\sqrt{\E[Q_1^4]\E[\hat{\eta}_1^{-6}]}+\frac{3\eta_2^2n}{2}\sqrt{\E[Q_1^4]\E[\hat{\eta}_1^{-8}]}<\frac{1}{\eta_1^2}(221\eta_1+126\eta_2^2),
\end{align*}
\begin{align*}R_{1,1,1}^{M_{111},C}&=\E\bigg[Q_1^2\bigg(\frac{n}{\eta_1^3} + \frac{3n\hat{\eta}_2^2}{2\eta_1^4}\bigg)\bigg]\leq \frac{n}{\eta_1^3}\E[Q_1^2]+\frac{3n}{2\eta_1^4}\sqrt{\E[Q_1^4]\E[\hat{\eta}_2^{4}]}\\
&<\frac{10}{\eta_1}+\frac{3}{2\eta_1^4}\sqrt{6958\eta_1^4(69\eta_1^2+153\eta_2^4)}<\frac{1}{\eta_1^2}(1050\eta_1+1548\eta_2^2),
\end{align*}
\begin{align*}R_{1,1,1}^{M_{111},D}&=\E\bigg[Q_1^2\bigg(\frac{n}{\hat{\eta}_1^3} + \frac{3n\hat{\eta}_2^2}{2\hat{\eta}_1^4}\bigg)\bigg]\leq \sqrt{\E[Q_1^4]\E[\hat{\eta}_1^{-6}]}+\frac{3}{2}\sqrt{\E[Q_1^4]\E\bigg[\frac{\hat{\eta}_2^4}{\hat{\eta}_1^8}\bigg]} \\
&<\frac{1}{\eta_1}\sqrt{6958\times 7}+\frac{3}{2\eta_1^2}\sqrt{6958\times2(\eta_1^2+2\eta_2^4)}<\frac{1}{\eta_1^2}(398\eta_1+251\eta_2^2).
\end{align*}
Thus,
\[R_{1,1,1}^{M_{111}}<\frac{1684}{\eta_1}+\frac{1928\eta_2^2}{\eta_1^2}.\]
Similar calculations (which are given in Appendix \ref{appnor4}) show that
\begin{align*}&R_{2,1,1}^{M_{211}}<\frac{168}{\sqrt{\eta_1}}+\frac{494|\eta_2|}{\eta_1},\quad R_{1,1,2}^{M_{112}}<\frac{386}{\eta_1}+\frac{746\eta_2^2}{\eta_1^2}, \\
&R_{2,1,2}^{M_{212}}<\frac{122}{\sqrt{\eta_1}}+\frac{146|\eta_2|}{\eta_1},\quad
R_{1,2,2}^{M_{122}}<\frac{116}{\eta_1}+\frac{164\eta_2^2}{\eta_1^2}.
\end{align*}
Applying these bounds to (\ref{term234}) yields the following bound: 
\begin{align}
K_3(\boldsymbol{\eta}_0)&\leq\frac{1}{2\sqrt{2\alpha}}(3\eta_1+4\eta_1^2+3\eta_2^2)\bigg[\bigg(\frac{1684}{\eta_1}+\frac{1928\eta_2^2}{\eta_1^2}\bigg)+\bigg(\frac{386}{\eta_1}+\frac{746\eta_2^2}{\eta_1^2}\bigg)+\bigg(\frac{168}{\sqrt{\eta_1}}+\frac{494|\eta_2|}{\eta_1}\bigg)\nonumber\\
&\quad+\bigg(\frac{122}{\sqrt{\eta_1}}+\frac{146|\eta_2|}{\eta_1}\bigg)+\bigg(\frac{386}{\eta_1}+\frac{746\eta_2^2}{\eta_1^2}\bigg)+\bigg(\frac{116}{\eta_1}+\frac{164\eta_2^2}{\eta_1^2}\bigg)+\bigg(\frac{122}{\sqrt{\eta_1}}+\frac{146|\eta_2|}{\eta_1}\bigg)\bigg]\nonumber\\
\label{term2bound}&\quad=\frac{1}{\sqrt{2\alpha}}(3\eta_1+4\eta_1^2+3\eta_2^2)\bigg[\frac{206}{\sqrt{\eta_1}}+\frac{1286}{\eta_1}+\frac{393|\eta_2|}{\eta_1}+\frac{1792\eta_2^2}{\eta_1^2}\bigg].
\end{align}
Finally, summing up the bounds (\ref{term1bound}) and (\ref{term2bound}) completes the proof. \hfill $\Box$

\subsection{The multivariate normal distribution under non-canonical parametrisation}\label{sec4.3nor}
\subsubsection{Diagonal covariance matrix}
\label{subsec::diagonal_cov_matrix}
Let $\boldsymbol{X}_1,\ldots,\boldsymbol{X}_n$ be i.i.d$.$ $\mathrm{MVN}(\boldsymbol{\mu},\Sigma)$ random variables, where $\boldsymbol{\mu}=(\mu_1,\ldots,\mu_p)^{\intercal}$ and $\Sigma=\mathrm{diag}(\sigma_1^2,\ldots,\sigma_p^2)$.  Here $\boldsymbol{\theta}_0=(\mu_1,\ldots,\mu_p,\sigma_1^2,\ldots,\sigma_p^2)^{\intercal}$.  The density function here is
\begin{equation*}f(\boldsymbol{x}|\boldsymbol{\theta})=\frac{1}{(2\pi)^{p/2}\sqrt{\sigma_1^2\cdots\sigma_p^2}}\exp\bigg\{-\sum_{i=1}^p\frac{(x_i-\mu_i)^2}{2\sigma_i^2}\bigg\}, \quad \boldsymbol{x}=(x_1,\ldots,x_p)^{\intercal}\in\mathbb{R}^p.
\end{equation*}
For $1\leq j\leq p$, let $\bar{X}_j$ \gr{denote the sample mean of $X_{1,j},\ldots,X_{n,j}$}. Then it is well-known in this case that the MLE is unique and equal to
\begin{equation*}\hat{\boldsymbol{\theta}}_n(\boldsymbol{X})=\bigg(\bar{X}_1,\ldots\bar{X}_p,\frac{1}{n}\sum_{i=1}^n(X_{i,1}-\bar{X}_1)^2,\ldots,\frac{1}{n}\sum_{i=1}^n(X_{i,p}-\bar{X}_p)^2\bigg)^{\intercal}.
\end{equation*}
%\begin{equation*}\hat{\boldsymbol{\theta}}_n(\boldsymbol{X})=\bigg(\bar{\boldsymbol{X}},\frac{1}{n}\sum_{i=1}^n(\boldsymbol{X}_i-\bar{\boldsymbol{X}})(\boldsymbol{X}_i-\bar{\boldsymbol{X}})^T\bigg)^T.
%\end{equation*}

Let $\boldsymbol{W}=\sqrt{n}[I(\boldsymbol{\theta}_0)]^{1/2}\big(\hat{\boldsymbol{\theta}}_n(\boldsymbol{X})-\boldsymbol{\theta}_0\big)$.  Then it is readily checked that all the assumptions of Theorem \ref{Theorem_multi} are met and so an application of the bound (\ref{final_bound_regression}) would yield a bound of the form $d_{\mathrm{W}}(\boldsymbol{W},\boldsymbol{Z})\leq Cn^{-1/2}$, where $\boldsymbol{Z}\sim\mathrm{MVN}(\boldsymbol{0},I_{2p})$, for some constant $C$ that does not depend on $n$.  However, the term $K_3(\boldsymbol{\theta}_0)$ \gr{has} a very poor dependence on the dimension $d$ and would be tedious to compute.  Instead, we take advantage of the particular representation of the MLE to derive a neat optimal $\mathcal{O}(n^{-1/2})$ \gr{1-}Wasserstein distance \gr{(and 2-Wasserstein distance)} bound with good dependence on the dimension.  \gr{In deriving this bound we make use of Theorem \ref{bonisthm}.}
%, and it is important to note that without the very recent work on optimal order Wasserstein distance bounds for CLTs by Stein's method such a bound would not have been achievable.

\begin{theorem}\label{thmdiag}  Let $\boldsymbol{X}_1,\ldots,\boldsymbol{X}_n$ be i.i.d$.$ $\mathrm{MVN}(\boldsymbol{\mu},\Sigma)$ random vectors, where $\boldsymbol{\mu}=(\mu_1,\ldots,\mu_p)^{\intercal}$ and $\Sigma=\mathrm{diag}(\sigma_1^2,\ldots,\sigma_p^2)$.  
%Let $\boldsymbol{\theta}_0=(\mu_1,\ldots,\mu_d,\sigma_1^2,\ldots,\sigma_d^2)^T$.  
Let $\boldsymbol{W}=\sqrt{n}[I(\boldsymbol{\theta}_0)]^{1/2}\big(\hat{\boldsymbol{\theta}}_n(\boldsymbol{X})-\boldsymbol{\theta}_0\big)$ and $\boldsymbol{Z}\sim\mathrm{MVN}(\boldsymbol{0},I_{2p})$.  Then
\begin{equation}\label{bound78}d_{\mathrm{W}}(\boldsymbol{W},\boldsymbol{Z})\gr{\leq d_{\mathrm{W}_2}(\boldsymbol{W},\boldsymbol{Z})<56\sqrt{\frac{p}{n}}}.
\end{equation}
\end{theorem}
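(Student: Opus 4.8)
The plan is to exploit the product structure that the diagonal covariance forces on both the MLE and its Fisher information, thereby reducing the $2p$-dimensional problem to $p$ identical one-dimensional problems. First I would record that, because $\Sigma$ is diagonal and the Gaussian mean and variance parameters are Fisher-orthogonal, the information matrix is diagonal, $I(\boldsymbol{\theta}_0)=\mathrm{diag}(\sigma_1^{-2},\dots,\sigma_p^{-2},(2\sigma_1^4)^{-1},\dots,(2\sigma_p^4)^{-1})$. Hence $[I(\boldsymbol{\theta}_0)]^{1/2}$ is diagonal and the components of $\boldsymbol{W}$ are simply $W_j=\sqrt{n}(\bar{X}_j-\mu_j)/\sigma_j$ and $W_{p+j}=\sqrt{n}(\hat{\sigma}_j^2-\sigma_j^2)/(\sqrt{2}\sigma_j^2)$ for $j=1,\dots,p$, where $\hat{\sigma}_j^2=\frac1n\sum_i(X_{i,j}-\bar{X}_j)^2$. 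Writing $Y_{i,j}=(X_{i,j}-\mu_j)/\sigma_j\sim\mathrm{N}(0,1)$, each pair $\boldsymbol{W}^{(j)}:=(W_j,W_{p+j})$ is a function of the $j$-th coordinate data $(X_{1,j},\dots,X_{n,j})$ alone; since $\Sigma$ is diagonal these coordinate-blocks are mutually independent, and each $\boldsymbol{W}^{(j)}$ has the same, parameter-free, distribution.

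Next I would tensorize. As $\boldsymbol{W}$ is a concatenation of the independent blocks $\boldsymbol{W}^{(j)}$ and $\boldsymbol{Z}\sim\mathrm{MVN}(\boldsymbol{0},I_{2p})$ splits into independent blocks $\boldsymbol{Z}^{(j)}\sim\mathrm{MVN}(\boldsymbol{0},I_2)$ (after the harmless reordering that groups coordinate $j$ with coordinate $p+j$, allowed since the Euclidean norm and the law of $\boldsymbol{Z}$ are permutation invariant), using an independent optimal coupling in each block gives $d_{\mathrm{W}_2}(\boldsymbol{W},\boldsymbol{Z})^2\le\sum_{j=1}^p d_{\mathrm{W}_2}(\boldsymbol{W}^{(j)},\boldsymbol{Z}^{(j)})^2$. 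Within a block I would invoke the classical fact that for Gaussian data $\bar{X}_j$ and $\hat{\sigma}_j^2$ are independent and $W_j\sim\mathrm{N}(0,1)$ exactly; coupling $Z_j:=W_j$ at zero cost and coupling $W_{p+j}$ optimally to an independent $Z_{p+j}$ yields $d_{\mathrm{W}_2}(\boldsymbol{W}^{(j)},\boldsymbol{Z}^{(j)})\le d_{\mathrm{W}_2}(W_{p+j},Z)$ with $Z\sim\mathrm{N}(0,1)$. Since all blocks are identically distributed, this reduces the whole problem to a single scalar estimate, namely $d_{\mathrm{W}_2}(\boldsymbol{W},\boldsymbol{Z})\le\sqrt{p}\,d_{\mathrm{W}_2}(W^{(\sigma)},Z)$, where $W^{(\sigma)}=\frac{1}{\sqrt{2n}}\big(\sum_{i=1}^n(Y_i-\bar{Y})^2-n\big)$ for i.i.d. standard normal $Y_i$.

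For the scalar term I would separate the sample-mean correction from a genuine normalized i.i.d. sum. Writing $\tilde{W}=\frac{1}{\sqrt{n}}\sum_{i=1}^n\zeta_i$ with $\zeta_i=\frac{1}{\sqrt2}(Y_i^2-1)$ (so $\mathbb{E}[\zeta_1]=0$ and $\mathrm{Var}(\zeta_1)=1$), the identity $\sum_i(Y_i-\bar{Y})^2=\sum_iY_i^2-n\bar{Y}^2$ gives $W^{(\sigma)}=\tilde{W}-(\sqrt{n}\bar{Y})^2/\sqrt{2n}$, whose remainder has $L^2$-norm $\sqrt{3/(2n)}$ because $\sqrt{n}\bar{Y}\sim\mathrm{N}(0,1)$ and so the squared term has fourth moment $3$. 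By the triangle inequality $d_{\mathrm{W}_2}(W^{(\sigma)},Z)\le d_{\mathrm{W}_2}(\tilde{W},Z)+\sqrt{3/(2n)}$, and applying the univariate case ($d=1$) of the Bonis bound (\ref{bonisbound2}) to $\tilde{W}$ gives $d_{\mathrm{W}_2}(\tilde{W},Z)\le\frac{14}{\sqrt{n}}\sqrt{\mathbb{E}[\zeta_1^4]}=\frac{14}{\sqrt{n}}\sqrt{15}$, using $\mathbb{E}[(Y^2-1)^4]=60$. Summing the two contributions gives $d_{\mathrm{W}_2}(W^{(\sigma)},Z)<56/\sqrt{n}$, whence $d_{\mathrm{W}_2}(\boldsymbol{W},\boldsymbol{Z})<56\sqrt{p/n}$; the bound $d_{\mathrm{W}}\le d_{\mathrm{W}_2}$ from (\ref{pqpq}) then completes the proof.

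The main obstacle is securing the advertised $\sqrt{p}$ dependence on the dimension rather than a cruder power. Applying Theorem \ref{bonisthm} directly to the full $2p$-dimensional standardized score is fatal here: the governing quantity $\|\mathbb{E}[\boldsymbol{\xi}_1\boldsymbol{\xi}_1^{\intercal}|\boldsymbol{\xi}_1|^2]\|_F$ grows like $p^{3/2}$, since its diagonal entries pick up an $\mathcal{O}(p)$ contribution from $|\boldsymbol{\xi}_1|^2$, so even the sharp Frobenius-norm bound (\ref{bonisbound}) would yield only an $\mathcal{O}(p/\sqrt{n})$ estimate. The resolution, and the crux of the argument, is to tensorize \emph{before} invoking any quantitative central limit theorem, so that Bonis is applied only in dimension one, and to use the exact normality and independence of the sample mean to discard the mean coordinates altogether; both moves are essential for keeping the constant down to $56$.
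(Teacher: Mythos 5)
Your proposal is correct and follows essentially the same route as the paper's proof: exploit the independence of the sample means and sample variances to tensorize the $2$-Wasserstein distance over independent components, discard the mean coordinates (which are exactly standard normal), remove the $(\bar{Y}_j)^2$ correction at $L^2$-cost $\sqrt{3/(2n)}$ via the triangle inequality, and apply the univariate Bonis bound to $\frac{1}{\sqrt{n}}\sum_{i=1}^n (Y_{i,j}^2-1)/\sqrt{2}$, arriving at the same constant $\sqrt{3/2}+14\sqrt{15}<56$. The only cosmetic difference is that you group the coordinates into $p$ independent two-dimensional blocks before coupling, whereas the paper couples all $2p$ independent scalar coordinates directly.
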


\begin{remark}\label{remcanex}Corollary 3.1 of \cite{ag19} gave a bound in the weaker $d_{1,2}$ metric for the case that $X_1,\ldots,X_n$ are i.i.d$.$ $\mathrm{N}(\mu,\sigma^2)$ random variables.  Theorem \ref{thmdiag} generalises the setting from $p=1$ to $p\geq1$ and gives a bound in the stronger \gr{1-}Wasserstein distance. \gr{Our bound shows that the MLE converges in distribution to the multivariate normal distribution for even large $p$ provided $p\ll n$.  We believe that the dependence on the dimension $p$ in our bound is optimal, and this seems to be supported by empirical results in Section \ref{sec4.56}.}
% is quite good and the bound takes a neat form.  It seems natural to conjecture that for large $n$ and large $p$ the Wasserstein distance $d_{\mathrm{W}}(\boldsymbol{W},\boldsymbol{Z})$ would scale like $\sqrt{p/n}$, and we provide some evidence for this with some empirical results in Section \ref{sec4.56}.  This suggests that we have convergence of the MLE to the multivariate normal distribution for even large $p$ provided $p\ll n$.  With our result we know this is the case provided $p^2\ll n$. %The bound in Theorem \ref{thmdiag} does not depend on any of the unknown parameters, which is a desirable feature, because the same argument as the one used in Remark \ref{remcanex1} shows that $d_{\mathrm{W}}(\boldsymbol{W},\boldsymbol{Z})$ does not depend on $\boldsymbol{\theta}_0$ in this example.  
%\gr{We should comment on dependence on dimension $d$ and maybe carry out some simulations to try to gain insight as to what the true dependence of the bound on $d$ is.}
\end{remark}

\begin{proof}\gr{The inequality $d_{\mathrm{W}}(\boldsymbol{W},\boldsymbol{Z})\leq d_{\mathrm{W}_2}(\boldsymbol{W},\boldsymbol{Z})$ is immediate from (\ref{pqpq}), and the rest of the proof is devoted to bounding $d_{\mathrm{W}_2}(\boldsymbol{W},\boldsymbol{Z})$. We begin by recalling} the standard result that the expected Fisher information matrix is given by 
\[I(\boldsymbol{\theta}_0)=\mathrm{diag}\bigg(\frac{1}{\sigma_1^2},\ldots,\frac{1}{\sigma_p^2},\frac{1}{2\sigma_1^4},\ldots,\frac{1}{2\sigma_p^4}\bigg),\]
and therefore
\[[I(\boldsymbol{\theta}_0)]^{1/2}=\mathrm{diag}\bigg(\frac{1}{\sigma_1},\ldots,\frac{1}{\sigma_p},\frac{1}{\sqrt{2}\sigma_1^2},\ldots,\frac{1}{\sqrt{2}\sigma_p^2}\bigg).\]
Now, for $1\leq i\leq n$, write $\boldsymbol{X}_i=(X_{i,1},\ldots,X_{i,p})^{\intercal}$, and define the standardised random variables $Y_{i,j}=(X_{i,j}-\mu_j)/\sigma_j$, $1\leq i\leq n$, $1\leq j\leq p$.  For $1\leq j\leq p$, let $\bar{X}_j$ and $\bar{Y}_j$ denote the sample means of $X_{1,j},\ldots,X_{n,j}$ and $Y_{1,j},\ldots,Y_{n,j}$. A simple calculation gives the useful equation
\[\sum_{i=1}^n(X_{i,j}-\bar{X}_j)^2=\sum_{i=1}^n(X_{i,j}-\mu_{j})^2-n(\bar{X}_j-\mu_j)^2.\]
Putting all this together gives that $\boldsymbol{W}$ can be written as $\boldsymbol{W}=(W_{1},\ldots,W_{2p})^{\intercal}$, where, for $1\leq j\leq p$,
\begin{align*}W_{j}=\frac{1}{\sqrt{n}}\sum_{i=1}^n\frac{X_{i,j}-\mu_j}{\sigma_j}=\frac{1}{\sqrt{n}}\sum_{i=1}^n Y_{i,j}
\end{align*}
and
\begin{align*}W_{j+p}&=\frac{1}{\sqrt{n}}\sum_{i=1}^n\frac{(X_{i,j}-\mu_j)^2-\sigma_j^2}{\sqrt{2}\sigma_j^2}-\sqrt{n}\frac{(\bar{X}_j-\mu_j)^2}{\sqrt{2}\sigma_j^2} =\frac{1}{\sqrt{n}}\sum_{i=1}^n\frac{Y_{i,j}^2-1}{\sqrt{2}}-\frac{\sqrt{n}}{\sqrt{2}}(\bar{Y}_{j})^2.
\end{align*}
It will be useful to define $\boldsymbol{V}=(V_{1},\ldots,V_{2p})^{\intercal}$, where, for $1\leq j\leq p$, 
\[V_{j}=W_{j} \quad \text{and} \quad V_{j+p}=\frac{1}{\sqrt{n}}\sum_{i=1}^n\frac{Y_{i,j}^2-1}{\sqrt{2}}.\]

\gr{We now note that $\bar{X}_1,\ldots\bar{X}_p,\frac{1}{n}\sum_{i=1}^n(X_{i,1}-\bar{X}_1)^2,\ldots,\frac{1}{n}\sum_{i=1}^n(X_{i,p}-\bar{X}_p)^2$ are independent (see Section 3b.3 of \cite{r73}), from which it follows that $W_1,\ldots,W_{2p}$ are independent. As the infimum in the definition (\ref{pwasdefn}) of the 2-Wasserstein distance is attained, for each $j=1,\ldots,2p$ we may construct a probability space on which the random variables $W_j^*$ and $Z_j^*$ with $\mathcal{L}(W_j^*)=\mathcal{L}(W_j)$ and $\mathcal{L}(Z_j^*)=\mathcal{L}(Z_j)$ are such that $d_{\mathrm{W}_2}(W_j,Z_j)=\sqrt{\mathbb{E}[(W_j^*-Z_j^*)^2]}$. By independence, on taking the product of these probabilities spaces, we can construct random vectors $\boldsymbol{W}^*=(W_1^*,\ldots,W_{2p}^*)^\intercal$ and $\boldsymbol{Z}^*=(Z_1^*,\ldots,Z_{2p}^*)^\intercal$ with $\mathcal{L}(\boldsymbol{W}^*)=\mathcal{L}(\boldsymbol{W})$ and $\mathcal{L}(\boldsymbol{Z}^*)=\mathcal{L}(\boldsymbol{Z})$ such that   $d_{\mathrm{W}_2}(\boldsymbol{W},\boldsymbol{Z})=\sqrt{\mathbb{E}[|\boldsymbol{W}^*-\boldsymbol{Z}^*|^2]}$. Therefore
\begin{align}\label{nearr}d_{\mathrm{W}_2}(\boldsymbol{W},\boldsymbol{Z})=\sqrt{\mathbb{E}[|\boldsymbol{W}^*-\boldsymbol{Z}^*|^2]}=\sqrt{\sum_{j=1}^{2p}\mathbb{E}[(W_j^*-Z_j^*)^2]}=\sqrt{\sum_{j=1}^{2p}d_{\mathrm{W}_2}(W_j,Z_j)^2}.
\end{align}
For $j=1,\ldots,p$, $W_j\sim \mathrm{N}(0,1)$, and so $d_{\mathrm{W}_2}(W_j,Z_j)=0$ for $j=1,\ldots,p$. Now suppose $j\in\{p+1,\ldots,2p\}$. Then, by the triangle inequality,
\begin{equation}\label{tri22}d_{\mathrm{W}_2}(W_j,Z_j)\leq d_{\mathrm{W}_2}(W_j,V_j)+d_{\mathrm{W}_2}(V_j,Z_j).
\end{equation}
By the definition of the 2-Wasserstein distance,
\begin{align*}d_{\mathrm{W}_2}(W_j,Z_j)&\leq\sqrt{\mathbb{E}\bigg[\bigg(\bigg(\sum_{i=1}^n\frac{Y_{ij}^2-1}{\sqrt{2}}-\frac{\sqrt{n}}{\sqrt{2}}(\bar{Y}_j)^2\bigg)-\sum_{i=1}^n\frac{Y_{ij}^2-1}{\sqrt{2}}\bigg)^2\bigg]}=\frac{\sqrt{n}}{\sqrt{2}}\sqrt{\mathbb{E}[(\bar{Y}_j)^2]}=\sqrt{\frac{3}{2n}},
\end{align*}
where we used that $\bar{Y}_j\sim\mathrm{N}(0,\frac{1}{n})$, so that $\mathbb{E}[(\bar{Y}_j)^4]=\frac{3}{n^2}$.

To bound $d_{\mathrm{W}_2}(V_j,Z_j)$, we apply Theorem \ref{bonisthm} in the univariate case $d=1$. We can write $V_j=\frac{1}{\sqrt{n}}\sum_{i=1}^n\xi_{i,j}$, where $\xi_{1,j},\ldots,\xi_{n,j}$ are i.i.d$.$ random variables with $\xi_{i,j}=\frac{1}{\sqrt{2}}(Y_{i,1}^2-1)$, $i=1,\ldots,n$.
%\[\boldsymbol{\xi}_i=\bigg(Y_{i,1},\ldots,Y_{i,d},\frac{Y_{i,1}^2-1}{\sqrt{2}},\ldots,\frac{Y_{i,d}^2-1}{\sqrt{2}}\bigg)^\intercal, \quad i=1,\ldots,n.\] 
We note that that the assumptions $\mathbb{E}[\xi_{1,j}] = 0$ and $\mathbb{E}[\xi_{1,j}^2] =1$ are satisfied. Applying the bound (\ref{bonisbound}) of Theorem \ref{bonisthm} now yields, for $j=p+1,\ldots,2p$,
\begin{align*}d_{\mathrm{W}_2}(V_j,Z_j)\leq\frac{14}{\sqrt{n}}\sqrt{\mathbb{E}[\xi_{1,j}^4]}=\frac{7}{\sqrt{n}}\sqrt{\mathbb{E}[(Y_{1,j}^2-1)^4]}=\frac{7}{\sqrt{n}}\sqrt{\mathbb{E}[(Z^2-1)^4]}=\frac{14\sqrt{15}}{\sqrt{n}},
\end{align*}
where we used that $Y_{1,j}=_d Z\sim \mathrm{N}(0,1)$, and the final equality follows from an application of standard formulas for moments of the normal distribution. Substituting our bounds for $d_{\mathrm{W}_2}(W_j,V_j)$ and $d_{\mathrm{W}_2}(V_j,Z_j)$ into (\ref{tri22}) gives that, for $j=p+1,\ldots,2p$, 
\begin{equation*}d_{\mathrm{W}_2}(W_j,Z_j)\leq\bigg(\sqrt{\frac{3}{2}}+14\sqrt{15}\bigg)\frac{1}{\sqrt{n}},
\end{equation*}
and plugging this bound into (\ref{nearr}) yields
\[d_{\mathrm{W}_2}(\boldsymbol{W},\boldsymbol{Z})\leq\sqrt{p\bigg(\sqrt{\frac{3}{2}}+14\sqrt{15}\bigg)^2\frac{1}{n}}<56\sqrt{\frac{p}{n}},\]
as required.}
\end{proof}

\subsubsection{The general case}

Let $\boldsymbol{X}_1,\ldots,\boldsymbol{X}_n$ be i.i.d$.$ $\mathrm{MVN}(\boldsymbol{\mu},\Sigma)$ random vectors, where $\boldsymbol{\mu}=(\mu_1,\ldots,\mu_p)^{\intercal}$ and $\Sigma=(\sigma_{i,j})$.  Here $\boldsymbol{\theta}_0=(\mu_1,\ldots,\mu_p,\sigma_{1,1},\ldots,\sigma_{1,p},\ldots\sigma_{p,1},\ldots,\sigma_{p,p})^{\intercal}$.
%$\boldsymbol{\theta}_0=(\mu_1,\ldots,\mu_d,\sigma_{1,1},\ldots,\sigma_{d,d},\sigma_{1,2},\ldots \sigma_{1,d},\sigma_{2,3},\ldots,\sigma_{2,d},\ldots, \sigma_{d-2,d-1}, \sigma_{d-2,d},\sigma_{d-1,d})^T$.  
The density function here is
\begin{equation*}f(\boldsymbol{x}|\boldsymbol{\theta})=\frac{1}{(2\pi)^{p/2}\sqrt{\mathrm{det}(\Sigma)}}\exp\bigg\{-\frac{1}{2}(\boldsymbol{x}-\boldsymbol{\mu})^{\intercal}\Sigma^{-1} (\boldsymbol{x}-\boldsymbol{\mu})\bigg\}, \quad \boldsymbol{x}=(x_1,\ldots,x_p)^{\intercal}\in\mathbb{R}^p.
\end{equation*}
It is well-known in this case that the MLE is unique and equal to
$\hat{\boldsymbol{\theta}}_n(\boldsymbol{X})=\big(\bar{\boldsymbol{X}},\frac{1}{n}\sum_{i=1}^n(\boldsymbol{X}_i-\bar{\boldsymbol{X}})(\boldsymbol{X}_i-\bar{\boldsymbol{X}})^{\intercal}\big)^{\intercal}.
$
Since the covariance matrix $\Sigma$ and its MLE estimator $\hat{\Sigma}$ are symmetric, for the purpose of presenting a multivariate normal approximation for the MLE we restrict $\boldsymbol{\theta}_0$ to only include $\sigma_{i,j}$, $i\geq j$, and $\hat{\boldsymbol{\theta}}_n(\boldsymbol{X})$ to only include the estimators $\hat{\sigma}_{i,j}$, $i\geq j$.  This restricted MLE has $p+\binom{p}{2}=p(p+3)/2$ parameters.  As in diagonal case, we could apply Theorem \ref{Theorem_multi} to obtain a optimal order $\mathcal{O}(n^{-1/2})$ \gr{1-}Wasserstein distance bound, but we prefer to proceed as we did there and exploit the particular representation of the MLE in deriving our bound.

%\begin{theorem}\label{thmnondiag}  Let $n >7$ and let $\boldsymbol{X}_1,...,\boldsymbol{X}_n$ be i.i.d$.$ $\mathrm{MVN}(\boldsymbol{\mu},\Sigma)$ random variables, where $\boldsymbol{\mu}=(\mu_1,\ldots,\mu_d)^T$ and the covariance matrix $\Sigma=(\sigma_{ij})\in\mathbb{R}^{d\times d}$ is positive semi-definite.  
%Let $\boldsymbol{\theta}_0=(\mu_1,\ldots,\mu_d,\sigma_1^2,\ldots,\sigma_d^2)^T$.  
%Let $\boldsymbol{W}=\sqrt{n}[I(\boldsymbol{\theta}_0)]^{1/2}\big(\hat{\boldsymbol{\theta}}_n(\boldsymbol{X})-\boldsymbol{\theta}_0\big)$.  Let $\boldsymbol{Z}\sim\mathrm{MVN}(\boldsymbol{0},I_{d(d+3)/2})$.  Then there exists an absolute constant $C$ not involving $n$ and $d$ (but possibly involving $\boldsymbol{\theta}_0$) such that
%\begin{equation*}d_{\mathrm{W}}(\mathcal{L}(\boldsymbol{W}),\mathcal{L}(\boldsymbol{Z}))\leq\frac{C d^{5/2}}{\sqrt{n}}.
%\end{equation*}
%\end{theorem}

The proof of the following theorem follows a similar \gr{basic} approach to that of Theorem \ref{thmdiag}\gr{, again making use of Theorem \ref{bonisthm}, although as the components of the random vector $\boldsymbol{W}$ are now no longer independent our calculations are a little more involved, as we cannot reduce the problem to the univariate case as we did in proving Theorem \ref{thmdiag}. We defer the proof} to Appendix \ref{appnorgen}. For a matrix $A$, let $\|A\|_{\mathrm{max}}=\max_{i,j}|a_{i,j}|$.

%The proof of the following theorem follows a similar approach to that of Theorem \ref{thmdiag}, and is therefore deferred to Appendix \ref{appnorgen}. For a matrix $A$, let $\|A\|_{\mathrm{max}}=\max_{i,j}|a_{i,j}|$.

\begin{theorem}\label{thmnondiag}  Let $\boldsymbol{X}_1,\ldots,\boldsymbol{X}_n$ be i.i.d$.$ $\mathrm{MVN}(\boldsymbol{\mu},\Sigma)$ random vectors, where $\boldsymbol{\mu}=(\mu_1,\ldots,\mu_p)^{\intercal}$ and  $\Sigma=(\sigma_{ij})\in\mathbb{R}^{p\times p}$ is positive semi-definite.  
%Let $\boldsymbol{\theta}_0=(\mu_1,\ldots,\mu_d,\sigma_1^2,\ldots,\sigma_d^2)^T$.  
Let $\hat{\boldsymbol{\theta}}_n(\boldsymbol{X})$ be the MLE restricted in the manner as described above.  Let $\boldsymbol{W}=\sqrt{n}[I(\boldsymbol{\theta}_0)]^{1/2}\big(\hat{\boldsymbol{\theta}}_n(\boldsymbol{X})-\boldsymbol{\theta}_0\big)$ and $\boldsymbol{Z}\sim\mathrm{MVN}(\boldsymbol{0},I_{p(p+3)/2})$.  Write $\sigma_*^2=\max_{1\leq j\leq p}\sigma_{jj}$ (the largest variance in the covariance matrix $\Sigma$).  Then
\begin{equation*}d_{\mathrm{W}}(\boldsymbol{W},\boldsymbol{Z})<\frac{1}{\sqrt{n}}\Big(p^4\sigma_*^2\|[I(\boldsymbol{\theta}_0)]^{1/2}\|_{\mathrm{max}}+15.1\,p^{13/4}(p+3)^{13/4}\sigma_*^4\|[I(\boldsymbol{\theta}_0)]^{1/2}\|_{\mathrm{max}}^2\Big).
\end{equation*}
\end{theorem}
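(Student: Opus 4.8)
The plan is to mimic the proof of Theorem~\ref{thmdiag}: split $\boldsymbol{W}$ into a normalised sum of i.i.d.\ vectors, to which Theorem~\ref{bonisthm} applies, plus a small remainder arising because the sample covariance is centred at $\bar{\boldsymbol{X}}$ rather than at $\boldsymbol{\mu}$. Writing $\boldsymbol{W}=[I(\boldsymbol{\theta}_0)]^{1/2}\sqrt{n}\big(\hat{\boldsymbol{\theta}}_n(\boldsymbol{X})-\boldsymbol{\theta}_0\big)$, I would first record the exact identities $\bar{X}_k-\mu_k=\frac1n\sum_{i=1}^n(X_{i,k}-\mu_k)$ and
\begin{equation*}\hat{\sigma}_{kl}-\sigma_{kl}=\frac1n\sum_{i=1}^n\big[(X_{i,k}-\mu_k)(X_{i,l}-\mu_l)-\sigma_{kl}\big]-(\bar{X}_k-\mu_k)(\bar{X}_l-\mu_l).\end{equation*}
These let me write $\sqrt{n}\big(\hat{\boldsymbol{\theta}}_n(\boldsymbol{X})-\boldsymbol{\theta}_0\big)=\frac1{\sqrt n}\sum_{i=1}^n\boldsymbol{\zeta}_i-\boldsymbol{R}$, where $\boldsymbol{\zeta}_i$ collects the linear terms $(X_{i,j}-\mu_j)$ and the quadratic terms $(X_{i,k}-\mu_k)(X_{i,l}-\mu_l)-\sigma_{kl}$ ($k\ge l$), and the remainder $\boldsymbol{R}$ is supported on the covariance coordinates with entries $\sqrt{n}(\bar{X}_k-\mu_k)(\bar{X}_l-\mu_l)$. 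Setting $\boldsymbol{V}=\frac1{\sqrt n}\sum_{i=1}^n\boldsymbol{\xi}_i$ with $\boldsymbol{\xi}_i=[I(\boldsymbol{\theta}_0)]^{1/2}\boldsymbol{\zeta}_i$, I obtain $\boldsymbol{W}=\boldsymbol{V}-[I(\boldsymbol{\theta}_0)]^{1/2}\boldsymbol{R}$, which is a coupling of $\boldsymbol{W}$ and $\boldsymbol{V}$ on the same probability space.

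The structural fact needed is that $\mathbb{E}[\boldsymbol{\xi}_1]=\boldsymbol{0}$ and $\mathbb{E}[\boldsymbol{\xi}_1\boldsymbol{\xi}_1^{\intercal}]=I_{p(p+3)/2}$, i.e.\ $\mathrm{Var}(\boldsymbol{\zeta}_1)=[I(\boldsymbol{\theta}_0)]^{-1}$, so that Theorem~\ref{bonisthm} can be applied to $\boldsymbol{V}$ (the fourth-moment hypothesis being automatic, as $\boldsymbol{\zeta}_1$ is a polynomial in a Gaussian vector). The mean-zero property is immediate, and the covariance identity I would establish either by a direct Isserlis-theorem computation of $\mathrm{Var}(\boldsymbol{\zeta}_1)$ together with the standard form of the Fisher information matrix of the multivariate normal, or, more cheaply, by noting that $\mathrm{Var}(\boldsymbol{V})$ does not depend on $n$ while $\boldsymbol{W}-\boldsymbol{V}\to\boldsymbol{0}$ in $L^2$ (from the next step) and $\boldsymbol{W}$ converges in distribution to $\boldsymbol{Z}$ by Theorem~\ref{Theorem_asymptotic_MLE_i.n.i.d}, forcing $\mathrm{Var}(\boldsymbol{V})=I_{p(p+3)/2}$. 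This verification is the main obstacle: unlike the diagonal case of Theorem~\ref{thmdiag}, here $\boldsymbol{\zeta}_1$ has correlated coordinates and $[I(\boldsymbol{\theta}_0)]^{1/2}$ is a full matrix, so the problem cannot be reduced to the one-dimensional setting and the moment bookkeeping is genuinely multivariate.

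With the coupling in hand I would use the triangle inequality for $d_{\mathrm{W}}$, the coupling bound $d_{\mathrm{W}}(\boldsymbol{W},\boldsymbol{V})\le\mathbb{E}|\boldsymbol{W}-\boldsymbol{V}|\le(\mathbb{E}[|\boldsymbol{W}-\boldsymbol{V}|^2])^{1/2}$ (Jensen), and $d_{\mathrm{W}}\le d_{\mathrm{W}_2}$ from (\ref{pqpq}), to write
\begin{equation*}d_{\mathrm{W}}(\boldsymbol{W},\boldsymbol{Z})\le\big(\mathbb{E}\big[|\boldsymbol{W}-\boldsymbol{V}|^2\big]\big)^{1/2}+d_{\mathrm{W}_2}(\boldsymbol{V},\boldsymbol{Z}).\end{equation*}
For the first term, $\boldsymbol{W}-\boldsymbol{V}=-[I(\boldsymbol{\theta}_0)]^{1/2}\boldsymbol{R}$, and since $\bar{\boldsymbol{X}}-\boldsymbol{\mu}\sim\mathrm{MVN}(\boldsymbol{0},\Sigma/n)$, Isserlis' theorem gives $\mathbb{E}[(\bar{X}_k-\mu_k)^2(\bar{X}_l-\mu_l)^2]=(\sigma_{kk}\sigma_{ll}+2\sigma_{kl}^2)/n^2\le 3\sigma_*^4/n^2$, so each entry of $\boldsymbol{R}$ has second moment $O(\sigma_*^4/n)$. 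Bounding the sum over the $O(p^2)$ covariance coordinates by the power-mean (Cauchy--Schwarz) inequality and using $\|[I(\boldsymbol{\theta}_0)]^{1/2}\|_{\mathrm{max}}$ for the matrix entries yields a bound of the form $(\mathrm{const})\,p^{4}\sigma_*^2\|[I(\boldsymbol{\theta}_0)]^{1/2}\|_{\mathrm{max}}/\sqrt{n}$ (the power of $p$ here being crude but convenient), the first term of the claimed bound.

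For the second term I would apply (\ref{bonisbound2}) with $d=p(p+3)/2$, giving $d_{\mathrm{W}_2}(\boldsymbol{V},\boldsymbol{Z})\le 14d^{5/4}n^{-1/2}\max_{1\le j\le d}\sqrt{\mathbb{E}[\xi_{1,j}^4]}$. To estimate $\mathbb{E}[\xi_{1,j}^4]$ I would expand $\xi_{1,j}=\sum_k[I(\boldsymbol{\theta}_0)^{1/2}]_{jk}\zeta_{1,k}$, apply $\big(\sum_{k=1}^d a_k\big)^4\le d^3\sum_k a_k^4$, bound the matrix entries by $\|[I(\boldsymbol{\theta}_0)]^{1/2}\|_{\mathrm{max}}$, and bound the fourth moments of the linear and quadratic coordinates of $\boldsymbol{\zeta}_1$ by constant multiples of $\sigma_*^4$ and $\sigma_*^8$ respectively via Gaussian moment formulas (e.g.\ $\mathbb{E}[(Z^2-1)^4]=60$ for $Z\sim\mathrm{N}(0,1)$). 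This produces $\sqrt{\mathbb{E}[\xi_{1,j}^4]}\le(\mathrm{const})\,d^2\|[I(\boldsymbol{\theta}_0)]^{1/2}\|_{\mathrm{max}}^2\sigma_*^4$, and combining with the $d^{5/4}$ prefactor gives a term of order $d^{13/4}=2^{-13/4}p^{13/4}(p+3)^{13/4}$, matching the second term of the stated bound once the numerical constant $15.1$ is tracked. Summing the two contributions completes the proof, the remaining work being the routine verification of the explicit constants.
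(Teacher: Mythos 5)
Your proposal follows essentially the same route as the paper's proof: the identical decomposition $\boldsymbol{W}=\boldsymbol{V}-[I(\boldsymbol{\theta}_0)]^{1/2}\boldsymbol{R}$, with $\boldsymbol{V}$ a normalised i.i.d$.$ sum of linear and quadratic Gaussian terms and $\boldsymbol{R}$ carrying the entries $\sqrt{n}\,\bar{Y}_k\bar{Y}_\ell$ on the covariance coordinates; the identical verification that $\mathbb{E}[\boldsymbol{\xi}_1\boldsymbol{\xi}_1^{\intercal}]=I_{p(p+3)/2}$ (the paper uses precisely your ``cheaper'' argument: $\mathrm{Var}(\boldsymbol{V})$ is free of $n$, the remainder vanishes asymptotically, and $\boldsymbol{W}$ converges in distribution to $\boldsymbol{Z}$, rather than a direct Isserlis computation); and the identical application of (\ref{bonisbound2}) with $d=p(p+3)/2$, whose fourth-moment bookkeeping ($\mathbb{E}[\xi_{1,j}^4]\le d^4\|[I(\boldsymbol{\theta}_0)]^{1/2}\|_{\mathrm{max}}^4\cdot 105\,\sigma_*^8$ in the paper) is what produces the factor $15.1\,p^{13/4}(p+3)^{13/4}$.

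The one real deviation is your treatment of the remainder, and as written it does not deliver the theorem's first term with coefficient one. Passing to $L^2$ by Jensen, $d_{\mathrm{W}}(\boldsymbol{W},\boldsymbol{V})\le(\mathbb{E}[|\boldsymbol{W}-\boldsymbol{V}|^2])^{1/2}$, and then applying Isserlis and Cauchy--Schwarz gives, even with optimal accounting, a prefactor of at least $\sqrt{3}\,\big(p(p+1)/2\big)^{3/2}$ in front of $\sigma_*^2\|[I(\boldsymbol{\theta}_0)]^{1/2}\|_{\mathrm{max}}n^{-1/2}$; at $p=1$ this is $\sqrt{3}>1=p^4$, so the stated bound fails there. (The loss is intrinsic to the $L^2$ route: the $L^2$ norm of $\sqrt{n}\,\bar{Y}_q\bar{Y}_r$ exceeds its $L^1$ norm by a factor up to $\sqrt{3}$.) The paper avoids this by staying in $L^1$: for $h$ Lipschitz,
\begin{equation*}
|\mathbb{E}[h(\boldsymbol{W})]-\mathbb{E}[h(\boldsymbol{V})]|\le\sum_{1\le k\le j\le p}\;\sum_{1\le r\le q\le p}\|[I(\boldsymbol{\theta}_0)]^{1/2}\|_{\mathrm{max}}\,\sqrt{n}\,\mathbb{E}|\bar{Y}_q\bar{Y}_r|,
\end{equation*}
and $\mathbb{E}|\bar{Y}_q\bar{Y}_r|\le\max_t\mathbb{E}[(\bar{Y}_t)^2]\le\sigma_*^2/n$ by Cauchy--Schwarz, so the count of terms is $(p(p+1)/2)^2\le p^4$ with coefficient one. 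If you replace your Jensen step with this entrywise $L^1$ bound, the rest of your argument goes through as written and recovers the theorem exactly.
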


%\begin{remark}
%\end{remark}

\gdr{\subsection{Implicitly defined MLEs}
\label{sec:implicit_MLE}
In order to be calculated, the general upper bound on the 1-Wasserstein distance of interest, as expressed in Theorem \ref{Theorem_multi}, requires a closed-form expression for the MLE. In this section, we explain how an upper bound on the weaker bounded Wasserstein distance can be obtained when the MLE is implicitly defined. Our strategy is split into two steps; first, put the dependence of the bound on the MLE only through the mean squared error (MSE), $\EE[\sum_{j=1}^{d}Q_j^2]$  with $Q_j$ as in \eqref{cm}, and secondly discuss how upper bounds can be obtained for the MSE. In addition to the regularity conditions needed in Theorem \ref{Theorem_multi}, in order to attain an upper bound on the bounded Wasserstein distance %for $h \in \mathcal{H}_{bw}$ as in \eqref{class_functions} 
when the MLE is not expressed analytically, we
% need to 
replace assumption (R.C.4") by %the stronger 
(Con.1) as below:

\vspace{3mm}

\begin{itemize}[leftmargin=0.53in]
\item [(Con.1)] For $\epsilon > 0$ and for all $\boldsymbol{\theta}_0 \in \Theta$,
\begin{equation}
\label{M_ijk_implicit}
\sup_{\substack{\boldsymbol{\theta}:\left|\theta_q - \theta_{0,q}\right| < \epsilon\\ \forall q \in \left\lbrace 1,2,\ldots, d\right\rbrace}}\left| \frac{\partial{^3}}{\partial \theta_k\partial \theta_j\partial \theta_i}\log f(\boldsymbol{x}_1|\boldsymbol{\theta})\right| \leq M_{kji},
\end{equation}
where $M_{kji} = M_{kji}(\boldsymbol{\theta}_0)$  only depends on $\boldsymbol{\theta}_0$.
\end{itemize}
Theorem \ref{Theorem_multi} provides an upper bound on the 1-Wasserstein distance between the distribution of the MLE and the multivariate normal distribution. In Proposition \ref{prop_implicit_MLE} below, we put the dependence of the 
%general 
upper bound in \eqref{final_bound_regression} 
on the MLE only through the MSE, $\EE[\sum_{j=1}^dQ_j^2]$.
\begin{proposition}
\label{prop_implicit_MLE}
Let $\boldsymbol{X}=(\boldsymbol{X}_1, \boldsymbol{X}_2, \ldots, \boldsymbol{X}_n)$ be i.i.d$.$ $\mathbb{R}^t$-valued, $t \in \mathbb{Z}^+$, random vectors with probability density (or mass) function $f(\boldsymbol{x}_i|\boldsymbol{\theta})$, for which the true parameter value is $\boldsymbol{\theta}_0$ and the parameter space $\Theta$ is an open subset of $\mathbb{R}^d$. Assume that the MLE exists and is unique, but cannot be expressed in a closed-form, and that (R.C.1)--(R.C.3) and (Con.1) are satisfied. In addition, for $\tilde{V}$ as in \eqref{cm}, assume that $\E[|\tilde{V}\nabla\left(\log\left(f(\boldsymbol{X}_1|\boldsymbol{\theta}_0)\right)\right)|^4] < \infty$, where  $\nabla=\big(\frac{\partial}{\partial \theta_1},\ldots,\frac{\partial}{\partial \theta_d}\big)^\intercal$. Then, for $\epsilon > 0$ being a positive constant, as in (Con.1), that need not depend
%not depending 
on the sample size $n$, and with $\boldsymbol{W}$ as in \eqref{cm},
\begin{align}
\label{final_bound_implicit}
\nonumber d_{\mathrm{bW}}\left(\boldsymbol{W}, \boldsymbol{Z}\right) \leq & \frac{1}{\sqrt{n}}K_{1}(\boldsymbol{\theta}_0) + \sqrt{d}\sum_{k=1}^{d}\sum_{l=1}^{d}|\tilde{V}_{lk}|\sqrt{\sum_{i=1}^{d}{\rm Var}\bigg(\frac{\partial^2}{\partial \theta_k\partial \theta_i}\log f(\boldsymbol{X}_1|\boldsymbol{\theta}_0)\bigg)}\sqrt{\EE\bigg[\sum_{j=1}^{d}Q_j^2\bigg]}\\
& + \frac{2}{\epsilon^2}\EE\bigg[\sum_{j=1}^{d}Q_j^2\bigg] + \frac{\sqrt{n}}{2}\sum_{k=1}^{d}\sum_{l=1}^{d}|\tilde{V}_{lk}|\sum_{m=1}^{d}\sum_{i=1}^{d}M_{kmi}\EE\bigg[\sum_{j=1}^{d}Q_j^2\bigg].
\end{align}
where $K_1(\boldsymbol{\theta}_0)$ is as in \eqref{notation_Ki}.
\end{proposition}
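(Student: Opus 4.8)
The plan is to follow the proof of Theorem \ref{Theorem_multi} as closely as possible, deviating only where the absence of a closed form for the MLE forces us into the weaker bounded Wasserstein metric. Writing $\boldsymbol{S}=\frac{1}{\sqrt{n}}\tilde{V}\nabla(\ell(\boldsymbol{\theta}_0;\boldsymbol{X}))$, the triangle inequality gives $d_{\mathrm{bW}}(\boldsymbol{W},\boldsymbol{Z})\leq d_{\mathrm{bW}}(\boldsymbol{S},\boldsymbol{Z})+d_{\mathrm{bW}}(\boldsymbol{W},\boldsymbol{S})$. For the first term, since $\mathcal{H}_{\mathrm{bW}}\subset\mathcal{H}_{\mathrm{W}}$ we have $d_{\mathrm{bW}}\leq d_{\mathrm{W}}$, and so the argument of Theorem \ref{Theorem_multi} (application of Theorem \ref{bonisthm}, using that $\mathrm{Var}(\boldsymbol{S})=I_d$ by (R.C.3)) bounds it by $K_1(\boldsymbol{\theta}_0)/\sqrt{n}$, producing the first term of (\ref{final_bound_implicit}). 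For the second term I would use the integral probability metric representation $d_{\mathrm{bW}}(\boldsymbol{W},\boldsymbol{S})=\sup_{h\in\mathcal{H}_{\mathrm{bW}}}|\mathbb{E}[h(\boldsymbol{W})]-\mathbb{E}[h(\boldsymbol{S})]|$ together with the exact Taylor identity (\ref{Taylor_multi}) for $\boldsymbol{W}$.

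The central device is to introduce the event $B=\{|Q_j|<\epsilon\text{ for all }j\}$, on which the intermediate point $\boldsymbol{\theta}_0^*$ of the expansion, being a coordinatewise convex combination of $\boldsymbol{\theta}_0$ and $\hat{\boldsymbol{\theta}}_n(\boldsymbol{X})$, satisfies $|\theta_{0,m}^*-\theta_{0,m}|<\epsilon$ for every $m$. For each $h\in\mathcal{H}_{\mathrm{bW}}$ I would split $\mathbb{E}|h(\boldsymbol{W})-h(\boldsymbol{S})|\leq\mathbb{E}[|h(\boldsymbol{W})-h(\boldsymbol{S})|\mathbf{1}_B]+\mathbb{E}[|h(\boldsymbol{W})-h(\boldsymbol{S})|\mathbf{1}_{B^c}]$. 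On $B^c$ the boundedness $\|h\|\leq1$ gives $|h(\boldsymbol{W})-h(\boldsymbol{S})|\leq2$, while Markov's inequality yields $\mathbb{P}(B^c)\leq\mathbb{P}(\sum_j Q_j^2\geq\epsilon^2)\leq\epsilon^{-2}\mathbb{E}[\sum_j Q_j^2]$, producing the third term of (\ref{final_bound_implicit}).

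On $B$, the Lipschitz property $\|h\|_{\mathrm{Lip}}\leq1$ gives $|h(\boldsymbol{W})-h(\boldsymbol{S})|\leq|\boldsymbol{W}-\boldsymbol{S}|$, and substituting (\ref{Taylor_multi}) splits $\mathbb{E}[|\boldsymbol{W}-\boldsymbol{S}|\mathbf{1}_B]$ into a second-order ($T_{lj}$) part and a third-order part, exactly as in (\ref{result_term_2}). For the second-order part I would route the Cauchy--Schwarz inequality so as to expose the full mean squared error: from $\mathbb{E}|Q_lT_{lj}|\leq\sqrt{\mathbb{E}[Q_l^2]}\sqrt{\mathbb{E}[T_{lj}^2]}\leq\sqrt{\mathbb{E}[\sum_j Q_j^2]}\sqrt{\mathbb{E}[T_{lj}^2]}$, followed by $\sum_l\sqrt{\mathbb{E}[T_{lj}^2]}\leq\sqrt{d}\sqrt{\sum_l\mathbb{E}[T_{lj}^2]}$ and the identity $\mathbb{E}[T_{lj}^2]=n\,\mathrm{Var}(\frac{\partial^2}{\partial\theta_l\partial\theta_j}\log f(\boldsymbol{X}_1|\boldsymbol{\theta}_0))$ coming from (R.C.3); the factor $\sqrt{n}$ so produced cancels the leading $1/\sqrt{n}$ and, after using the symmetry of the variance in its two indices, yields the second term of (\ref{final_bound_implicit}), with the factor $\sqrt{d}$ accounted for. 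For the third-order part, on $B$ condition (Con.1) supplies the pointwise bound $|\frac{\partial^3}{\partial\theta_q\partial\theta_l\partial\theta_j}\ell(\boldsymbol{\theta}_0^*;\boldsymbol{X})|\leq nM_{qlj}$ (summing the per-observation bound over the $n$ i.i.d.\ terms), after which $\mathbb{E}[|Q_lQ_q|]\leq\tfrac12(\mathbb{E}[Q_l^2]+\mathbb{E}[Q_q^2])\leq\mathbb{E}[\sum_j Q_j^2]$ and the symmetry of $M$ in its indices give the fourth term of (\ref{final_bound_implicit}).

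The main obstacle, and the precise reason the statement is confined to $d_{\mathrm{bW}}$ rather than the stronger $d_{\mathrm{W}}$, is the third-order remainder: without a closed form for the MLE there is no dominating function of the (R.C.4'') type whose relevant moment is finite a priori, and (Con.1) only controls the third log-derivatives uniformly inside a \emph{fixed} neighbourhood of $\boldsymbol{\theta}_0$. The event $B^c$, on which $\hat{\boldsymbol{\theta}}_n(\boldsymbol{X})$ escapes that neighbourhood, therefore cannot be handled by the Lipschitz estimate, and it is only the uniform boundedness $\|h\|\leq1$ of test functions in $\mathcal{H}_{\mathrm{bW}}$ that lets us discard it cheaply through Markov's inequality. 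A secondary point to verify is that $\epsilon$ is a genuine constant independent of $n$, so that the resulting $\epsilon^{-2}\mathbb{E}[\sum_j Q_j^2]$ term is of the same $\mathcal{O}(n^{-1/2})$ order as the remaining terms once the mean squared error $\mathbb{E}[\sum_j Q_j^2]$ is established to be $\mathcal{O}(n^{-1})$.
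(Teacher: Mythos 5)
Your proof is correct and arrives at exactly the bound \eqref{final_bound_implicit}, but it handles the key remainder step differently from the paper. Both proofs share the same skeleton: the triangle inequality through $\boldsymbol{S}=\frac{1}{\sqrt{n}}\tilde{V}\nabla(\ell(\boldsymbol{\theta}_0;\boldsymbol{X}))$, an application of Theorem \ref{bonisthm} for the first term, and the Taylor identity \eqref{Taylor_multi} for $d_{\mathrm{bW}}(\boldsymbol{W},\boldsymbol{S})$. The paper, however, first decomposes $h(\boldsymbol{W})-h(\boldsymbol{S})=D_1+D_2$, where $D_1=h(\boldsymbol{W})-h(\boldsymbol{S}+\boldsymbol{R}_1)$ isolates the $T_{lj}$-part (bounded unconditionally by the Lipschitz property) and $D_2=h(\boldsymbol{S}+\boldsymbol{R}_1)-h(\boldsymbol{S})$ contains the third-order remainder; it then applies the law of total expectation to $D_2$, conditioning on $\{|Q_{(m)}|<\epsilon\}$, and must invoke Lemma 4.1 of \cite{a18} to replace the resulting conditional expectation $\mathbb{E}\big[\sum_j Q_j^2\,\big|\,|Q_{(m)}|<\epsilon\big]$ by the unconditional MSE. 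You instead split once, on the event $B=\{\max_j|Q_j|<\epsilon\}$, for the whole difference: on $B^c$ you use $\|h\|\leq1$ together with Markov's inequality (giving the $\frac{2}{\epsilon^2}\mathbb{E}[\sum_j Q_j^2]$ term), and on $B$ you use the Lipschitz property, \eqref{Taylor_multi}, and (Con.1), after which the indicator is discarded via $\mathbf{1}_B\leq1$ inside expectations of non-negative quantities. This buys two things: the argument is fully self-contained (no appeal to the external conditional-expectation lemma, since $\mathbb{E}[X\mathbf{1}_B]\leq\mathbb{E}[X]$ for $X\geq0$ is trivial), and the Taylor expansion with intermediate point $\boldsymbol{\theta}_0^*$ is only ever invoked on the event where (Con.1) guarantees its third-derivative term is controlled. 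The paper's route, on the other hand, keeps the structure parallel to the earlier arguments of \cite{a18}. Your Cauchy--Schwarz routing for the $T_{lj}$-part (pulling out $\sqrt{\mathbb{E}[\sum_j Q_j^2]}$ first, then applying $\sum_l\sqrt{a_l}\leq\sqrt{d\sum_l a_l}$ to the $T$-factors) is the mirror image of the paper's (which groups the $\sqrt{\mathbb{E}[Q_j^2]}$ factors), and your crude bound $\mathbb{E}|Q_lQ_q|\leq\frac{1}{2}(\mathbb{E}[Q_l^2]+\mathbb{E}[Q_q^2])\leq\mathbb{E}[\sum_j Q_j^2]$ reproduces the paper's slightly more careful symmetrisation of the $M_{kji}$ sum; in both places the final expressions, including constants, coincide.
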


\begin{remark}There is a well-developed theory to verify the bound
\[\sup_n\mathbb{E}[|\sqrt{n}(\hat{\boldsymbol{\theta}}_n(\boldsymbol{X})-\boldsymbol{\theta}_0)|^p]<\infty\]
for any $p>0$ in general settings (see Chapter III, Sections 1 and 3 of \cite{ih81}, and Sections 3--4 of \cite{y11}). Using such results, we can deduce that the bound \eqref{final_bound_implicit} is of the optimal order $\mathcal{O}\left(n^{-1/2}\right)$; notice that the positive constant $\epsilon$ need
%does 
not depend on $n$ and its choice could be optimised in examples. In addition, we note that the bound \eqref{final_bound_implicit} has a better dependence on the dimension $d$ than the 1-Wasserstein distance bound of Theorem \ref{Theorem_multi}.
To be more precise, assuming that $\tilde{V}_{lk}=\mathcal{O}(1)$ and $M_{kmi}=\mathcal{O}(1)$ % it is straightforward to see 
it can be seen that \eqref{final_bound_implicit} is of order $\mathcal{O}(d^5)$, while the 1-Wasserstein distance bound \eqref{final_bound_regression} 
%has a very poor dependence on $d$ due to $K_3(\boldsymbol{\theta}_0)$ as in \eqref{notation_Ki}, which 
is of the much larger order $\mathcal{O}(d^{4}2^d)$.
\end{remark}
\begin{remark}
Condition (Con.1) in \eqref{M_ijk_implicit} is non-restrictive and is satisfied by various distributions for which the MLE of their parameters cannot be expressed analytically. Here, we give two examples:
\begin{enumerate}
\item {\textbf{Gamma distribution:}} With $\alpha, \beta > 0$ and $\boldsymbol{\theta} = (\alpha, \beta)^{\intercal}$ being the vector parameter, the probability density function is $f(x|\boldsymbol{\theta}) = \frac{\beta^\alpha}{\Gamma(\alpha)}x^{\alpha - 1}{\rm e}^{-\beta x}$, $x>0$.
We have that
\begin{align}
\label{derivatives_Gamma}
\nonumber & \frac{\partial^{j+1}}{\partial\alpha^{j+1}}\log f(x|\boldsymbol{\theta}) = -\psi_{j}(\alpha), \forall j \in \mathbb{Z}^{+}, \quad \frac{\partial^3}{\partial\beta^3}\log f(x|\boldsymbol{\theta}) = \frac{2\alpha}{\beta^3}, \\
& \frac{\partial^3}{\partial\alpha^2\partial\beta}\log f(x|\boldsymbol{\theta}) = 0, \quad \frac{\partial^3}{\partial\alpha\partial\beta^2}\log f(x|\boldsymbol{\theta}) = -\frac{1}{\beta^2},
\end{align}
where, for any $z\in\mathbb{C}\setminus\{0,-1,-2,\ldots\}$, the polygamma function $\psi_{m}(z)$ is defined by $\psi_m(z):=\frac{\mathrm{d}^m}{\mathrm{d}z^m}(\psi(z))$, with $\psi(z)=\frac{\mathrm{d}}{\mathrm{d}z}(\log\Gamma(z))$ denoting the digamma function. The polygamma function has the series representation (differentiate both sides of formula 5.15.1 of \cite{olver})
\begin{equation}
\label{psi_j}
\psi_m(z) = (-1)^{m+1}m!\sum_{k=0}^{\infty}\frac{1}{(z+k)^{m+1}},
\end{equation}
which holds for any $m\geq1$ and any $z\in\mathbb{C}\setminus\{0,-1,-2,\ldots\}$. It is easy to see that for $x > 0$, $|\psi_{2}(x)|$ is a decreasing function of $x$ and, using \eqref{derivatives_Gamma}, (Con.1) is satisfied with $M_{112} = 0$ and
\begin{align}
\nonumber & \sup_{\substack{\boldsymbol{\theta}:\left|\theta_q - \theta_{0,q}\right| < \epsilon\\ \forall q \in \left\lbrace 1,2\right\rbrace}}\left| \frac{\partial{^3}}{\partial \theta_1^3}\log f(x_1|\boldsymbol{\theta})\right| \leq |\psi_2(\alpha - \epsilon)| = M_{111},\\
\nonumber & \sup_{\substack{\boldsymbol{\theta}:\left|\theta_q - \theta_{0,q}\right| < \epsilon\\ \forall q \in \left\lbrace 1,2\right\rbrace}}\left| \frac{\partial{^3}}{\partial \theta_2^3}\log f(x_1|\boldsymbol{\theta})\right| \leq \frac{2(\alpha + \epsilon)}{(\beta - \epsilon)^3} = M_{222},\\
\nonumber & \sup_{\substack{\boldsymbol{\theta}:\left|\theta_q - \theta_{0,q}\right| < \epsilon\\ \forall q \in \left\lbrace 1,2\right\rbrace}}\left| \frac{\partial{^3}}{\partial \theta_1\partial\theta_2^2}\log f(x_1|\boldsymbol{\theta})\right| \leq \frac{1}{(\beta-\epsilon)^2} = M_{122}.
\end{align}
\item {\textbf{Beta distribution:}} The probability density function is
\begin{equation}
\nonumber f(x|\boldsymbol{\theta}) = \frac{\Gamma(\alpha+\beta)}{\Gamma(\alpha)\Gamma(\beta)}x^{\alpha-1}(1-x)^{\beta-1},
\end{equation}
with $\alpha, \beta >0$ and $x \in (0,1)$. Hence, for $j,k\in \mathbb{Z^+}$
\begin{align}
\label{multi_Beta_likelihood}
\nonumber & \frac{\partial^{j+1}}{\partial\alpha^{j+1}} \log f(x|\boldsymbol{\theta}) = \psi_{j}(\alpha + \beta) - \psi_{j}(\alpha),\\
\nonumber & \frac{\partial^{j+1}}{\partial\beta^{j+1}} \log f(x|\boldsymbol{\theta}) = \psi_{j}(\alpha + \beta) - \psi_{j}(\beta),\\
& \frac{\partial^{k+j}}{\partial \alpha^k \partial \beta^j}\log f(x|\boldsymbol{\theta}) = \psi_{k+j-1}(\alpha + \beta),
\end{align}
where as in the case of the gamma distribution, 
%$\psi(\cdot)$ is the digamma function and 
$\psi_{j}(\cdot)$ is the polygamma function defined in \eqref{psi_j}. (Con.1) is again satisfied with
\begin{align}
\nonumber & \sup_{\substack{\boldsymbol{\theta}:\left|\theta_q - \theta_{0,q}\right| < \epsilon\\ \forall q \in \left\lbrace 1,2\right\rbrace}}\left| \frac{\partial{^3}}{\partial \theta_1^3}\log f(x_1|\boldsymbol{\theta})\right| \leq |\psi_2(\alpha + \beta - 2\epsilon)| + |\psi_2(\alpha - \epsilon)| = M_{111},\\
\nonumber & \sup_{\substack{\boldsymbol{\theta}:\left|\theta_q - \theta_{0,q}\right| < \epsilon\\ \forall q \in \left\lbrace 1,2\right\rbrace}}\left| \frac{\partial{^3}}{\partial \theta_2^3}\log f(x_1|\boldsymbol{\theta})\right| \leq |\psi_2(\alpha + \beta - 2\epsilon)| + |\psi_2(\beta - \epsilon)| = M_{222},\\
\nonumber & \sup_{\substack{\boldsymbol{\theta}:\left|\theta_q - \theta_{0,q}\right| < \epsilon\\ \forall q \in \left\lbrace 1,2\right\rbrace}}\left| \frac{\partial{^3}}{\partial \theta_1\partial\theta_2^2}\log f(x_1|\boldsymbol{\theta})\right| = \sup_{\substack{\boldsymbol{\theta}:\left|\theta_q - \theta_{0,q}\right| < \epsilon\\ \forall q \in \left\lbrace 1,2\right\rbrace}}\left| \frac{\partial{^3}}{\partial \theta_1^2\partial\theta_2}\log f(x_1|\boldsymbol{\theta})\right|\\
\nonumber & \qquad\qquad\qquad\qquad\qquad\qquad\quad\;\;\leq \left|\psi_2(\alpha + \beta - 2\epsilon)\right| = M_{122} = M_{112}.
\end{align}
\end{enumerate}
\end{remark}

{\raggedright{\textit{Proof of Proposition \ref{prop_implicit_MLE}}.}} 
%The bound in \eqref{final_bound_regression} can be split into terms coming from Stein's method, and terms due to mainly Taylor expansions and probability inequalities. The term coming from Stein's method is $K_1(\boldsymbol{\theta}_0)$ as expressed in \eqref{notation_Ki}. 
With $\tilde{V}$ and $\boldsymbol{W}$ as in \eqref{cm}, we obtain through the  method of proof of Theorem \ref{Theorem_multi}, that
\begin{equation}
\label{middle_step_bw_implicit}
d_{{\mathrm{bW}}}\left(\boldsymbol{W}, \boldsymbol{Z}\right) \leq d_{\mathrm{bW}}\bigg(\frac{1}{\sqrt{n}}\tilde{V}\nabla\left(\ell(\boldsymbol{\theta}_0;\boldsymbol{X})\right), \boldsymbol{Z}\bigg) + d_{\mathrm{bW}}\bigg(\boldsymbol{W}, \frac{1}{\sqrt{n}}\tilde{V}\nabla\left(\ell\left(\boldsymbol{\theta}_0;\boldsymbol{X}\right)\right)\bigg)
\end{equation}
For the first quantity on the right-hand side of the result in \eqref{middle_step_bw_implicit}, we obtain using Theorem \ref{bonisthm} that
\begin{equation}
\label{bound1}
d_{\mathrm{bW}}\bigg(\frac{1}{\sqrt{n}}\tilde{V}\nabla\left(\ell(\boldsymbol{\theta}_0;\boldsymbol{X})\right), \boldsymbol{Z}\bigg) \leq d_{\mathrm{W}_2}\bigg(\frac{1}{\sqrt{n}}\tilde{V}\nabla\left(\ell(\boldsymbol{\theta}_0;\boldsymbol{X})\right), \boldsymbol{Z}\bigg)\leq \frac{1}{\sqrt{n}}K_{1}(\boldsymbol{\theta}_0).
\end{equation}
With respect to the second term in \eqref{middle_step_bw_implicit}, note that
\begin{equation}
d_{\mathrm{bW}}\bigg(\boldsymbol{W}, \frac{1}{\sqrt{n}}\tilde{V}\nabla\left(\ell\left(\boldsymbol{\theta}_0;\boldsymbol{X}\right)\right)\bigg) = \sup_{h\in\mathcal{H}_{\mathrm{bW}}}\left|\mathbb{E}[h(\boldsymbol{W})]-\mathbb{E}\left[h\left(\frac{1}{\sqrt{n}}\tilde{V}\nabla\left(\ell\left(\boldsymbol{\theta}_0;\boldsymbol{X}\right)\right)\right)\right]\right|.
\end{equation}
For $h \in \mathcal{H}_{\mathrm{bW}}$ and with $\tilde{V}$ and $Q_j$ as in \eqref{cm}, for ease of presentation let us denote by
\begin{align}
\label{notationmultiT1T2}
\nonumber & \boldsymbol{R}_1(\boldsymbol{\theta}_0;\boldsymbol{x}) = \frac{1}{2\sqrt{n}}\tilde{V}\sum_{j=1}^{d}\sum_{q=1}^{d}Q_jQ_q\left(\nabla\left(\frac{\partial^2}{\partial\theta_j\partial\theta_q}\ell(\boldsymbol{\theta};\boldsymbol{x})\Big|_{\substack{\boldsymbol{\theta} = \boldsymbol{\theta}_0^{*}}}\right)\right),\\
& D_1 = D_1(\boldsymbol{\theta}_0;\boldsymbol{X},h) := h\left(\boldsymbol{W}\right) - h\bigg(\vphantom{(\left(\sup_{\theta:|\theta-\theta_0|\leq\epsilon}\left|l^{(3)}(\theta;\boldsymbol{X})\right|\right)^2}\frac{1}{\sqrt{n}}\tilde{V}\left(\nabla (\ell(\boldsymbol{\theta}_0;\boldsymbol{x}))\right) + \boldsymbol{R}_1(\boldsymbol{\theta}_0;\boldsymbol{X})\bigg),\\
\nonumber & D_2 = D_2(\boldsymbol{\theta}_0;\boldsymbol{X},h) := h\bigg(\vphantom{(\left(\sup_{\theta:|\theta-\theta_0|\leq\epsilon}\left|l^{(3)}(\theta;\boldsymbol{X})\right|\right)^2}\frac{1}{\sqrt{n}}\tilde{V}\left(\nabla (\ell(\boldsymbol{\theta}_0;\boldsymbol{x}))\right) + \boldsymbol{R}_1(\boldsymbol{\theta}_0;\boldsymbol{x})\bigg) -  h\left(\frac{1}{\sqrt{n}}\tilde{V}\left(\nabla\left(\ell(\boldsymbol{\theta}_0;\boldsymbol{X})\right)\right)\right),
\end{align}
where $\boldsymbol{\theta}_0^*$ is as in \eqref{Taylor1}. Using the above notation and the triangle inequality,
\begin{equation}
\label{T1T2mean}
\left|\mathbb{E}[h(\boldsymbol{W})]-\mathbb{E}\left[h\left(\frac{1}{\sqrt{n}}\tilde{V}\nabla\left(\ell\left(\boldsymbol{\theta}_0;\boldsymbol{X}\right)\right)\right)\right]\right| = \left|\EE\left[D_1 + D_2\right]\right| \leq \EE|D_1| + \EE|D_2|.
\end{equation}
Since $\boldsymbol{W}$ is as in \eqref{cm}, then for $A_{[j]}$ denoting the $j$-th row of a matrix $A$, a first order multivariate Taylor expansion gives that
\begin{align}
\nonumber & \left|D_1\right| \leq \|h\|_{\rm Lip}\left|\vphantom{(\left(\sup_{\theta:|\theta-\theta_0|\leq\epsilon}\left|l^{(3)}(\theta;\boldsymbol{X})\right|\right)^2}\sum_{j=1}^{d}\left(\vphantom{(\left(\sup_{\theta:|\theta-\theta_0|\leq\epsilon}\left|l^{(3)}(\theta;\boldsymbol{X})\right|\right)^2}\sqrt{n}\left[\left[I(\boldsymbol{\theta}_0)\right]^{\frac{1}{2}}\right]_{[j]}(\boldsymbol{\hat{\theta}}_n(\boldsymbol{X}) - \boldsymbol{\theta}_0)-\frac{1}{\sqrt{n}}\tilde{V}_{[j]}\nabla\left(\ell(\boldsymbol{\theta}_0;\boldsymbol{X})\right)\right.\right.\\
\nonumber & \qquad\quad\qquad\quad\left.\left.- \frac{1}{2\sqrt{n}}\tilde{V}_{[j]}\left\lbrace\vphantom{(\left(\sup_{\theta:|\theta-\theta_0|\leq\epsilon}\left|l^{(3)}(\theta;\boldsymbol{X})\right|\right)^2}\sum_{k=1}^{d}\sum_{q=1}^{d}Q_kQ_q\left(\nabla\left(\frac{\partial^2}{\partial\theta_k\partial\theta_q}\ell(\boldsymbol{\theta};\boldsymbol{x})\Big|_{\substack{\boldsymbol{\theta} = \boldsymbol{\theta}_0^{*}}}\right)\right)\vphantom{(\left(\sup_{\theta:|\theta-\theta_0|\leq\epsilon}\left|l^{(3)}(\theta;\boldsymbol{X})\right|\right)^2}\right\rbrace\vphantom{(\left(\sup_{\theta:|\theta-\theta_0|\leq\epsilon}\left|l^{(3)}(\theta;\boldsymbol{X})\right|\right)^2}\right)\vphantom{(\left(\sup_{\theta:|\theta-\theta_0|\leq\epsilon}\left|l^{(3)}(\theta;\boldsymbol{X})\right|\right)^2}\right|.
\end{align}
Using \eqref{Taylor2} component-wise and the Cauchy-Schwarz inequality, we have that, for $T_{kj}$ as in \eqref{cm},
\begin{align}
\label{boundforT1first}
& \EE|D_1| \leq\frac{\|h\|_{\mathrm{Lip}}}{\sqrt{n}} \sum_{k=1}^{d}\sum_{l=1}^{d}|\tilde{V}_{lk}|\sum_{j=1}^{d}\sqrt{\EE[Q_j^2]\EE[T_{kj}^2]}.
\end{align}
Since $\EE[T_{kj}] = 0$, $\forall j, k \in \left\lbrace 1,2,\ldots, d \right\rbrace$, we have that
\begin{align}
\label{first_term_implicit_multiparameter}
\nonumber \EE|D_1| &\leq \|h\|_{\mathrm{Lip}}\sum_{k=1}^{d}\sum_{l=1}^{d}|\tilde{V}_{lk}|\sum_{j=1}^{d}\sqrt{\EE[Q_j^2]}\sqrt{{\rm Var}\bigg(\frac{\partial^2}{\partial \theta_k\partial \theta_j}\log f(\boldsymbol{X}_1|\boldsymbol{\theta}_0)\bigg)}\\
& \leq \|h\|_{\mathrm{Lip}}\sum_{k=1}^{d}\sum_{l=1}^{d}|\tilde{V}_{lk}|\sum_{j=1}^{d}\sqrt{\EE[Q_j^2]}\sqrt{\sum_{i=1}^{d}{\rm Var}\bigg(\frac{\partial^2}{\partial \theta_k\partial \theta_i}\log f(\boldsymbol{X}_1|\boldsymbol{\theta}_0)\bigg)},
\end{align}
where the inequality trivially holds
%comes from the trivial bound $${\rm Var}\left[\frac{\partial^2}{\partial \theta_k\partial \theta_j}\log f(\boldsymbol{X}_1|\boldsymbol{\theta}_0)\right] \leq \sum_{i=1}^{d}{\rm Var}\left[\frac{\partial^2}{\partial \theta_k \partial \theta_i}\log f(\boldsymbol{X}_1|\boldsymbol{\theta}_0)\right]$$ 
since the variance of a random variable is always non-negative. Now, using that  $(\sum_{j=1}^{d}\alpha_j)^2 \leq d(\sum_{j=1}^{d}\alpha_j^2)$ for $\alpha_j \in \mathbb{R}$, yields
\begin{equation}
\nonumber \bigg(\sum_{j=1}^{d}\sqrt{\EE[Q_j^2]}\bigg)^2\leq d\sum_{j=1}^{d}\EE[Q_j^2].
\end{equation} 
Taking square roots in both sides of the above inequality and 
applying this inequality to \eqref{first_term_implicit_multiparameter} yields
\begin{equation}
\label{boundDimplicit1bound}
\EE|D_1| \leq \|h\|_{\mathrm{Lip}}\sqrt{d}\sum_{k=1}^{d}\sum_{l=1}^{d}|\tilde{V}_{lk}|\sqrt{\sum_{i=1}^{d}{\rm Var}\bigg(\frac{\partial^2}{\partial \theta_k\partial \theta_i}\log f(\boldsymbol{X}_1|\boldsymbol{\theta}_0)\bigg)}\sqrt{\EE\bigg[\sum_{j=1}^{d}Q_j^2\bigg]}.
\end{equation}
To bound now $\EE\left|D_2\right|$, with $D_2$ as in \eqref{notationmultiT1T2}, we need to take into account that $\frac{\partial^3}{\partial\theta_k\partial\theta_q\partial\theta_j}\ell(\boldsymbol{\theta};\boldsymbol{x})\Big|_{\substack{\boldsymbol{\theta} = \boldsymbol{\theta}_0^{*}}}$ is in general not uniformly bounded and there is a positive probability that the MLE will be outside an $\epsilon$-neighbourhood of the true value of the parameter. For $\epsilon>0$, the law of total expectation and Markov's inequality yield
\begin{align}
\label{chebyshevmultiT2}
\nonumber \EE\left|D_2\right| &\leq 2\|h\|\PP\left(|Q_{(m)}|\geq\epsilon\right) + \EE\left[|D_2|\,\middle|\,|Q_{(m)}| < \epsilon\right]\\
& \leq \frac{2\|h\|}{\epsilon^2}\EE\bigg[\sum_{j=1}^{d}Q_j^2\bigg] + \EE\left[|D_2|\,\middle|\,|Q_{(m)}| < \epsilon\right],
\end{align}
where for the subscript $(m)$ it holds that
\begin{align}
%\label{(m)}
\nonumber & (m) \in \left\lbrace 1,\ldots,d\right\rbrace\;{\rm is\;such\;that\;} |\hat{\theta}_n(\boldsymbol{x})_{(m)} - \theta_{0,(m)}| \geq |\hat{\theta}_n(\boldsymbol{x})_j - \theta_{0,j}|,\quad \forall j \in \left\lbrace 1,\ldots, d \right\rbrace,
\end{align}
and $Q_{(m)}=Q_{(m)}(\boldsymbol{X},\boldsymbol{\theta}_0) := \hat{\theta}_n(\boldsymbol{X})_{(m)} - \theta_{0,(m)}$. It remains to bound $\EE\left[|D_2|\,\middle|\,|Q_{(m)}| < \epsilon\right]$ by a quantity whose dependence on the MLE is merely through the MSE. A first-order Taylor expansion  and \eqref{Taylor_multi} yield
\begin{align}
\label{T2multijustabound}
\left|D_2\right| &\leq \frac{\|h\|_{\rm Lip}}{2\sqrt{n}}\sum_{k=1}^{d}\sum_{l=1}^{d}|\tilde{V}_{lk}|\sum_{j=1}^{d}\sum_{v=1}^{d}\left|Q_jQ_v\frac{\partial^3}{\partial\theta_{k}\partial\theta_{j}\partial\theta_{v}}\ell(\boldsymbol{\theta};\boldsymbol{X})\Big|_{\substack{\boldsymbol{\theta} = \boldsymbol{\theta}_0^{*}}}\right|.
\end{align}
Therefore, from \eqref{chebyshevmultiT2} and \eqref{T2multijustabound} we have that
\begin{align*}
\nonumber \EE|D_2| &\leq \frac{2\|h\|}{\epsilon^2}\EE\bigg[\sum_{j=1}^{d}Q_j^2\bigg] \\
&\quad+\frac{\|h\|_{\rm Lip}}{2\sqrt{n}}\sum_{k=1}^{d}\sum_{l=1}^{d}|\tilde{V}_{lk}|\EE\bigg[\vphantom{(\left(\sup_{\theta:|\theta-\theta_0|\leq\epsilon}\left|l^{(3)}(\theta;\boldsymbol{X})\right|\right)^2}\sum_{j=1}^{d}\sum_{v=1}^{d}\bigg|\vphantom{(\left(\sup_{\theta:|\theta-\theta_0|\leq\epsilon}\left|l^{(3)}(\theta;\boldsymbol{X})\right|\right)^2}Q_jQ_v\frac{\partial^3}{\partial\theta_{k}\partial\theta_{j}\partial\theta_{v}}\ell(\boldsymbol{\theta};\boldsymbol{X})\Big|_{\substack{\boldsymbol{\theta} = \boldsymbol{\theta}_0^{*}}}\vphantom{(\left(\sup_{\theta:|\theta-\theta_0|\leq\epsilon}\left|l^{(3)}(\theta;\boldsymbol{X})\right|\right)^2}\bigg|\,\bigg|\,\left|Q_{(m)}\right| < \epsilon\vphantom{(\left(\sup_{\theta:|\theta-\theta_0|\leq\epsilon}\left|l^{(3)}(\theta;\boldsymbol{X})\right|\right)^2}\bigg],
\end{align*}
and using (Con.1), we have that
\begin{equation}
\nonumber \EE|D_2| \leq \frac{2\|h\|}{\epsilon^2}\EE\bigg[\sum_{j=1}^{d}Q_j^2\bigg] + \frac{\sqrt{n}\|h\|_{\mathrm{Lip}}}{2}\sum_{k=1}^{d}\sum_{l=1}^{d}|\tilde{V}_{lk}|\EE\bigg[\sum_{j=1}^{d}\sum_{l=1}^{d}\left|Q_jQ_i\right|M_{kji}\,\bigg|\,|Q_{(m)}| < \epsilon\bigg]
\end{equation}
Simple calculations lead to
\begin{align}
\nonumber &\sum_{j=1}^{d}\sum_{i=1}^{d}\left|Q_jQ_i\right|M_{kji} = \sum_{j=1}^{d}Q_j^2M_{kjj} + 2\sum_{i=1}^{d-1}\sum_{j=i+1}^{d}\left|Q_j\right|\left|Q_i\right|M_{kij}
\end{align}
and using that $2\alpha\beta \leq \alpha^2 + \beta^2$,  $\forall \alpha, \beta \in \mathbb{R}$,
\begin{align}
\label{mid_step_boundDimplicit2}
\nonumber \sum_{j=1}^{d}\sum_{i=1}^{d}\left|Q_jQ_i\right|M_{kji} & \leq \sum_{j=1}^{d}Q_j^2M_{kjj} + \sum_{i=1}^{d-1}\sum_{j=i+1}^{d}\left[Q_j^2 + Q_i^2\right]M_{kji} = \sum_{j=1}^{d}Q_j^2\sum_{i=1}^{d}M_{kji}\\
& \leq \sum_{j=1}^{d}Q_j^2\sum_{m=1}^{d}\sum_{i=1}^{d}M_{kmi}.
\end{align}
Using \eqref{mid_step_boundDimplicit2} and Lemma 4.1 from \cite{a18}, yields
\begin{equation}
\label{boundDimplicit2bound}
E|D_2| \leq \frac{2\|h\|}{\epsilon^2}\EE\bigg[\sum_{j=1}^{d}Q_j^2\bigg] + \frac{\sqrt{n}\|h\|_{\mathrm{Lip}}}{2}\sum_{k=1}^{d}\sum_{l=1}^{d}|\tilde{V}_{lk}|\sum_{m=1}^{d}\sum_{i=1}^{d}M_{kmi}\EE\bigg[\sum_{j=1}^{d}Q_j^2\bigg].
\end{equation}
Hence, from \eqref{middle_step_bw_implicit}, \eqref{bound1}, \eqref{T1T2mean}, \eqref{boundDimplicit1bound} and \eqref{boundDimplicit2bound} and using that $\|h\|\leq1$ and $\|h\|_{\mathrm{Lip}}\leq1$ for $h\in\mathcal{H}_{\mathrm{bW}}$, we obtain the upper bound \eqref{final_bound_implicit}, which depends on $\boldsymbol{\hat{\theta}}_n(\boldsymbol{X})$ only through the MSE, $\EE[\sum_{j=1}^{d}Q_j^2]. \hfill \square$
}
\subsection{Empirical results}\label{sec4.56}
In this section, we investigate, through a simulation study, the accuracy of our bounds given in Sections \ref{sec:one_parameter} -- \ref{sec4.3nor}.   We carried out the study using \textsf{R}.  For the exponential distribution with $\theta=1$ under canonical and non-canonical parametrisation (this bound is given in Appendix \ref{expapp}) and the normal distribution under canonical parametrisation with $\boldsymbol{\eta}=(1,1)^\intercal$, we calculated our bound and estimated the true value of $d_{\mathrm{W}}(\boldsymbol{W},\boldsymbol{Z})$ for sample sizes $n=10^j$, $j=1,2,3,4$ (Tables \ref{tableresults_exp_can} -- \ref{tableresults_linear_d3}).  For the multivariate normal distribution under non-canonical \gr{parametrisation} with diagonal covariance matrix we studied the dependence of $d_{\mathrm{W}}(\boldsymbol{W},\boldsymbol{Z})$ on the dimension $p$ with $n=1000$ fixed and $\mu_k=\sigma_k^2=1$ for all $1\leq k\leq p$ (Figure \ref{figure_thm414}). 

Calculating our bounds is straightforward, but estimating the \gr{1-}Wasserstein distance $d_{\mathrm{W}}(\boldsymbol{W},\boldsymbol{Z})$ is more involved.  For a given example and given sample size $n$, we simulated $N$ realisations of the distributions of $\boldsymbol{W}$ and $\boldsymbol{Z}$ to obtain the empirical distribution functions of both distributions.  We then used the \textsf{R} package \textbf{\textit{transport}} to compute the \gr{1-}Wasserstein distance between these two empirical distributions.  As we simulated the distributions, we only obtained an estimate for the \gr{1-}Wasserstein distance $d_{\mathrm{W}}(\boldsymbol{W},\boldsymbol{Z})$, although this estimate improves as $N$ increases.  To mitigate the random effects from the simulations, we repeated this $K=100$ times and then took the sample mean to obtain our estimate $\hat{d}_{\mathrm{W}}(\boldsymbol{W},\boldsymbol{Z})$.  We used $N=10^4$ for all simulations, except for the multivariate normal distribution under non-canonical parametrisation for which we used $N=10^3$ on account of the many simulations for the 99 values of the dimension $p$.    

\begin{table}[H]
\centering
\caption{Simulation results for the $\mathrm{Exp}(1)$ distribution under canonical parametrisation}
%\vspace{0.04in}
\begin{tabular}{r|r|r|r}
	  $n$ & $\hat{d}_{\mathrm{W}}(W,Z)$ & Bound & Error\\
	  \hline
	  \hline
  10 & 0.351 & 2.303  & 1.952\\
  \hline
  100 & 0.100 & 0.649  & 0.548\\
  \hline
  1000 & 0.034 & 0.203 & 0.169\\
  \hline
  10,000 & 0.020 & 0.064  & 0.044\\
  %\hline
  %100,000 & 0.0185 & 0.0203  & 0.0018\\
  \end{tabular}
\label{tableresults_exp_can}
\end{table}

\vspace{-3mm}

\begin{table}[H]
\centering
\caption{Simulation results for the $\mathrm{Exp}(1)$ distribution under non-canonical parametrisation}
%\vspace{0.04in}
\begin{tabular}{r|r|r|r|r}
	  $n$ & $\hat{d}_{\mathrm{W}}(W,Z)$ & Bound & Error & Bound using Theorem \ref{thmapw}\\
	  \hline
	  \hline
  10 & 0.103 & 7.499  & 7.396 & 0.321\\
  \hline
  100 & 0.036 & 1.498  & 1.463 & 0.101\\
  \hline
  1000 & 0.021 & 0.458 & 0.437 & 0.032\\
  \hline
  10,000 & 0.017 & 0.144  & 0.127 & 0.010\\
  %\hline  
  %100,000 & 0.0167 & 0.0456  & 0.0289\\
\end{tabular}
\label{tableresults_exp_noncan}
\end{table}
%\begin{table}[H]
%\caption{Simulation results for the simple linear regression example with $d=2$}
%\vspace{0.04in}
%\centering
%\begin{tabular}{r|r|r|r}
%	  $n$ & $d_{\mathrm{W}}(\boldsymbol{W},\boldsymbol{Z})$ & Bound & Error\\
%	  \hline
%	  \hline
%  10 & 0.0017 & 18.4932  & 18.4915\\
%  \hline
%  100 & 0.0016 & 6.0569  & 6.0553\\
%  \hline
%  1000 & 0.0015 & 1.9200 & 1.9185\\
%  \hline
%  10,000 & 0.0006 & 0.6075  & 0.6069\\
%  \hline
%  100,000 & 0.0003 & 0.1920  & 0.1917\\
%\end{tabular}
%\label{tableresults_linear_d2}
%\end{table}
%\begin{table}[H]
%\caption{Simulation results for the simple linear regression example with $d=3$}
%\vspace{0.04in}
%\centering
%\begin{tabular}{r|r|r|r}
%	  $n$ & $d_{\mathrm{W}}(\boldsymbol{W},\boldsymbol{Z})$ & Bound & Error\\
%	  \hline
%	  \hline
%  10 & 0.3330 & 36.5758  & 36.2428\\
%  \hline
%  100 & 0.3328 & 12.0462  & 11.7134\\
%  \hline
%  1000 & 0.3326 & 3.7767 & 3.4441\\
%  \hline
%  10,000 & 0.3323 & 1.1996  & 0.8673\\
%\end{tabular}
%\label{tableresults_linear_d3}
%\end{table}
\vspace{-3mm}

\begin{table}[H]
\caption{Simulation results for the $\mathrm{N}(1,1)$ distribution under canonical parametrisation}
\centering
%\vspace{0.04in}
\begin{tabular}{r|r|r|r}
	  $n$ & $\hat{d}_{\mathrm{W}}(\boldsymbol{W},\boldsymbol{Z})$ & Bound & Error\\
	  \hline
	  \hline
  10 & 1.032 & 8962.830  & 8961.798\\
  \hline
  100 & 0.224 & 2834.296  &  2834.072\\
  \hline
  1000 & 0.083 & 896.283 & 896.200\\
  \hline
  10,000 & 0.057 & 283.430 & 283.373\\
  %\hline
  %100,000 & 0.0541 & 89.6283  & 89.5742\\
\end{tabular}
\label{tableresults_linear_d3}
\end{table}
%In Table \ref{tableresults_thm_414} that follows, we give simulation-based results for Theorem \ref{thmdiag} when the dimensionality of the parameter is equal to 6, meaning that $\boldsymbol{\theta} = \left(\mu_1, \mu_2, \mu_3, \sigma_1^2, \sigma_2^2, \sigma_3^2\right)$. 

\begin{comment}
\begin{table}[H]
\caption{Simulation results for the multivariate normal distribution under non-canonical parametrisation with diagonal covariance matrix. The dimensionality of the random vectors $\boldsymbol{X_i}, i=1,2,\ldots,n$ is taken to be equal to 3.}
\vspace{0.04in}
\centering
\begin{tabular}{r|r|r|r}
	  $n$ & $d_{\mathrm{W}}(\boldsymbol{W},\boldsymbol{Z})$ & Bound & Error\\
	  \hline
	  \hline
  10 & 0.8682 & 73.9973  & 73.1291\\
  \hline
  100 & 0.7376 & 23.4  & 22.6624\\
  \hline
  1000 & 0.7234 & 7.3997 & 6.6763\\
  \hline
  10,000 & 0.7219 & 2.34  & 1.6181\\
  \hline
  100,000 & 0.7218 & 0.7400 & 0.0182\\
\end{tabular}
\label{tableresults_thm_414}
\end{table}
\end{comment}
From the tables we see that at each step we increase the sample size by a factor of ten, the value of the upper bound drops by approximately a factor of $\sqrt{10}$, which is expected as our bounds are of order $\mathcal{O}\left(n^{-1/2}\right)$.  The simulated \gr{1-}Wasserstein distances $\hat{d}_{\mathrm{W}}(\boldsymbol{W},\boldsymbol{Z})$ do not decrease by a factor of roughly $\sqrt{10}$ for larger sample sizes, because the approximation errors resulting from taking a finite value of $N$ become more noticeable when the value of $\hat{d}_{\mathrm{W}}(\boldsymbol{W},\boldsymbol{Z})$ decreases.

Our bounds for the exponential distribution perform reasonably well, particularly in the canonical parametrisation case.  In Table \ref{tableresults_exp_noncan} for the exponential distribution under non-canonical parametrisation we also provide the bound obtained from a direct application of Theorem \ref{thmapw} (this is inequality (\ref{boundf})), which as expected is an order of magnitude better than our bound resulting from the general approach.  The bounds for the normal distribution under canonical parametrisation are much bigger than for the exponential distribution.  This is a result of the increased complexity of this example and the fact that we sacrificed best possible constants in favour of a simpler proof and compact final bound.

%A comparison between our bounds and those of \cite{ar15} and \cite{a18} is possible.  \cite{ar15} gave bounds for the normal approximation of the standardised MLE for the $\mathrm{Exp}(1)$ distribution under canonical and non-canonical parametrisation. quantity

%  \cite{ar15} gave bounds for bounded Wasserstein distance $d_{\mathrm{bW}}(W,Z)$, whilst \cite{a18} gave bounds on the quantity $|\mathbb{E}$ the distance $d_{0,1,2,3$

Figure \ref{figure_thm414} shows the behaviour of the simulated \gr{1-}Wasserstein distance $\hat{d}_{\mathrm{W}}(\boldsymbol{W}, \boldsymbol{Z})$ for the multivariate normal distribution with diagonal covariance matrix with $\mu_k=\sigma_k^2=1$, $1\leq k\leq p$, when the dimension $p$ varies from 2 up to 100.  Here our focus was on the dependence on the dimension for fixed $n$, so we chose a small sample size $n=1000$ to reduce the computational complexity of the simulations.  Figure \ref{figure_thm414} also contains a log-log plot.  Across all 99 data points there is clearly not a straight line fit, but after the value 3.8 for $\log(p)$ (the 45th data point), we start to see some stabilisation towards a straight line.  We obtained a slope of 0.576 between the 70th and 99th data points, which reduced to 0.569 between the 90th and 99th data points. \gr{The results from these simulations suggest that the slope is converging down to 0.5, which would be consistent with the theoretical $\mathcal{O}(p^{1/2})$ scaling of our bound (\ref{bound78}).}

% Whilst more simulations would be required to provide compelling evidence, this suggests that for large $p$ the \gr{1-}Wasserstein distance in this example may scale at order $\mathcal{O}(p^{1/2})$.  

\begin{figure}[H]
\centering
\includegraphics[width=1\textwidth, height=0.3\textheight]{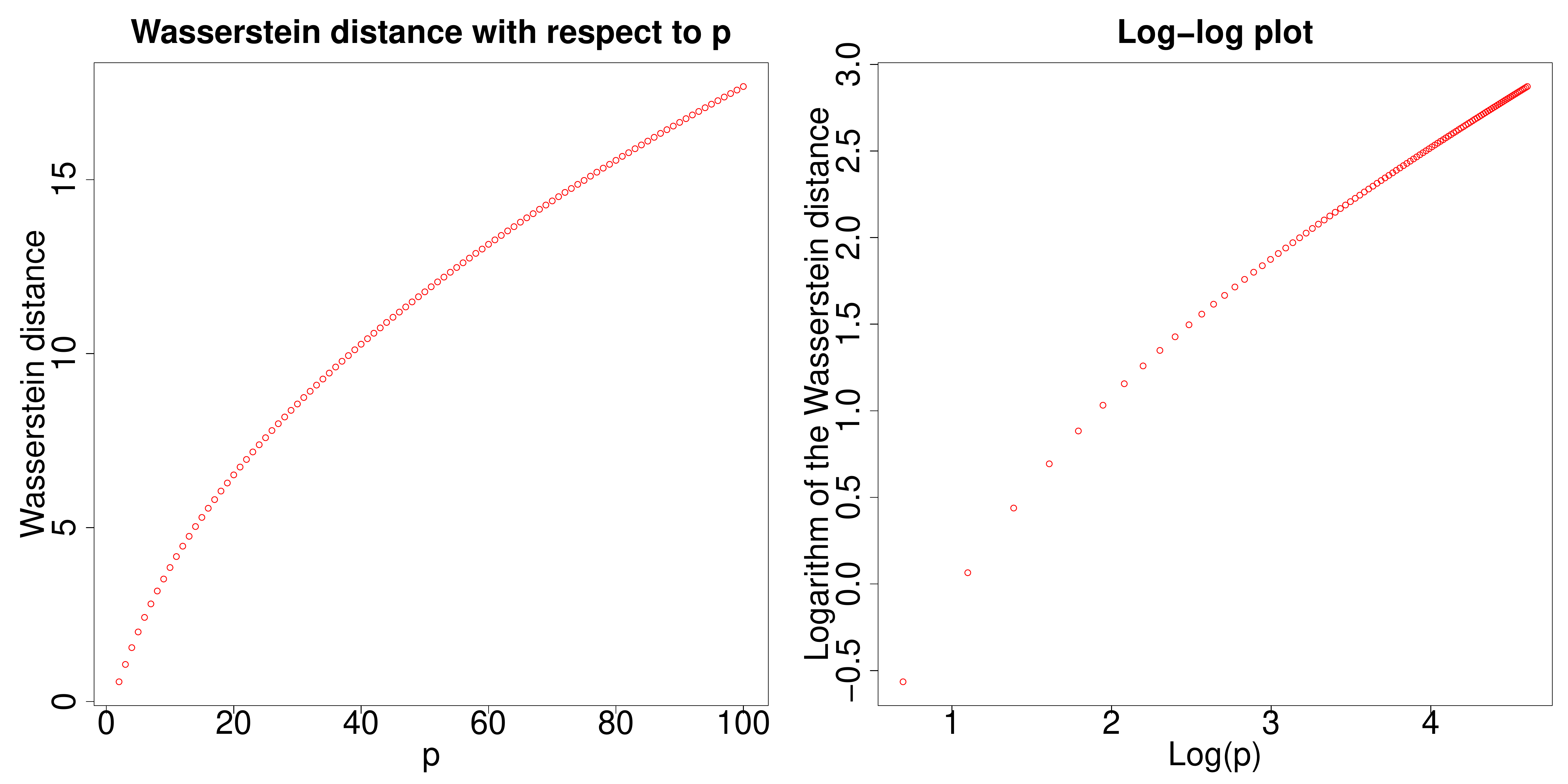}
\caption{Simulated values of $d_{\mathrm{W}}(\boldsymbol{W}, \boldsymbol{Z})$ in the setting of Theorem \ref{thmdiag} when the dimension $p$ varies in the set $\left\lbrace 2, 3, 4, \ldots, 100\right\rbrace$.} 
\label{figure_thm414}
\end{figure}

\appendix

\section{Further examples, proofs and calculations}\label{appa}

\subsection{Verifying (R.C.4'') for the inverse gamma distribution}\label{appig}

Let $X_1, X_2, \ldots, X_n$ be i.i.d$.$ inverse gamma random variables with parameters $\alpha>0$ and $\beta>0$ and probability density function
\begin{equation*}f(x|\alpha,\beta)=\frac{\beta^\alpha}{\Gamma(\alpha)}x^{-\alpha-1}\exp\Big\{-\frac{\beta}{x}\Big\}, \quad x>0.
\end{equation*}
In this appendix, we verify condition (R.C.4'') for the single-parameter MLE for the inverse gamma distribution (fixed $\alpha$ or fixed $\beta$).  The purpose is to give an illustration of how (R.C.4'') can be verified for more complicated MLEs than those considered in Section \ref{sec4}.  To keep the calculations manageable, we focus on the single-parameter case. 

For the moment, let $\theta$ denote the unknown parameter, either $\alpha$ or $\beta$. Recall that in the single-parameter case condition (R.C.4'') is
\begin{equation*}
\mathrm{max}_{\tilde{\theta}\in \left\lbrace\hat{\theta}_n(\boldsymbol{X}), \theta_{0}\right\rbrace}\E\big|(\hat{\theta}_n(\boldsymbol{X}) - \theta_{0})^2M(\tilde{\boldsymbol{\theta}}; \boldsymbol{X})\big| < \infty.
\end{equation*}
We shall verify the stronger (and, in this case, simpler to verify) condition that
\begin{equation*}
\mathbb{E}\big[(\hat{\theta}_n(\boldsymbol{X}) - \theta_{0})^4\big]\mathrm{max}_{\tilde{\theta}\in \left\lbrace\hat{\theta}_n(\boldsymbol{X}), \theta_{0}\right\rbrace}\E\big[\big(M(\tilde{\boldsymbol{\theta}}; \boldsymbol{X})\big)^2\big] < \infty,
\end{equation*}
which implies (R.C.4'') by the Cauchy-Schwarz inequality.  It should be noted that provided $\mathbb{E}\big[(\hat{\theta}_n(\boldsymbol{X}) - \theta_{0})^4\big]<\infty$, the argument of part (1) of Remark \ref{rem12} shows that this quantity is order $O(n^{-2})$.  In verifying that the expectations involving the monotonic dominating function $M$ are finite, we shall see, as expected, that these expectations are order $O(n^2)$. Therefore the final term in bound (\ref{boundTHEOREM}) of Theorem \ref{Theoremnoncan} is of the desired order $O(1)$.  An application of Theorem \ref{Theoremnoncan}, and further calculations to bound the other (simpler) terms would confirm that we obtain a Wasserstein distance bound with $O(n^{-1/2})$ convergence rate.

\vspace{2mm}

\noindent{\emph{1. Unknown $\beta$, fixed $\alpha=\alpha_0$.}} The log-likelihood function is
\begin{equation*}\ell(\beta;\boldsymbol{x})=n\alpha_0\log\beta+n\log\Gamma(\alpha_0)-(\alpha_0+1)\sum_{i=1}^n \log x_i-\beta\sum_{i=1}^nx_i^{-1},
\end{equation*}
from which we readily obtain the unique MLE $\hat{\beta}=\frac{n\alpha_0}{\sum_{i=1}^nX_i^{-1}}$.  Note that $\hat{\beta}\stackrel{d}{=} G^{-1}$, where $G\sim\mathrm{G}(n\alpha_0,n\alpha_0\beta_0)$, which can be seen from standard properties of the gamma distribution and the relation that if $X\sim\mathrm{Inv.G}(\alpha,\beta)$, then $X^{-1}\sim\mathrm{G}(\alpha, \beta)$.  Therefore
\begin{align*}\mathbb{E}[(\hat{\beta}-\beta_0)^4]\leq 8(\mathbb{E}[\hat{\beta}^4]+\beta_0^4)=8(\mathbb{E}[G^{-4}]+\beta_0^4)<\infty, \quad \text{for $\alpha_0>4n^{-1}$}.
\end{align*} 
We have that $\ell^{(3)}(\beta;\boldsymbol{x})=\frac{2n\alpha_0}{\beta^3}$, and so we may take $M(\beta;\boldsymbol{x})=\frac{2n\alpha_0}{\beta^3}$. We have 
\begin{align*}\mathbb{E}[(M(\beta_0;\boldsymbol{X}))^2]=\frac{4n^2\alpha_0^2}{\beta_0^6}<\infty, \quad
\mathbb{E}\big[(M(\hat{\beta};\boldsymbol{X}))^2\big]=4n^2\alpha_0^2\mathbb{E}[G^6]<\infty,
\end{align*}
and, moreover, $\mathbb{E}[(M(\hat{\beta};\boldsymbol{X}))^2]=O(n^2)$, since $\E[G^6]=O(1)$.

\vspace{2mm}

\noindent{\emph{2. Unknown $\alpha$, fixed $\beta=\beta_0$.}} The log-likelihood function is
\begin{equation*}\ell(\alpha;\boldsymbol{x})=n\alpha\log\beta_0+n\log\Gamma(\alpha)-(\alpha+1)\sum_{i=1}^n \log x_i-\beta_0\sum_{i=1}^nx_i^{-1},
\end{equation*}
and differentiating gives
\begin{equation*}\ell'(\alpha;\boldsymbol{x})=n\log \beta_0+n\psi(\alpha)-\sum_{i=1}^n\log x_i,
\end{equation*}
where $\psi(x)=\frac{\mathrm{d}}{\mathrm{d}x}(\log \Gamma(x))$ is the digamma function. The unique MLE is thus given by
\begin{equation*}\hat{\alpha}=\psi^{-1}\bigg(\frac{1}{n}\sum_{i=1}^n\log\Big(\frac{X_i}{\beta_0}\Big)\bigg),
\end{equation*}
where $\psi^{-1}(x)$ is the inverse digamma function.  In verifying (R.C.4''), we shall make use of the following inequality of \cite{b18}:
\begin{equation}\label{b18bound}\frac{1}{\log(1+\mathrm{e}^{-x})}<\psi^{-1}(x)<\mathrm{e}^x+\frac{1}{2}, \quad x\in\mathbb{R}.
\end{equation}
%where the lower bound $(\log2+|x|)^{-1}$ was not given in \cite{b18}, but is easily verified, and will be useful for our purposes.

Let us first show that $\mathbb{E}[(\hat{\alpha}-\alpha_0)^4]<\infty$.  We have that $\mathbb{E}[(\hat{\alpha}-\alpha_0)^4]\leq 8(\mathbb{E}[\hat{\alpha}^4]+\alpha_0^4)$, so it suffices to prove that $\mathbb{E}[\hat{\alpha}^4]<\infty$. By the upper bound in (\ref{b18bound}),
\begin{align*}\mathbb{E}[\hat{\alpha}^4]&\leq\mathbb{E}\bigg[\bigg(\exp\bigg\{\frac{1}{n}\sum_{i=1}^n\log\Big(\frac{X_i}{\beta_0}\Big)\bigg\}+\frac{1}{2}\bigg)^4\bigg]\leq 8\bigg(\mathbb{E}\bigg[\exp\bigg\{\frac{4}{n}\sum_{i=1}^n\log\Big(\frac{X_i}{\beta_0}\Big)\bigg\}\bigg]+\frac{1}{16}\bigg)\\
&=\frac{1}{2}+\mathbb{E}\bigg[\prod_{i=1}^n\Big(\frac{X_i}{\beta_0}\Big)^{4/n}\bigg]=\frac{1}{2}+\frac{1}{\beta_0^n}\big(\E[X_1^{4/n}]\big)^n <\infty, \quad \text{for $\alpha_0>4n^{-1}$,}
\end{align*}
where we used that $X_1,\ldots,X_n$ are i.i.d$.$ in the final equality, and in the final step we used that, for $X\sim \mathrm{Inv.G}(\alpha,\beta)$, $\mathbb{E}[X^\gamma]<\infty$ for $\alpha>\gamma$.

  We have that $\ell^{(3)}(\alpha;\boldsymbol{x})=-n\psi_2(\alpha)$, where $\psi_2(x)=\frac{\mathrm{d}^2}{\mathrm{d}x^2}(\psi(x))$ is a polygamma function. From the infinite series representation $\psi_2(x)=-2\sum_{k=0}^\infty(k+x)^{-3}$, $x>0$ (differentiate both sides of formula 5.15.1 of \cite{olver}), it follows that $-\psi_2(x)$ a positive, monotone strictly decreasing function of $x$ on $(0,\infty)$.  We may therefore take $M(\alpha;\boldsymbol{x})=-n\psi_2(\alpha)$. As in the case of unknown $\beta$ and fixed $\alpha=\alpha_0$, it is immediate that $\mathbb{E}[(M(\alpha_0;\boldsymbol{X}))^2]<\infty$, and that this quantity is $O(n^2)$. We now focus on the more involved task of showing that $\mathbb{E}[(M(\hat{\alpha};\boldsymbol{X}))^2]<\infty$. We begin by noting the elementary inequality $-\psi_2(x)\leq2x^{-3}+x^{-2}$, $x>0$ \cite{gq10}. On using this inequality we obtain 
\begin{align*}\mathbb{E}[(M(\hat{\alpha};\boldsymbol{X}))^2]=n^2\mathbb{E}[(\psi_2(\hat{\alpha}))^2]\leq n^2\mathbb{E}\bigg[\bigg(\frac{2}{\hat{\alpha}^3}+\frac{1}{\hat{\alpha}^2}\bigg)^2\bigg]\leq 2n^2\big(4\mathbb{E}[\hat{\alpha}^{-6}]+\mathbb{E}[\hat{\alpha}^{-4}]\big).
\end{align*}
Using the lower bound of (\ref{b18bound}) followed by the elementary inequality $\log(1+\mathrm{e}^{-x})\leq \log 2+|x|$, $x\in\mathbb{R}$, we obtain that, for $m=4,6$,
\begin{align}\label{asdcv}\mathbb{E}[\hat{\alpha}^{-m}]\leq \mathbb{E}\bigg[\bigg(\log2+\frac{1}{n}\bigg|\sum_{i=1}^n\log\Big(\frac{X_i}{\beta_0}\Big)\bigg|\bigg)^m\bigg],
\end{align}
which is finite because $\E[|\log(X_i)|^k]<\infty$, for $k=1,\ldots,6$. Moreover, it is readily seen from (\ref{asdcv}) that $\mathbb{E}[\hat{\alpha}^{-4}]=O(1)$ and $\mathbb{E}[\hat{\alpha}^{-6}]=O(1)$. We therefore conclude that $\mathbb{E}[(M(\hat{\alpha};\boldsymbol{X}))^2]<\infty$, and that this quantity is of the expected order $O(n^{2})$.

\subsection{Exponential distribution: the non-canonical case}\label{expapp}

Let $X_1, X_2, \ldots, X_n$ be i.i.d$.$ random variables from the $\mathrm{Exp}\left(\frac{1}{\theta}\right)$ distribution with probability density function
\begin{align*}
 f(x|\theta) &= \frac{1}{\theta}{\rm exp}\left\lbrace-\frac{1}{\theta}x\right\rbrace\mathbf{1}_{\{x>0\}} = {\rm exp}\left\lbrace-{\rm log}\theta - \frac{1}{\theta}x\right\rbrace\mathbf{1}_{\{x >0\}}\\
&= {\rm exp}\left\lbrace k(\theta)T(x) - A(\theta) + S(x)\right\rbrace\mathbf{1}_{\{x \in B\}},
\end{align*}
where $B = (0,\infty)$, $\theta \in \Theta = (0,\infty)$, $T(x) = -x$, $k(\theta) = \frac{1}{\theta}$, $A(\theta) = {\rm log}\theta$ and $S(x) = 0$. Thus, Exp$\left(\frac{1}{\theta}\right)$ is a non-canonical exponential family distribution. The MLE is unique and equal to $\hat{\theta}_n(\boldsymbol{X}) = \bar{X}$.
\begin{corollary}
\label{Corollarynoncanexp}
Let $X_1, X_2, \cdots, X_n$ be i.i.d$.$ random variables that follow the $\mathrm{Exp}(\frac{1}{\theta_0})$ distribution.  Let $W=\sqrt{n\:i(\theta_0)}(\hat{\theta}_n(\boldsymbol{x})- \theta_0)$ and $Z\sim\mathrm{N}(0,1)$. Then, for $n>3$,
\begin{align}
\label{boundnoncanexponential3}
 d_{\mathrm{W}}(W,Z) < \frac{10.41456}{\sqrt{n}} + \frac{4n^{3/2}(n+6)}{(n-1)(n-2)(n-3)}+\frac{6}{n^{3/2}}.
\end{align}
\end{corollary}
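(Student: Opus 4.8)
The plan is to obtain the bound as a direct specialisation of part \textbf{(1)} of Corollary \ref{Theoremnoncanexp}, i.e.\ of inequality (\ref{noncanexponential}), to the exponential family data $T(x)=-x$, $k(\theta)=1/\theta$, $A(\theta)=\log\theta$, $S(x)=0$ identified above, for which the unique MLE is $\hat{\theta}_n(\boldsymbol{X})=\bar{X}$. First I would record the elementary ingredients. Since $X_1\sim\mathrm{Exp}(1/\theta_0)$ has mean $\theta_0$ and variance $\theta_0^2$, we have $\mathrm{Var}(T(X_1))=\theta_0^2$, and hence $i(\theta_0)=[k'(\theta_0)]^2\mathrm{Var}(T(X_1))=\theta_0^{-4}\cdot\theta_0^2=\theta_0^{-2}$. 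With $D(\theta)=A'(\theta)/k'(\theta)=-\theta$ we get $T(X_1)-D(\theta_0)=\theta_0-X_1$, so the third absolute moment term is $\EE[|X_1-\theta_0|^3]/[\mathrm{Var}(T(X_1))]^{3/2}=\EE[|X_1-\theta_0|^3]/\theta_0^3$; a short integral computation gives $\EE[|X_1-\theta_0|^3]=(12/\mathrm{e}-2)\theta_0^3$, which I would bound by $2.41456\,\theta_0^3$ to produce the strict inequality. Since $k''(\theta_0)=2\theta_0^{-3}$ and $\EE[(\bar{X}-\theta_0)^2]=\theta_0^2/n$, the $k''$-term of (\ref{noncanexponential}) evaluates exactly to $2$. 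Together the first three terms contribute $2+(12/\mathrm{e}-2)+2=2+12/\mathrm{e}$.

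The substantive step is the fourth ($M$-)term. From $\ell(\theta;\boldsymbol{x})=-n\log\theta-\theta^{-1}\sum_i x_i$ I would compute $\ell^{(3)}(\theta;\boldsymbol{x})=6\theta^{-4}\sum_i x_i-2n\theta^{-3}$ and take the dominating function $M(\theta;\boldsymbol{x})=6\theta^{-4}\sum_i x_i+2n\theta^{-3}$, which for each fixed $\boldsymbol{x}$ is strictly decreasing in $\theta>0$ and so meets the monotonicity requirement of (R.C.4''). The convenient feature of this choice is that $M(\hat{\theta}_n;\boldsymbol{X})=6n\bar{X}/\bar{X}^4+2n/\bar{X}^3=8n/\bar{X}^3$ collapses to a single term. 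The two expectations are then evaluated using $\bar{X}\sim\mathrm{G}(n,n/\theta_0)$: the term with $M(\theta_0;\boldsymbol{X})$ requires the positive moments $\EE[\bar{X}],\EE[\bar{X}^2],\EE[\bar{X}^3]$ and yields $(8n+12)/(n\theta_0)$, while the term with $M(\hat{\theta}_n;\boldsymbol{X})=8n\bar{X}^{-3}$ requires the negative moments $\EE[\bar{X}^{-1}],\EE[\bar{X}^{-2}],\EE[\bar{X}^{-3}]$ and, after the simplification $n(n-2)(n-3)-2n^2(n-3)+n^3=n(n+6)$, yields $8n^2(n+6)/((n-1)(n-2)(n-3)\theta_0)$. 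Multiplying the sum by the prefactor $\tfrac{1}{2}/\sqrt{i(\theta_0)}=\theta_0/2$ gives the fourth term as $(4n+6)/n+4n^2(n+6)/((n-1)(n-2)(n-3))$.

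Finally I would assemble the four contributions inside the brackets of (\ref{noncanexponential}) and divide by $\sqrt{n}$: the constant $2+12/\mathrm{e}$ together with the $4$ coming from $(4n+6)/n$ combines, after the bound $12/\mathrm{e}-2<2.41456$, to $10.41456$; the residual $6/n$ piece of $(4n+6)/n$ becomes $6/n^{3/2}$; and the remaining term becomes $4n^{3/2}(n+6)/((n-1)(n-2)(n-3))$, giving exactly (\ref{boundnoncanexponential3}). The main obstacle lies entirely in the $M$-term: one must choose a dominating function that is simultaneously monotonic in $\theta$ and simple enough that $M(\hat{\theta}_n;\boldsymbol{X})$ has a tractable closed form, and then carry out the negative-moment computation for $\bar{X}$. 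The requirement $\EE[\bar{X}^{-3}]<\infty$ is precisely what forces the restriction $n>3$ and produces the $(n-1)(n-2)(n-3)$ denominator; the same finiteness also verifies the integrability part of (R.C.4''), while checking (R1)--(R3) is routine.
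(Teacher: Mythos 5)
Your proposal is correct and follows essentially the same route as the paper's own proof: the same specialisation of the bound (\ref{noncanexponential}), the identical dominating function (your $6\theta^{-4}\sum_i x_i + 2n\theta^{-3}$ is exactly the paper's $\frac{2n}{\theta^3}\big(1+\frac{3\bar{x}}{\theta}\big)$, with the same collapse $M(\hat{\theta}_n;\boldsymbol{X})=8n/\bar{X}^3$), and the same gamma/inverse-gamma moment computations for $\bar{X}\sim\mathrm{G}(n,n/\theta_0)$ leading to the same three pieces of the final bound. The only cosmetic difference is that you carry the exact constant $12/\mathrm{e}-2$ through to the end before rounding, whereas the paper bounds it by $2.41456$ at the outset.
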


\begin{remark}
\label{remarknoncan}
\textbf{(1)} This example is given for purely illustrative purposes, as an improved bound can be obtained directly by Stein's method.   Define $S = \frac{\sqrt{n}(\bar{X} - \theta_0)}{\theta_0} = \frac{1}{\sqrt{n}}\sum_{i=1}^{n} Y_i$, where $Y_i = \frac{X_i - \theta_0}{\theta_0}$ are i.i.d$.$ zero mean and unit variance random variables.  Therefore, by Theorem \ref{thmapw},
\begin{equation}
\label{boundf}
d_{\mathrm{W}}(W,Z) \leq \frac{1}{\sqrt{n}}\left(2 + \frac{1}{\theta_0^3}{\EE}[|X_1 - \theta_0|^3]\right) < \frac{4.41456}{\sqrt{n}}.
\end{equation}
%The upper bound given in \eqref{boundf} as a result of the direct use of Stein's method is smaller than the upper bound given in \eqref{boundnoncanexponential3} using the general method from Section \ref{sec:bounded_Wass_bounds}. 
However, in order to apply Stein's method directly, we require the quantity $W=\sqrt{n\:i(\theta_0)}(\hat{\theta}_n(\boldsymbol{x})- \theta_0)$ to be a sum of independent random variables. The general theorems obtained in this paper are, however, applicable whatever the form of the MLE is, as long as the regularity conditions are met. \\
\textbf{(2)} Like the bound of Corollary \ref{corollaryexponential} for the exponential distribution under canonical parametrisation, the bound (\ref{boundnoncanexponential3}) of Corollary \ref{Corollarynoncanexp} is of order $\mathcal{O}(n^{-1/2})$ and does not depend on $\theta_0$.  These features are shared by the bound (\ref{boundf}) obtained by a direct application of Stein's method.  A bound with these features was also obtained by \cite{ar15} in the weaker bounded Wasserstein metric.  Despite being given in a stronger metric, our bound has numerical constants that are an order of magnitude smaller.
\end{remark}

\begin{proof} It is straightforward to show that $\hat{\theta}_n(\boldsymbol{X}) = \bar{X}$ and that the conditions (R1)--(R3), (R.C.4'') are satisfied for this specific example. The log-likelihood function is 
\begin{align}
\nonumber  \ell(\theta_0;\boldsymbol{x}) = -nA(\theta_0) + k(\theta_0)\sum_{i=1}^{n}T(x_i) = -n\bigg(\log\theta_0 +\frac{\bar{x}}{\theta_0}\bigg).
\end{align}
We have that
\begin{equation*}
|\ell^{(3)}(\theta;\boldsymbol{x})| = n\left|\frac{2}{\theta^3} - \frac{6\bar{x}}{\theta^4}\right| \leq \frac{2n}{\theta^3}\left|1 + \frac{3\bar{x}}{\theta}\right|,
\end{equation*}
which is a decreasing function with respect to $\theta$, and therefore condition (R.C.4'') is satisfied with $M(\theta;\boldsymbol{x}) = \frac{2n}{\theta^3}\left|1 + \frac{3\bar{x}}{\theta}\right|$. Basic calculations of integrals show that $\EE[|T(X_1)-D(\theta_0)|^3] = \EE[\left|\theta_0 - X_1\right|^3] < 2.41456\theta_0^3$.  In addition, since $T(x)=x$, we have that ${\rm Var}(T(X_1)) = {\rm Var}(X_1) = \theta_0^2$ and therefore for the first term of the upper bound in \eqref{noncanexponential}, we have that
\begin{align}
\label{upperbound_first_term_noncan_exponential}
\frac{1}{\sqrt{n}}\left(2 + \frac{{\rm E}[|T(X_1)-D(\theta_0)|^3]}{\left[{\rm Var}(T(X_1))\right]^{3/2}}\right) < \frac{4.41456}{\sqrt{n}}.
\end{align}
Now, consider the second term.  The quantity $\EE[(\bar{X} - \theta_0)^2]$ is calculated using the results in p$.$ 73 and the equations (3.38), p$.$ 70 of \cite{DistributionTheory} along with the fact that $\hat{\theta}_n(\boldsymbol{X}) = \bar{X} \sim  \mathrm{G}\big(n, \frac{n}{\theta_0}\big)$. We obtain that $\EE[(\bar{X} - \theta_0)^2] =\frac{\theta_0^2}{n}$.  We also have that $i(\theta_0) =\frac{1}{\theta_0^2}$, and therefore
\begin{equation}
\label{upperbound_second_term_noncan_exponential}
\frac{|k''(\theta_0)|}{\sqrt{i(\theta_0)}}\sqrt{{\rm Var}\left(T(X_1)\right)}\sqrt{\EE\big[\big(\hat{\theta}_n(\boldsymbol{X})- \theta_0\big)^2\big]} = \frac{2}{\sqrt{n}}.
\end{equation}
Finally, we work on the third term. Since $\bar{X} \sim {\rm G}\big(n,\frac{n}{\theta_0}\big)$ and $\frac{1}{\bar{X}} \sim {\rm Inv. G}\big(n,\frac{n}{\theta_0}\big)$ (where ${\rm Inv. G}$ denotes the inverse gamma distribution), we have that
\begin{align}
\label{M_for_theta0_noncan}
\nonumber  \EE\big|(\hat{\theta}_n(\boldsymbol{X}) - \theta_0)^2M(\theta_0;\boldsymbol{X})\big| &= \frac{2n}{\theta_0^4}\EE\big[\left(\bar{X} - \theta_0\right)^2\left(3\bar{X} + \theta_0\right)\big]\\
\nonumber &  = \frac{2n}{\theta_0^4}\left\lbrace 3\EE[\bar{X}^3] - 5\theta_0\EE[\bar{X}^2] + \theta_0^2\EE[\bar{X}] + \theta_0^3\right\rbrace\\
\nonumber &  = \frac{2n}{\theta_0^4}\left\lbrace \frac{3n(n+1)(n+2)\theta_0^3}{n^3} - \frac{5n(n+1)\theta_0^3}{n^2} + 2\theta_0^3\right\rbrace\\
&  = \frac{4(2n+3)}{n\theta_0}
\end{align}
and, for $n>3$,
\begin{align}
\label{M_for_thetahat_noncan}
\nonumber  \EE\big|(\hat{\theta}_n(\boldsymbol{X}) - \theta_0)^2M(\hat{\theta}_n(\boldsymbol{X});\boldsymbol{X})\big| &=  8n\EE\left[\frac{\left(\bar{X} - \theta_0\right)^2}{\bar{X}^3}\right] = 8n\EE\left[\frac{1}{\bar{X}} + \frac{\theta_0^2}{\bar{X}^3} - \frac{2\theta_0}{\bar{X}^2}\right]\\
\nonumber &  = \frac{8n}{n-1}\left(\frac{n}{\theta_0} + \frac{n^3}{(n-2)(n-3)\theta_0} - \frac{2n^2}{(n-2)\theta_0}\right)\\
&  = \frac{8n^2(n+6)}{(n-1)(n-2)(n-3)\theta_0}.
\end{align}
Applying the results of \eqref{upperbound_first_term_noncan_exponential}, \eqref{upperbound_second_term_noncan_exponential}, \eqref{M_for_theta0_noncan} and \eqref{M_for_thetahat_noncan} to \eqref{noncanexponential}, and using that $i(\theta_0) = \frac{1}{\theta_0^2}$, yields the desired bound. 
\end{proof}

%\subsection{\gr{Exponential distribution: a 2-Wasserstein distance bound in the canonical case}}\label{ap2was}

\subsection{Further calculations from the proof of Corollary \ref{thmnorcan}}\label{appnor4}

\noindent{\emph{Proof of Lemma \ref{momentlem}.}} Let us first note the standard result that $\bar{X}$ and $\sum_{i=1}^{n}\left(X_i - \bar{X}\right)^2$ are independent, which follows from Basu's theorem.  We also have that $\bar{X}\sim \mathrm{N}(\mu,\frac{\sigma^2}{n})$ and $\frac{1}{\sigma^2}\sum_{i=1}^{n}\left(X_i - \bar{X}\right)^2\sim \chi_{(n-1)}^2$, the chi-square distribution with $n-1$ degrees of freedom.  We therefore have that $\hat{\eta}_1=_d \frac{n}{2\sigma^2}V$ and $\hat{\eta}_2=_d\frac{n}{\sigma^2}UV$, where $U\sim \mathrm{N}(\mu,\frac{\sigma^2}{n})$ and $V\sim\mathrm{Inv-}\chi_{(n-1)}^2$ are independent.  All expectations as given in the lemma can therefore be computed exactly using the formulas
%Let us first note the standard result that $\bar{X}$ and $\sum_{i=1}^{n}\left(X_i - \bar{X}\right)^2$ are independent, which follows from Basu's theorem.  We also have that $\bar{X}\sim N(\mu,\frac{\sigma^2}{n})$ and $\frac{1}{\sigma^2}\sum_{i=1}^{n}\left(X_i - \bar{X}\right)^2\sim \chi_{(n-1)}^2$, the chi-square distribution with $n-1$ degrees of freedom.  In terms of the canonical parametrisation $(\eta_1,\eta_2)=(\frac{1}{2\sigma^2},\frac{\mu}{\sigma^2})$, we therefore have that $\hat{\eta}_1=_d\eta_1nV$ and $\hat{\eta}_2=_d2\eta_1nUV$, where $U\sim N(\frac{\eta_2}{2\eta_1},\frac{1}{2\eta_1n})$ and $V\sim\mathrm{Inv-}\chi_{(n-1)}^2$ are independent.  All expectations as given in the lemma can therefore be computed exactly using the formulas
\begin{align*}&\E[U]=\mu,\quad \E[U^2]=\mu^2+\frac{\sigma^2}{n}, \quad \E[U^3]=\mu^3+\frac{3\mu\sigma^2}{n},\quad \E[U^4]=\mu^4+\frac{6\mu^2\sigma^2}{n}+\frac{3\sigma^4}{n^2},\\
&\E[V^k]=\frac{1}{(n-3)(n-5)\cdots(n-2k-1)}, \quad k=1,2,\ldots,\quad n>2k+1, \\
&\E[V^{-k}]=(n-1)(n+1)\cdots(n+2k-3), \quad k=1,2,\ldots,\quad n>1,
\end{align*}
and then expressing the resulting expression in terms of the canonical parametrisation $(\eta_1,\eta_2)=(\frac{1}{2\sigma^2},\frac{\mu}{\sigma^2})$.  (Here the expectations $\E[V^k]$ and $\E[V^{-k}]$, follow from the standard formula that, for $Y\sim\chi_{(r)}^2$, $\E[Y^m]=2^m\frac{\Gamma(m+r/2)}{\Gamma(r/2)}$, $r>0$, $m>-\frac{r}{2}$ and the identity $\Gamma(x+1)=x\Gamma(x)$.) As an example,
\begin{align*}\E[Q_1^2]=\E[\hat{\eta}_1^2]-2\eta_1\E[\hat{\eta}_1]+\eta_1^2=\frac{\eta_1^2n^2}{(n-3)(n-5)}-\frac{2\eta_1^2n}{n-3}+\eta_1^2=\frac{\eta_1^2(2n+15)}{(n-3)(n-5)}.
\end{align*}
To obtain the compact bound for $\E[Q_1^2]$ as stated in the lemma, we note that $f(n):=\frac{n(2n+15)}{(n-3)(n-5)}$ is a decreasing function of $n$ for $n>9$ with $f(10)=10$.  Similar calculations show that, for $n>9$,
\begin{align*}\E[Q_2^2]&=\frac{2\eta_1n+(2n+15)\eta_2}{(n-3)(n-5)}, \quad
\E[Q_1^4]=\frac{\eta_1^4(12n^2+516n+945)}{(n-3)(n-5)(n-7)(n-9)}, \\
\E[Q_2^4]&=\frac{12n^2\eta_1^2+12n(2n+63)\eta_1\eta_2^2+3(4n^2+172n+315)\eta_2^4}{(n-3)(n-5)(n-7)(n-9)}, 
\end{align*}
and, by the Cauchy-Schwarz inequality, $\E[Q_1^2Q_2^2]\leq\sqrt{\E[Q_1^4]\E[Q_2^4]}$.  We also have that, for $n>9$,
\begin{align*}&\E[\hat{\eta}_1^{-8}]=\frac{(n-1)(n+1)(n+3)\cdots(n+13)}{\eta_1^8n^8},\quad
\E[\hat{\eta}_1^{-6}]=\frac{(n-1)(n+1)(n+3)\cdots(n+9)}{\eta_1^6n^6},\\
&\E[\hat{\eta}_1^{-4}]=\frac{(n-1)(n+1)(n+3)(n+5)}{\eta_1^4n^4},\E[\hat{\eta}_2^{2}]=\frac{n^2}{(n-3)(n-5)}\bigg(\eta_2^2+\frac{2\eta_1}{n}\bigg), \\
&\E[\hat{\eta}_2^{4}]=\frac{n^4}{(n-3)(n-5)(n-7)(n-9)}\bigg(\eta_2^4+\frac{12\eta_1\eta_2^2}{n}+\frac{12\eta_1^2}{n^2}\bigg),\\
&\E\bigg[\frac{\hat{\eta}_2}{\hat{\eta}_1^3}\bigg]=\frac{(n-1)(n+1)\eta_2}{n^2\eta_1^3}, \quad
\E\bigg[\frac{\hat{\eta}_2^2}{\hat{\eta}_1^6}\bigg]=\frac{(n-1)(n+1)(n+3)(n+5)}{\eta_1^6n^4}\bigg(\eta_2^2+\frac{2\eta_1}{n}\bigg), \\
&\E\bigg[\frac{\hat{\eta}_2^4}{\hat{\eta}_1^8}\bigg]=\frac{(n-1)(n+1)(n+3)(n+5)}{\eta_1^8n^4}\bigg(\eta_2^4+12\frac{\eta_1\eta_2^2}{n}+\frac{12\eta_1^2}{n^2}\bigg).
\end{align*}
From these formulas we are able to obtain compacts bounds for all expectations given in the lemma, that are valid for $n\geq10$, using a similar argument to the one we used to bound $\E[Q_1^2]$.  We round up all numerical constants to the nearest integer  We further simplify the bounds for $\E[Q_2^4]$, $\E[\hat{\eta}_2^4]$ and $\E[\hat{\eta}_2^4/\hat{\eta}_1^8]$ using the inequality $ab\leq \frac{1}{2}(a^2+b^2)$.    \hfill $\Box$

\vspace{3mm}

\noindent{\emph{Bounding the terms $R_{2,1,1}^{M_{211}}$, $R_{1,1,2}^{M_{112}}$, $R_{2,1,2}^{M_{212}}$ and $R_{1,2,2}^{M_{122}}$.}} 

\vspace{2mm}

\noindent{$R_{2,1,1}^{M_{211}}$}:
\begin{align*}R_{2,1,1}^{M_{211},A}=\E\bigg[Q_1^2\frac{n|\eta_2|}{\eta_1^3}\bigg]\leq\frac{10|\eta_2|}{\eta_1},
\end{align*}
\begin{align*}R_{2,1,1}^{M_{211},B}&=\E\bigg[Q_1^2\frac{n|\eta_2|}{\hat{\eta}_1^3}\bigg]\leq n|\eta_2|\sqrt{\E[Q_1^4]\E[\hat{\eta}_1^{-6}]}<|\eta_2|\sqrt{6958\eta_1^4\cdot\frac{7}{\eta_1^6}}<\frac{221|\eta_2|}{\eta_1},
\end{align*}
 \begin{align*}R_{2,1,1}^{M_{211},C}&=\E\bigg[Q_1^2\frac{n|\hat{\eta}_2|}{\eta_1^3}\bigg]\leq \frac{n}{\eta_1^3}\sqrt{\E[Q_1^4]\E[\hat{\eta}_2^{2}]}<\frac{1}{\eta_1^3}\sqrt{6958\eta_1^4(\eta_1+3\eta_2^2)}<\frac{84}{\sqrt{\eta_1}}+\frac{145|\eta_2|}{\eta_1},
\end{align*}
\begin{align*}R_{2,1,1}^{M_{211},D}&=\E\bigg[Q_1^2\frac{n|\hat{\eta}_2|}{\hat{\eta}_1^3}\bigg]\leq n\sqrt{\E[Q_1^4]\E\bigg[\frac{\hat{\eta}_2^{2}}{\hat{\eta}_1^6}\bigg]}<\sqrt{6958\eta_1^4\cdot\frac{1}{\eta_1^6}(\eta_1+2\eta_2^2)}<\frac{84}{\sqrt{\eta_1}}+\frac{118|\eta_2|}{\eta_1}.
\end{align*}

\vspace{2mm}

\noindent{$R_{1,1,2}^{M_{112}}$}:
\begin{align*}R_{1,1,2}^{M_{112},A}&=\E\bigg|Q_1Q_2\frac{n|\eta_2|}{\eta_1^3}\bigg|\leq\frac{n|\eta_2|}{\eta_1^3}\sqrt{\E[Q_1^2]\E[Q_2^2]}<\frac{n|\eta_2|}{\eta_1^3}\sqrt{10\eta_1^2(6\eta_1+10\eta_2^2)}\\
&<\frac{8|\eta_2|}{\eta_1^{3/2}}+\frac{10\eta_2^2}{\eta_1^2}\leq\frac{4}{\eta_1}+\frac{14\eta_2^2}{\eta_1^2},
\end{align*}
\begin{align*}R_{1,1,2}^{M_{112},B}&=\E\bigg|Q_1Q_2\frac{n|\eta_2|}{\hat{\eta}_1^3}\bigg|\leq n|\eta_2|\sqrt{\E[Q_1^2Q_2^2]\E[\hat{\eta}_1^{-6}]}<n|\eta_2|\sqrt{\eta_1^2(6400\eta_1+9023\eta_2^2)\cdot\frac{7}{\eta_1^6}}\\
&<\frac{212|\eta_2|}{\eta_1^{3/2}}+\frac{252\eta_2^2}{\eta_1^2}\leq\frac{106}{\eta_1}+\frac{358\eta_2^2}{\eta_1^2},
\end{align*}
\begin{align*}R_{1,1,2}^{M_{112},C}&=\E\bigg|Q_1Q_2\frac{n|\hat{\eta}_2|}{\eta_1^3}\bigg|\leq \frac{n}{\eta_1^3}\sqrt{\E[Q_1^2Q_2^2]\E[\hat{\eta}_2^{2}]}<\frac{1}{\eta_1^3}\sqrt{\eta_1^2(6400\eta_1+9023\eta_2^2)(\eta_1+3\eta_2^2)}\\
&\leq\frac{1}{\eta_1^3}\sqrt{\frac{41023}{2}\eta_1^2+\frac{82361}{2}\eta_2^4}<\frac{144}{\eta_1}+\frac{203\eta_2^2}{\eta_1^2},
\end{align*}
\begin{align*}R_{1,1,2}^{M_{112},D}&=\E\bigg|Q_1Q_2\frac{n|\hat{\eta}_2|}{\hat{\eta}_1^3}\bigg|\leq n\sqrt{\E[Q_1^2Q_2^2]\E\bigg[\frac{\hat{\eta}_2^{2}}{\hat{\eta}_1^6}\bigg]}<\frac{1}{\eta_1^3}\sqrt{\eta_1^2(6400\eta_1+9023\eta_2^2)\cdot\frac{1}{\eta_1^6}(\eta_1+2\eta_2^2)}\\
&\leq\frac{1}{\eta_1^3}\sqrt{\frac{34623}{2}\eta_1^2+\frac{57915}{2}\eta_2^4}<\frac{132}{\eta_1}+\frac{171\eta_2^2}{\eta_1^2}.
\end{align*}

\vspace{2mm}

\noindent{$R_{2,1,2}^{M_{212}}$}: 
\begin{align*}R_{2,1,2}^{M_{212},A}=R_{2,1,2}^{M_{212},C}&=\E\bigg[Q_1Q_2\frac{n}{2\eta_1^2}\bigg]\leq\frac{n}{2\eta_1^2}\sqrt{\E[Q_1^2]\E[Q_2^2]}\\
&<\frac{n}{2\eta_1^2}\sqrt{10\eta_1^2(6\eta_1+10\eta_2^2)}<\frac{4}{\sqrt{\eta_1}}+\frac{5|\eta_2|}{\eta_1},
\end{align*}
\begin{align*}R_{2,1,2}^{M_{212},B}=R_{2,1,2}^{M_{212},D}&=\E\bigg[Q_1Q_2\frac{n}{2\hat{\eta}_1^2}\bigg]\leq\frac{n}{2}\sqrt{\E[Q_1^2Q_2^2]\E[\hat{\eta}_1^{-4}]}\\
&<\frac{n}{2}\sqrt{\eta_1^2(6400\eta_1+9023\eta_2^2)\times\frac{2}{\eta_1^4}}<\frac{57}{\sqrt{\eta_1}}+\frac{68|\eta_2|}{\eta_1}.
\end{align*}

\vspace{2mm}

\noindent{$R_{1,2,2}^{M_{122}}$}: 
\begin{align*}R_{1,2,2}^{M_{122},A}=R_{1,2,2}^{M_{122},C}=\E\bigg[Q_2^2\frac{n}{2\eta_1^2}\bigg]<\frac{3}{\eta_1}+\frac{5\eta_2^2}{\eta_1^2},
\end{align*}
\begin{align*}R_{1,2,2}^{M_{122},B}=R_{1,2,2}^{M_{122},D}&=\E\bigg[Q_2^2\frac{n}{2\hat{\eta}_1^2}\bigg]\leq\frac{n}{2}\sqrt{\E[Q_2^4]\E[\hat{\eta}_1^{-4}]}\\
&<\frac{1}{2}\sqrt{(5886\eta_1^2+11700\eta_2^4)\cdot\frac{2}{\eta_1^4}}<\frac{55}{\eta_1}+\frac{77\eta_2^2}{\eta_1^2}.
\end{align*}

\subsection{Proof of Theorem \ref{thmnondiag}}\label{appnorgen}

\begin{proof}Let $\widetilde{\boldsymbol{W}}=\sqrt{n}(\hat{\boldsymbol{\theta}}_n(\boldsymbol{X})-\boldsymbol{\theta}_0)$, so that $\boldsymbol{W}=[I(\boldsymbol{\theta}_0)]^{1/2}\widetilde{\boldsymbol{W}}$.  Now, for $1\leq i\leq n$, write $\boldsymbol{X}_i=(X_{i,1},\ldots,X_{i,p})^{\intercal}$, and define the centered random variables $Y_{i,j}=X_{i,j}-\mu_j$, $1\leq i\leq n$, $1\leq j\leq p$.  For $1\leq j\leq p$, let $\bar{X}_j$ and $\bar{Y}_j$ denote the sample means of $X_{1,j},\ldots,X_{n,j}$ and $Y_{1,j},\ldots,Y_{n,j}$. A simple calculation gives the useful equation
\[\sum_{i=1}^n(X_{i,j}-\bar{X}_j)^2=\sum_{i=1}^n(X_{i,j}-\mu_{j})^2-n(\bar{X}_j-\mu_j)^2.\]
Putting all this together gives that $\widetilde{\boldsymbol{W}}$ can be written as $\widetilde{\boldsymbol{W}}=(\widetilde{W}_{1},\ldots,\widetilde{W}_{p(p+3)/2})^{\intercal}$, where, for $1\leq j\leq p$,
\begin{align*}\widetilde{W}_{j}=\frac{1}{\sqrt{n}}\sum_{i=1}^n X_{i,j}-\mu_j=\frac{1}{\sqrt{n}}\sum_{i=1}^n Y_{i,j},
\end{align*}
and, for $p+1\leq j\leq p(p+3)/2$, we associate $\widetilde{W}_j$ with an ordering of the random variables $\widetilde{W}_{k,\ell}$ which are given, for $1\leq \ell\leq k\leq p$, by
\begin{align*}\widetilde{W}_{k,\ell}&=\frac{1}{\sqrt{n}}\sum_{i=1}^n((X_{i,k}-\mu_k)(X_{i,\ell}-\mu_\ell)-\sigma_{k,\ell})-\sqrt{n}(\bar{X}_k-\mu_k)(\bar{X}_\ell-\mu_\ell) \\
&=\frac{1}{\sqrt{n}}\sum_{i=1}^n(Y_{i,k}Y_{i,\ell}-\sigma_{k,\ell})-\sqrt{n}\bar{Y}_{k}\bar{Y}_\ell.
\end{align*}
Now define $\widetilde{\boldsymbol{V}}=(\widetilde{V}_{1},\ldots,\widetilde{V}_{p(p+3)/2})^{\intercal}$, where, for $1\leq j\leq p$ and $1\leq \ell\leq k\leq p$, 
\[\widetilde{V}_{j}=\widetilde{W}_{j} \quad \text{and} \quad \widetilde{V}_{k,\ell}=\frac{1}{\sqrt{n}}\sum_{i=1}^n(Y_{i,k}Y_{i,\ell}-\sigma_{k,\ell}),\]
(here we associate $\widetilde{V}_j$, $p+1\leq j\leq p(p+3)/2$, with an ordering of $\widetilde{V}_{k,\ell}$, $1\leq\ell\leq k\leq p$) and let $\boldsymbol{V}=[I(\boldsymbol{\theta}_0)]^{1/2}\widetilde{\boldsymbol{V}}$.

Let $h\in\mathcal{H}_{\mathrm{W}}$.  Then
\begin{align}\mathbb{E}[h(\boldsymbol{W})]-\mathbb{E}[h(\boldsymbol{Z})]&= \big(\mathbb{E}[h(\boldsymbol{W})]-\mathbb{E}[h(\boldsymbol{V})]\big)+\big(\mathbb{E}[h(\boldsymbol{V})]-\mathbb{E}[h(\boldsymbol{Z})]\big)\nonumber\\
\label{r1r2}&=:R_1+R_2.
\end{align}
%\begin{align*}|\mathbb{E}[h(\boldsymbol{W})]-\mathbb{E}[h(\boldsymbol{Z})]|&\leq |\mathbb{E}[h(\boldsymbol{W})]-\mathbb{E}[h(\boldsymbol{V})]|+|\mathbb{E}[h(\boldsymbol{V})]-\mathbb{E}[h(\boldsymbol{Z})]|\\
%&=:R_1+R_2.
%\end{align*}
Now write $[I(\boldsymbol{\theta}_0)]^{1/2}=(a_{i,j})$. The remainder $R_1$ is readily bounded by applying the mean value theorem:
\begin{align*}|R_1|&\leq \|h\|_{\mathrm{Lip}}\mathbb{E}\bigg|\sum_{1\leq k\leq j\leq p}\sum_{1\leq r\leq q\leq p}a_{(j,k),(q,r)}\sqrt{n}\bar{Y}_q\bar{Y}_r\bigg|\\
&\leq\sum_{1\leq k\leq j\leq p}\sum_{1\leq r\leq q\leq p}\sqrt{n}\|[I(\boldsymbol{\theta}_0)]^{1/2}\|_{\mathrm{max}}\max_{1\leq t\leq p}\mathbb{E}[(\bar{Y}_t)^2]\\
&<\frac{p^4\sigma_*^2\|[I(\boldsymbol{\theta}_0)]^{1/2}\|_{\mathrm{max}}}{\sqrt{n}},
\end{align*}
where in the second step we used the triangle inequality and the Cauchy-Schwarz inequality, and that $\|h\|_{\mathrm{Lip}}\leq1$, since $h\in\mathcal{H}_{\mathrm{W}}$.

Now we bound $R_2$.  We can write $\boldsymbol{V}=\frac{1}{\sqrt{n}}\sum_{i=1}^n \boldsymbol{\xi}_i$, where $\boldsymbol{\xi}_1,\ldots,\boldsymbol{\xi}_n$ are i.i.d$.$ random vectors, and $\widetilde{\boldsymbol{V}}=\frac{1}{\sqrt{n}}\sum_{i=1}^n \tilde{\boldsymbol{\xi}}_i$, where $\tilde{\boldsymbol{\xi}}_1,\ldots,\tilde{\boldsymbol{\xi}}_n$ are i.i.d$.$ random vectors with $\boldsymbol{\xi}_i=[I(\boldsymbol{\theta}_0)]^{1/2}\tilde{\boldsymbol{\xi}}_i$.  Here the components of $\tilde{\boldsymbol{\xi}}_1$ are given by $\tilde{\boldsymbol{\xi}}_{1,j}=Y_{1,j}$, $1\leq j\leq p$, and $\tilde{\boldsymbol{\xi}}_{1,(k,\ell)}=Y_{1,k}Y_{1,\ell}-\sigma_{k,\ell}$, $1\leq \ell\leq k\leq p$, where, for $d+1\leq j\leq p(p+3)/2$ we associate $\widetilde{\boldsymbol{\xi}}_{1,j}$ with an ordering of $\widetilde{\boldsymbol{\xi}}_{1,(k,\ell)}$, $1\leq \ell\leq k\leq p$.
We begin by showing that the assumptions of Theorem \ref{bonisthm} are met, that is $\mathbb{E}[\boldsymbol{\xi}_1] = 0$ and $\mathbb{E}[\boldsymbol{\xi}_1\boldsymbol{\xi}_1^{\intercal}] =I_{p(p+3)/2}$.  The components of $\tilde{\boldsymbol{\xi}}_1$ are given by $\tilde{\boldsymbol{\xi}}_{1,j}=Y_{1,j}$ and $\tilde{\boldsymbol{\xi}}_{(k,\ell),1}=Y_{1,k}Y_{1,\ell}-\sigma_{k,\ell}$.  We can immediately see that $\mathbb{E}[\boldsymbol{\xi}_1]=[I(\boldsymbol{\theta}_0)]^{1/2}\mathbb{E}[\tilde{\boldsymbol{\xi}}_1]= \boldsymbol{0}$.  Let us now show that $\mathbb{E}[\boldsymbol{\xi}_1\boldsymbol{\xi}_1^{\intercal}] =I_{p(p+3)/2}$.  As the MLE is asymptotically multivariate normally distributed we have that $\boldsymbol{W} \stackrel{d}{\rightarrow}\boldsymbol{Z}$, as $n\rightarrow\infty$ (with an abuse of notation, as we have not indexed $\boldsymbol{W}$ with $n$).  We have just shown that $R_1\rightarrow0$, as $n\rightarrow\infty$, (again with the same abuse of notation) for all $h\in\mathcal{H}_1$. Therefore by (\ref{r1r2}) we have that $\boldsymbol{V} \stackrel{d}{\rightarrow}\boldsymbol{Z}$, as $n\rightarrow\infty$.  Therefore $\mathbb{E}[\boldsymbol{V}\boldsymbol{V}^{\intercal}] =I_{p(p+3)/2}+o(1)$, as $n\rightarrow\infty$.  But since $\boldsymbol{\xi}_1,\ldots,\boldsymbol{\xi}_n$ are i.i.d$.$ we have that $\mathbb{E}[\boldsymbol{\xi}_1\boldsymbol{\xi}_1^{\intercal}]=\mathbb{E}[\boldsymbol{V}\boldsymbol{V}^{\intercal}]$.  Since $\mathbb{E}[\boldsymbol{\xi}_1\boldsymbol{\xi}_1^{\intercal}]$ does not involve $n$, we deduce that $\mathbb{E}[\boldsymbol{\xi}_1\boldsymbol{\xi}_1^{\intercal}]=I_{p(p+3)/2}$.  

Now we obtain the bound
\begin{align*}\mathbb{E}[([I(\boldsymbol{\theta}_0)]^{1/2}\tilde{\xi}_{1,j})^4]&=\mathbb{E}\bigg[\bigg(\sum_{q=1}^{p(p+3)/2}a_{j,q}\tilde{\xi}_{1,q}\bigg)^4\bigg] \\
&\leq \frac{p^4(p+3)^4}{16}\|[I(\boldsymbol{\theta}_0)]^{1/2}\|_{\mathrm{max}}^4\cdot\mathrm{max}_{1\leq t\leq p(p+3)/2}\mathbb{E}[\tilde{\xi}_{1,t}^4] \\
&=\frac{p^4(p+3)^4}{16}\|[I(\boldsymbol{\theta}_0)]^{1/2}\|_{\mathrm{max}}^4\cdot105\sigma_*^8.
\end{align*}
As the assumptions of Theorem \ref{bonisthm} are satisfied, we may apply inequality (\ref{bonisbound2}) to obtain the bound
\begin{align*}R_2&\leq\frac{14 (p(p+3)/2)^{5/4}}{\sqrt{n}}\bigg(\frac{p^4(p+3)^4}{16}\|[I(\boldsymbol{\theta}_0)]^{1/2}\|_{\mathrm{max}}^4\cdot105\sigma_*^8\bigg)^{1/2}\\
&< \frac{15.1}{\sqrt{n}}p^{13/4}(p+3)^{13/4}\sigma_*^4\|[I(\boldsymbol{\theta}_0)]^{1/2}\|_{\mathrm{max}}^2.
\end{align*}
Finally, combining the bounds for $R_1$ and $R_2$ gives the bound for $d_{\mathrm{W}}(\boldsymbol{W},\boldsymbol{Z})$ as stated in the theorem.
\end{proof}
\subsection*{Acknowledgements}
 AA would like to thank the Department of Mathematics, The University of Manchester for the kind hospitality, where work on this project began.  RG is supported by a Dame Kathleen Ollerenshaw Research Fellowship.  We are very grateful to Thomas Bonis for valuable discussions concerning the results from his paper \cite{bonis} and for working out an explicit bound on the constant in one of the main quantitative limit theorems from his paper that we used in our paper. \gr{We would like to thank the referees for their helpful comments and suggestions that have enabled us to greatly improve our paper. In particular, we are very grateful to one of the referees  for their insightful comments and explanations, which enabled us to obtain $p$-Wasserstein analogues of the 1-Wasserstein distance bounds given in the original submission.}   

\footnotesize

\end{document}